\newcounter{alph}
\newtheorem{theo}[alph]{Theorem}
\numberwithin{equation}{section}
\newtheorem{cor}[equation]{Corollary}
\newtheorem{lem}[equation]{Lemma}
\newtheorem{prop}[equation]{Proposition}
\newtheorem{thm}[equation]{Theorem}
\theoremstyle{definition}
\newtheorem{exa}[equation]{Example}
\newtheorem{que}[equation]{Question}
\newtheorem{rem}[equation]{Remark}
\def\C{\mathbb C}
\def\H{\mathbb H}
\def\N{\mathbb N}
\def\R{\mathbb R}
\def\O{\mathbb O}
\def\ve{\varepsilon}
\def\vf{\varphi}
\def\la{\langle}
\def\ra{\rangle}
\newcommand{\ad}{\operatorname{ad}}
\newcommand{\ess}{\operatorname{ess}}
\newcommand{\diam}{\operatorname{diam}}
\newcommand{\dive}{\operatorname{div}}
\newcommand{\dv}{\operatorname{dv}}
\newcommand{\id}{\operatorname{id}}
\newcommand{\grad}{\operatorname{grad}}
\newcommand{\supp}{\operatorname{supp}}
\begin{document}


\title{Bottom of spectra and coverings}
\author{Werner Ballmann}
\address
{WB: Max Planck Institute for Mathematics,
Vivatsgasse 7, 53111 Bonn}
\email{hwbllmnn\@@mpim-bonn.mpg.de}
\author{Panagiotis Polymerakis}
\address{PP: Max Planck Institute for Mathematics,
Vivatsgasse 7, 53111 Bonn}
\email{polymerp\@@mpim-bonn.mpg.de}


\thanks{\emph{Acknowledgments.}
We are grateful to the Max Planck Institute for Mathematics and the Hausdorff Center for Mathematics in Bonn for their support and hospitality.}

\date{\today}

\subjclass{58J50, 35P15, 53C99}
\keywords{Bottom of spectrum, covering}

\begin{abstract}
We discuss the behaviour of the bottom of the spectrum of scalar Schr\"odinger operators
under Riemannian coverings.
\end{abstract}

\maketitle

\tableofcontents

\section{Introduction}
\label{intro}

The spectrum $\sigma(M)$ of a Riemannian manifold $M$
is an interesting geometric invariant,
and the relation of $\sigma(M)$ to other geometric invariants of $M$ has attracted much attention.  
We are interested in the behaviour of the bottom $\lambda_0(M)=\inf\sigma(M)$ of the spectrum of $M$
under Riemannian coverings.
More generally,
we study the behaviour of the bottom of the spectrum of (scalar) Schr\"odinger operators under coverings.
Here a Schr\"odinger operator on $M$ is an operator of the form
\begin{align}
	S = \Delta +V,
\end{align}
where $\Delta$ denotes the Laplacian of $M$ and $V\in C^\infty(M)$.
Then $S$ with domain $C^\infty_c(M)\subseteq L^2(M)$ is a symmetric operator.

\emph{We assume throughout that $S$ is bounded from below} (on $C^\infty_c(M)$).
Then the Friedrichs extension $\bar S$ of $S$ exists and is a self-adjoint operator.
If $M$ is complete,
then $S$ is essentially self-adjoint, that is, the closure of $S$ coincides with $\bar S$.
If $M$ is isometric to the interior of a complete Riemannian manifold with boundary,
then $\bar S$ coincides with the extension of $S$
associated to the Dirichlet boundary condition.

Recall that $\Delta$ is non-negative, and hence $S$ is bounded from below if $V$ is bounded from below.

We denote the spectrum and the essential spectrum of $\bar S$
by $\sigma(S,M)$ and $\sigma_{\ess}(S,M)$, respectively.
In the case of the Laplacian, we also write $\sigma(M)$ 
--this is what was meant by the spectrum of $M$ in the first paragraph-- and $\sigma_{\ess}(M)$.
Recall that, for a Lipschitz function $f\ne0$ on $M$ with compact support,
\begin{align}\label{raylei}
  R_S(f) = \frac{\int_M (| \grad f|^2+Vf^2)}{\int_Mf^2}
\end{align}
is called the \emph{Rayleigh quotient} of $f$ (with respect to $S$).
It is important that the \emph{bottom $\inf\sigma(S,M)$ of the spectrum of $\bar S$} is given by
\begin{align}\label{bottspec}
	\lambda_0(S,M) = \inf R_S(f),
\end{align}
where the infimum is taken over all non-zero $f\in C^\infty_c(M)$,
or, equivalently, over all non-zero Lipschitz functions $f$ on $M$ with compact support.
The \emph{bottom $\inf\sigma_{\ess}(S,M)$ of the essential spectrum of $\bar S$} is given by
\begin{align*}
	\lambda_{\ess}(S,M) = \sup \lambda_0(S,M\setminus K),
\end{align*}
where the supremum is taken over all compact subsets $K$ of $M$.
(This is well known in the case where $M$ is complete.)
In the case of the Laplacian, $S=\Delta$, we also write $\lambda_0(M)$ and $\lambda_{\ess}(M)$.

Consider now a Riemannian covering $p\colon M_1\to M_0$,
a Schr\"odinger operator $S_0$ on $M_0$, and its lift $S_1$ under $p$ to $M_1$.
We assume for now that $M_0$ and $M_1$ are connected,
although it is important in intermediate steps of our later discussion that $M_1$ may also not be connected.
We denote by $\Gamma$ the group of covering transformations of $p$.
It is transitive on the fibers of $p$ if and only if $p$ is a normal covering.
In the short \cref{secele}, we discuss the following general inequality.

\begin{theo}[Monotonicity]\label{monot}
We always have $\lambda_{0}(S_1,M_{1})\ge\lambda_{0}(S_0,M_{0})$.
\end{theo}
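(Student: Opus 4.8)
The plan is to use the variational characterization \eqref{bottspec} of the bottom of the spectrum and to push test functions down from $M_1$ to $M_0$ without increasing the Rayleigh quotient. Concretely, given a non-zero Lipschitz function $f_1$ on $M_1$ with compact support, I would define $f_0\colon M_0\to\R$ by
\begin{align*}
	f_0(x) = \Bigl(\sum_{y\in p^{-1}(x)} f_1(y)^2\Bigr)^{1/2}.
\end{align*}
Since $\supp f_1$ is compact and the fibers of $p$ are discrete, $p^{-1}(x)\cap\supp f_1$ is finite for every $x$, and this holds locally uniformly; hence $f_0$ is a well-defined (locally finite sum of squares of) Lipschitz function with $\supp f_0 = p(\supp f_1)$ compact and $f_0\ne0$.

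Next I would compare the three integrals entering the Rayleigh quotients. Working over an evenly covered open set $U\subseteq M_0$, write $p^{-1}(U)=\bigsqcup_i U_i$ with $p|_{U_i}\colon U_i\to U$ an isometry, and let $g_i = f_1\circ(p|_{U_i})^{-1}$ be the corresponding functions on $U$; only finitely many $g_i$ are non-zero on any given compact subset of $U$. Then $f_0^2=\sum_i g_i^2$ and, since $V_1 = V_0\circ p$, also $V_0 f_0^2 = \sum_i (V_1 f_1^2)\circ(p|_{U_i})^{-1}$ on $U$. Summing over a countable evenly covered cover of $M_0$ with subordinate partition of unity $\{\rho_k\}$, and using that $\{\rho_k\circ p\}$ is a partition of unity on $M_1$ and that $p$ is a local isometry, one gets $\int_{M_0} f_0^2 = \int_{M_1} f_1^2$ and $\int_{M_0} V_0 f_0^2 = \int_{M_1} V_1 f_1^2$. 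For the gradient term, on the set where $f_0>0$ one has $\grad f_0 = f_0^{-1}\sum_i g_i\,\grad g_i$, so the Cauchy--Schwarz inequality gives the pointwise bound $|\grad f_0|^2\le\sum_i|\grad g_i|^2$; since $\grad f_0=0$ almost everywhere on the zero set of $f_0$, this inequality holds almost everywhere, and integrating yields $\int_{M_0}|\grad f_0|^2\le\int_{M_1}|\grad f_1|^2$.

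Combining these three facts gives $R_{S_0}(f_0)\le R_{S_1}(f_1)$, whence $\lambda_0(S_0,M_0)\le R_{S_1}(f_1)$ by \eqref{bottspec}. Taking the infimum over all non-zero Lipschitz $f_1$ with compact support on $M_1$ yields $\lambda_0(S_0,M_0)\le\lambda_0(S_1,M_1)$, as claimed.

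I expect the main obstacle to be the regularity and summability bookkeeping for $f_0$: verifying that the fiberwise $\ell^2$-sums are locally finite (so $f_0$ is a bona fide Lipschitz function with compact support and the local-to-global integration is legitimate), and that the Cauchy--Schwarz gradient inequality is valid almost everywhere across the zero set of $f_0$. The underlying idea --- fiberwise $\ell^2$-summation followed by Cauchy--Schwarz --- is robust, and, pleasantly, it uses neither normality of $p$ nor connectedness of $M_1$, which is consistent with the generality needed later.
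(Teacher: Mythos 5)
Your proposal is correct and is essentially the paper's second proof of \cref{monot} (the one attributed to \cite{BMP1}): you use the same pushdown $f_0(x)=\bigl(\sum_{y\in p^{-1}(x)}|f(y)|^2\bigr)^{1/2}$ and the same Cauchy--Schwarz estimate on the gradient term to get $R_{S_0}(f_0)\le R_{S_1}(f_1)$. The paper states the Rayleigh-quotient inequality as a ``straightforward calculation''; your write-up simply supplies the details of that calculation.
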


We say that $p$ is \emph{amenable} if the right action of the fundamental group $\pi_1(M_0,x)$
on the fiber $p^{-1}(x)$ is amenable for one or, equivalently, for any $x\in M_0$ (see \cref{secamen}).
If $p$ is normal,
then $p$ is amenable if and only if the group $\Gamma$ of covering transformations of $p$ is amenable.

\begin{theo}\label{tame}
If $p$ is amenable, then $\lambda_0(S_1,M_1)=\lambda_0(S_0,M_0)$.
\end{theo}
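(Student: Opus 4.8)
The plan is to combine the monotonicity inequality of \cref{monot} with the variational description \eqref{bottspec} of the bottom of the spectrum, in the spirit of Brooks' theorem on amenable coverings. By \cref{monot} it suffices to prove the reverse inequality $\lambda_0(S_1,M_1)\le\lambda_0(S_0,M_0)=:\lambda$; by \eqref{bottspec} this means exhibiting, for each $\ve>0$, a nonzero Lipschitz function with compact support on $M_1$ whose Rayleigh quotient with respect to $S_1$ is smaller than $\lambda+\ve$. So fix $\ve>0$ and, using \eqref{bottspec} on $M_0$, choose $f_0\in C^\infty_c(M_0)$ with $R_{S_0}(f_0)<\lambda+\ve/2$.

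The heart of the matter is to transplant $f_0$ to $M_1$ by a F\o lner-type cut-off, exploiting amenability of $p$. Fix a relatively compact open $\Omega\subseteq M_0$ with $\supp f_0\subseteq\Omega$, a point $x_0\in\Omega$, and a cover of $\Omega$ by finitely many simply connected charts with a subordinate smooth partition of unity; over each chart $p$ is trivial, and the transition loops of the cover, based at $x_0$, represent a finite symmetric subset $E$ of $\pi_1(M_0,x_0)$. Since $M_1$ is connected, $\pi_1(M_0,x_0)$ acts transitively on the fibre $p^{-1}(x_0)$, and since $p$ is amenable this action satisfies the F\o lner condition (see \cref{secamen}): for every $\eta>0$ there is a finite $\Phi\subseteq p^{-1}(x_0)$ with $|(\Phi\cdot e)\,\triangle\,\Phi|<\eta\,|\Phi|$ for all $e\in E$. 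Using the partition of unity and $\Phi$, the standard bookkeeping along the charts produces a Lipschitz function $\chi\colon M_1\to[0,1]$ with compact support in $p^{-1}(\Omega)$ which equals $1$ on the sheets indexed by those $y\in\Phi$ all of whose $E$-translates still lie in $\Phi$, while $|\grad\chi|$ is bounded and $\grad\chi$ is supported over at most $C\eta\,|\Phi|$ sheets, with $C$ depending only on the cover. Set $f:=\chi\cdot(f_0\circ p)$, a nonzero Lipschitz function with compact support on $M_1$.

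A direct computation then compares $R_{S_1}(f)$ with $R_{S_0}(f_0)$. Splitting $M_1\cap p^{-1}(\Omega)$ into the sheets above and using that $p$ is a local isometry, each of the $N\ge(1-C\eta)|\Phi|$ sheets on which $\chi\equiv1$ contributes exactly $\int_{M_0}(|\grad f_0|^2+V_0f_0^2)$ to $\int_{M_1}(|\grad f|^2+V_1f^2)$ and exactly $\int_{M_0}f_0^2$ to $\int_{M_1}f^2$; each of the remaining $O(\eta|\Phi|)$ sheets that meet $\supp\chi$ contributes to $\int_{M_1}(|\grad f|^2+V_1f^2)$ a quantity bounded in absolute value by a constant depending only on $f_0$, $V_0$ and the cover (by the elementary estimate $|\grad(\chi h)|^2\le2\chi^2|\grad h|^2+2h^2|\grad\chi|^2$), and a non-negative quantity at most $\int_{M_0}f_0^2$ to $\int_{M_1}f^2$. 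Hence
\[
	R_{S_1}(f)=\frac{N\int_{M_0}(|\grad f_0|^2+V_0f_0^2)+O(\eta|\Phi|)}{N\int_{M_0}f_0^2+O(\eta|\Phi|)}\xrightarrow[\ \eta\to0\ ]{}R_{S_0}(f_0)<\lambda+\tfrac{\ve}{2},
\]
since $N\ge\tfrac12|\Phi|$ for small $\eta$. Choosing $\eta$ small enough we get $R_{S_1}(f)<\lambda+\ve$; by \eqref{bottspec} this gives $\lambda_0(S_1,M_1)\le\lambda+\ve$, and as $\ve>0$ was arbitrary, $\lambda_0(S_1,M_1)\le\lambda=\lambda_0(S_0,M_0)$. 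Together with \cref{monot} this proves the theorem.

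I expect the only real obstacle to be the construction of the cut-off $\chi$ in the second step: carrying out coherently, across the overlapping simply connected charts covering $\Omega$, the combinatorics by which a F\o lner set $\Phi$ of sheets yields a Lipschitz cut-off whose gradient is supported over only an $\eta$-fraction of the sheets --- this is the substitute, for a covering that need be neither normal nor cocompact, for the single fundamental domain used in Brooks' original argument. The passage from amenability of $p$ to the F\o lner condition for the fibre action, and the final computation, are routine; in particular neither completeness nor compactness of $M_0$ is used, since all test functions occurring have compact support in $M_0$.
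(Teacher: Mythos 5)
Your proposal is correct and takes essentially the same route as the paper's proof (after \cite{BMP1}): use \cref{monot} for one inequality, and for the other transplant each competitor $f_0\in C^\infty_c(M_0)$ to $M_1$ by multiplying the lift $f_0\circ p$ with a Lipschitz cut-off $\chi$ coming from a F\o{}lner set in the fibre, then estimate $R_{S_1}(\chi\cdot(f_0\circ p))$.

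You differ only in how $\chi$ is built, which you yourself flag as the main obstacle, so a brief comparison may be useful. The paper constructs a Lipschitz partition of unity $\{\vf_1\}\cup\{\vf_y\}_{y\in p^{-1}(x)}$ directly on $M_1$, with $\vf_1=0$ over $p^{-1}(B(x,r))\supseteq\supp(f_0\circ p)$, $\supp\vf_y\subseteq B(y,r+1)$ (balls with respect to an auxiliary complete background metric), and Lipschitz constants of the $\vf_y$ independent of $y$, and then sets $\chi=\sum_{y\in E}\vf_y$ for a suitable F\o{}lner set $E\subseteq p^{-1}(x)$. Your chart-based patching is a valid alternative, but one precision should be added: since $\Omega$ need not be simply connected, ``the sheet over $\Omega$ indexed by $y$'' is defined only chart by chart (via chosen paths from $x_0$ to the chart basepoints), and the local indexings disagree across chart overlaps exactly by the monodromy of the transition loops in $E$. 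Writing $\chi=\sum_i(\rho_i\circ p)\,\chi_i$, with $\rho_i$ the partition of unity on $M_0$ and $\chi_i$ the locally constant indicator of $\Phi$-indexed sheets over $U_i$, one has $\chi\equiv1$ precisely where all local indexings land in $\Phi\setminus\partial_E\Phi$, and $\grad\chi=\sum_i\chi_i\,(\grad\rho_i)\circ p$, which vanishes wherever the $\chi_i$ agree (since $\sum_i\grad\rho_i=0$) and hence is supported on the F\o{}lner-small set where they disagree. With this made explicit your Rayleigh-quotient estimate goes through as stated; like the paper's argument, it requires no completeness, normality, or fundamental-domain hypothesis.
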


In \cref{secame}, we will review the evolution of \cref{tame} from first versions in Brooks's \cite{Br1,Br2}
over intermediate versions in articles by Berard-Castillon \cite{BC} and Ji-Li-Wang \cite{JLW}
to the above version from our article \cite{BMP1}.

The problem of when the monotonicity $\lambda_{0}(S_1,M_{1})\ge\lambda_{0}(S_0,M_{0})$ is strict is much more intricate.

\begin{theo}\label{name}
If $p$ is not amenable and $\lambda_{\ess}(S_0,M_0)>\lambda_0(S_0,M_0)$,
then $\lambda_0(S_1,M_1)>\lambda_0(S_0,M_0)$.
\end{theo}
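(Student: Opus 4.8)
The plan is to argue by contraposition on the conclusion, combined with a test-function/averaging argument that exploits non-amenability. Suppose $\lambda_0(S_1,M_1)=\lambda_0(S_0,M_0)=:\lambda$ (the inequality $\ge$ being Theorem \ref{monot}); I want to show that either $p$ is amenable or $\lambda_{\ess}(S_0,M_0)=\lambda$. The starting point is to extract from the equality of bottoms an \emph{almost-invariant} family of sections downstairs. Concretely, pick $\varepsilon>0$ and a nonzero $f\in C^\infty_c(M_1)$ with $R_{S_1}(f)<\lambda+\varepsilon$. Pushing $f^2$ (or, more precisely, the measure $|f|^2\,dv$) down to $M_0$ via $p$ produces a compactly supported function, and the key analytic input is a \emph{co-area / Fubini} comparison: the Dirichlet energy of $f$ on $M_1$ dominates, up to the potential term, the energy of a suitable function built from $f$ on $M_0$ \emph{only if} the fibers can be efficiently averaged, i.e.\ only under amenability. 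So the contrapositive naturally splits: when $p$ is non-amenable, the defect in this comparison is bounded below by a Cheeger-type constant of the $\pi_1(M_0,x)$-action on the fiber, which is positive by non-amenability (this is the quantitative form of the Følner criterion hinted at by the \texttt{fc} environment).

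Here is the intended mechanism in more detail. Fix a point $x_0\in M_0$ and work with the discretization of $p$ by the fiber $F=p^{-1}(x_0)$, on which $\pi_1(M_0,x_0)$ acts. Non-amenability of this action gives a constant $h>0$ such that for every finitely supported $\varphi\colon F\to\mathbb R$, the ``combinatorial gradient'' has $\ell^2$-norm at least $h\|\varphi\|_{\ell^2}$ after projecting off the averaging operator; equivalently, the Markov operator of the action on $\ell^2(F)$ has norm $\rho<1$. The second step is to transfer this spectral gap from the discrete model to the Riemannian covering: using a standard patching of $M_0$ by relatively compact open sets lifted to $M_1$, one shows that for $f\in C^\infty_c(M_1)$,
\begin{align}\label{name-transfer}
  \int_{M_1}\bigl(|\grad f|^2+V_1 f^2\bigr)
  \ \ge\ \lambda\int_{M_1} f^2
  \ +\ c\,h^2\!\int_{M_1}\bigl(f-\overline f\circ p\bigr)^2,
\end{align}
where $\overline f\in C^\infty_c(M_0)$ is (a smoothing of) the fiberwise $\ell^2$-projection of $f$ to the $\Gamma$-invariant part, and $c>0$ depends only on the geometry of a fundamental domain. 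Granting \eqref{name-transfer}, if $\lambda_0(S_1,M_1)=\lambda$ then testing with an almost-minimizing sequence $f_n$ forces $\|f_n-\overline f_n\circ p\|_{L^2}\to0$ relative to $\|f_n\|_{L^2}$, so the $\overline f_n$ are a bounded-energy, non-collapsing family downstairs; a diagonal/compactness argument over exhausting compacta then yields functions on $M_0$ supported outside arbitrarily large compact sets with Rayleigh quotient approaching $\lambda$, i.e.\ $\lambda_{\ess}(S_0,M_0)\le\lambda$. Since $\lambda_{\ess}(S_0,M_0)\ge\lambda_0(S_0,M_0)=\lambda$ always, this gives $\lambda_{\ess}(S_0,M_0)=\lambda$, the negation of the hypothesis — contradiction. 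Hence $\lambda_0(S_1,M_1)>\lambda$.

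The main obstacle is establishing the quantitative transfer estimate \eqref{name-transfer} with a genuinely positive constant coming \emph{only} from non-amenability, uniformly over all $f\in C^\infty_c(M_1)$ regardless of how $\supp f$ spreads through infinitely many sheets. The delicate points are: (i) choosing the decomposition $f=\overline f\circ p+(f-\overline f\circ p)$ so that the cross term in $\int|\grad f|^2$ has a sign or is absorbable — this is where one must be careful that $\overline f$ is the $L^2$-fiberwise average, not a pointwise one, and that smoothing it does not destroy the inequality; (ii) passing from the discrete Cheeger/Kazhdan constant of the fiber action to a Poincaré-type inequality on $M_1$ via a partition of unity subordinate to lifts of a good cover of $M_0$, controlling overlaps independently of the sheet; and (iii) the final compactness step, where one must ensure the limiting functions on $M_0$ are nonzero and escape to infinity — here the hypothesis $\lambda_{\ess}(S_0,M_0)>\lambda_0(S_0,M_0)$ enters precisely to force the almost-minimizers upstairs to be ``concentrated'' in a way that, once averaged down, cannot remain in a fixed compact set. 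I expect steps (i) and (ii) to be the technical heart, with the role of the essential-spectrum gap being, as usual in such strictness results, to convert an $L^2$ near-minimizer into one supported near infinity.
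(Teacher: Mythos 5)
Your approach is genuinely different from the paper's, but it has a fatal gap in the proposed transfer estimate. You want an inequality of the form
\begin{align*}
  \int_{M_1}\bigl(|\grad f|^2+V_1 f^2\bigr)
  \ \ge\ \lambda\int_{M_1} f^2
  \ +\ c\,h^2\!\int_{M_1}\bigl(f-\overline f\circ p\bigr)^2,
\end{align*}
with $c>0$ depending only on the geometry of a fundamental domain and $h>0$ supplied by non-amenability of the fiber action, where $\overline f\in C^\infty_c(M_0)$ is the fiberwise $\ell^2$-projection of $f$ onto the $\Gamma$-invariant part. For an infinite-sheeted covering, the pullback $\overline f\circ p$ of any nonzero $\overline f\in C^\infty_c(M_0)$ has infinite $L^2$-norm on $M_1$, so the $\ell^2$-projection of an $L^2$ object onto constants along fibers is identically zero; hence $\overline f=0$ and your inequality degenerates to $R_{S_1}(f)\ge\lambda+ch^2$ for all $f\in C^\infty_c(M_1)$, that is, $\lambda_0(S_1,M_1)>\lambda_0(S_0,M_0)$ from non-amenability alone, with no use of the hypothesis on $\lambda_{\ess}$. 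That conclusion is false: \cref{sullivan} describes surfaces $S_\alpha$ with $\lambda_0(S_\alpha)=0$, non-amenable fundamental group, and universal covering still having bottom of spectrum equal to zero. So no uniform fiberwise Poincar\'e inequality of this kind can hold over a noncompact base, and the hypothesis $\lambda_{\ess}(S_0,M_0)>\lambda_0(S_0,M_0)$ must be used \emph{to produce the spectral gap}, not merely afterwards to push almost-minimizers to infinity.

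The paper's proof uses the essential-spectrum gap at exactly that earlier stage: it supplies a compact domain $K\subseteq M_0$ with $\lambda_0(S_0,M_0\setminus K)>\lambda_0(S_0,M_0)$, which by \cref{amecom} may be enlarged so that the (possibly disconnected) restricted covering $p^{-1}(K)\to K$ is already non-amenable. The missing ingredient your scheme would need is then precisely \cref{neuma}: non-amenability of a covering of a \emph{compact} manifold with boundary forces the bottom of the Neumann spectrum of the cover to be positive. This positive Neumann bottom on $p^{-1}(K)$ --- a local, compactly supported discrete-to-continuous transfer, not a global fiberwise Poincar\'e inequality --- is the correct quantitative output of non-amenability here. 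It allows cutting off an almost-minimizing sequence $f_n$ upstairs so that $\supp f_n\cap p^{-1}(K)=\emptyset$ without spoiling the Rayleigh quotients; the pushdowns then avoid $K$ while still almost minimizing $R_{S_0}$, and the gap over $M_0\setminus K$ delivers the contradiction. Your steps (i) and (ii), which you rightly identify as the technical heart, cannot be carried out uniformly over a noncompact fundamental domain; restricting first to a compact piece, where the geometry is uniform, is exactly what makes the transfer work.
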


We have $\lambda_{\ess}(S_0,M_0)=\infty$ in the case where the base manifold is closed,
so that we have a strict equivalence between amenability of $p$ and the equality $\lambda_0(S_1,M_1)=\lambda_0(S_0,M_0)$ in this case.
This equivalence, for the Laplacian in the case of the universal covering of a closed manifold, is the content of Brooks's article \cite{Br1}.
In the later article \cite{Br2},
he extended his result to normal coverings $p$,
where $M_0$ is noncompact of finite topological type (in his sense)
and where the covering $p$ admits a fundamental domain
which satisfies a certain isoperimetric inequality.
Roblin and Tapie \cite{RT} obtained an analogous result under a different requirement,
namely the existence of a spectrally optimal fundamental domain (in their sense)
and an assumption on its Neumann spectrum.
In \cite{Br2}, Brooks speculates whether his results hold
under the more general condition that $\lambda_{\ess}(M_0)>\lambda_0(M_0)$,
not assuming the existence of any kind of specific fundamental domains.
Under the assumption that the Ricci curvature of $M_0$ is bounded from below,
this is the main result in \cite{BMP2}.
Finally, the second author of this article established \cref{name} in full generality \cite{Po2}.
We review this development in more detail in \cref{susname}.

\begin{rem}\label{sullivan}
Let $\lambda_0=\lambda_0(M)$
and $p_t(x,y)$ be the heat kernel associated to the Friedrichs extension of $\Delta$.
Following Sullivan \cite{Su}, we say that $\lambda\in\R$ belongs to the \emph{Green's region} of $M$ if
\begin{align*}
	G_\lambda(x,y) = \int_0^\infty e^{\lambda t}p_t(x,y)dt < \infty
\end{align*}
for some (and then all) $x\ne y$ in $M$.
In \cite[Theorem 2.6]{Su},
Sullivan establishes that the Green's region is either $(-\infty,\lambda_0)$ or else  $(-\infty,\lambda_0]$,
and calls $M$ then \emph{$\lambda_0$-recurrent} and \emph{$\lambda_0$-transient}, respectively.
In the $\lambda_0$-recurrent case,
positive $\lambda_0$-harmonic functions on $M$ are constant multiples of one another,
and the random process on $M$ with transition densities
\begin{align*}
	e^{\lambda_0t}p_t(x,y)\vf(y)/\vf(x)
\end{align*}
is recurrent, where $\vf$ is a positive $\lambda_0$-harmonic function on $M$ \cite[Theorems 2.7 and 2.10]{Su}.
If $\lambda_{\ess}(M)>\lambda_0$, then $M$ is $\lambda_0$-recurrent \cite[Theorem 2.8]{Su}.
More generally, if $\lambda_0$ is an eigenvalue of $M$,
then $M$ is $\lambda_0$-recurrent.
In view of this, it is natural to ask whether the assertion of \cref{name} holds (for the Laplacian)
under the weaker assumption that $M_0$ is $\lambda_0(M_0)$-recurrent and $p$ is not amenable
or that $\lambda_0(M_0)$ is an eigenvalue of $M_0$ and $p$ is not amenable.
However, there are counterexamples, even to the latter question.
Namely, for $0<\alpha<1$, consider a closed surface $S_\alpha$
with, say, one puncture and a Riemannian metric on it such that, around the puncture,
it is isometric to the surface of revolution with profile curve $(x,\exp(-x^\alpha))_{x\ge1}$.
Brooks \cite[Section 1]{Br2} obtained that $S_\alpha$ is complete with finite volume,
and hence $\lambda_0(S_\alpha)=0$ is an eigenvalue of $S_\alpha$ with eigenfunction $1$,
and that the bottom of the spectrum of the universal covering of $S_\alpha$ is equal to zero \cite[Lemma 1]{Br2}.
Moreover, if the Euler number of $S_\alpha$ is negative,
then its fundamental group $\Gamma$ contains the free group in two generators as a subgroup.
In particular, $\Gamma$ is non-amenable and, hence,
the universal covering of $S_\alpha$ and many Riemannian coverings of $S_\alpha$ in between are counterexamples.

Brooks also explains a hyperbolic counterexample \cite[Section 1]{Br2};
compare with \cref{sechype} and, in particular, \cref{remhype} below.
\end{rem}

\begin{que}\label{question}
Is there a reasonable replacement of the assumption
that $\lambda_0(S_0,M_0)$ does not belong to $\sigma_{\ess}(S_{0},M_{0})$ in \cref{name},
which generalizes \cref{name} or even turns the conclusion into an equivalence?
\end{que}

\noindent{\bf Structure of the article.}
In the second section, we introduce some notation,
discuss the renormalization of scalar Laplace type operators,
and introduce the concept of the amenability of actions of countable groups on countable sets.
In the ensuing three sections, we discuss Theorems \ref{monot} -- \ref{name}.
Whereas the proofs of Theorems \ref{monot} and \ref{tame} seem quite satisfactory,
we indicate a simplification of the existing proof of \cref{name} in \cref{susim}.
\cref{sechype} contains a new application to geometrically finite locally symmetric spaces of negative sectional curvature.
In the appendix, we discuss some basic relations between geometry and the analysis of differential operators.
Since it does not make an essential difference in the lines of arguments,
we consider differential operators on Hermitian or Riemannian vector bundles
over Riemannian manifolds $M$ (with weighted measures).
Under natural assumptions,
we establish essential self-adjointness in the case where $M$ is complete,
the characterization of the essential spectrum by geometric Weyl sequences,
and the stability of the essential spectrum under removal of compact domains.
These latter properties are known in the case where $M$ is complete,
but we could not ferret out a reference
for the case of the Friedrichs extension of the operator in the incomplete case.
Since this case has some subtleties, the inclusion of the discussion seems justified.

\section{Preliminaries}
\label{secpre}

We let $M$ be a Riemannian manifold of dimension $m$.
For a Borel subset $A$ of $M$,
we denote by $|A|$ the volume of $A$ with respect to the volume element $\dv$ of $M$.
Similarly, for a submanifold $N$ of $M$ of dimension $n<m$ and a Borel subset $B$ of $N$,
we let $|B|$ be the $n$-dimensional Riemannian volume of $B$.
To avoid confusion, we also write $|B|_n$ if necessary. 
We call
\begin{align}\label{checon}
 	h(M) = \inf\frac{|\partial D|}{|D|}
 	= \inf\frac{|\partial D|_{m-1}}{|D|_m}
 	\hspace{3mm}\text{and}\hspace{3mm}
	h_{\ess}(M) = \sup h(M\setminus K)
\end{align}
the \emph{Cheeger constant} and \emph{asymptotic Cheeger constant of $M$}, respectively.
Here the infimum is taken over all compact domains $D\subseteq M$ with smooth boundary $\partial D$
and the supremum over all compact subsets $K$ of $M$.
The respective \emph{Cheeger inequality} asserts that
\begin{align}\label{chein}
	\lambda_0(M) \ge \frac14 h^2(M)
	\hspace{3mm}\text{and}\hspace{3mm}
	\lambda_{\ess}(M) \ge \frac14 h_{\ess}^2(M).
\end{align}
Frequently, we consider a \emph{weighted measure} on $M$, that is,
a measure of the form $\vf^2\dv$, where $\vf\in C^\infty(M)$ is positive.
For a Borel subset $A$ of $M$, we then denote by $|A|_\vf$ the $\vf$-volume of $A$,
\begin{align}\label{phivol}
  |A|_\vf = \int_A \vf^2 = \int_A \vf^2\dv.
\end{align}
Similarly, for a submanifold $N$ of $M$ of dimension $n<m$ and a Borel subset $B$ of $N$,
we let $|B|_\vf$ be the $n$-dimensional $\vf$-volume of $B$.
To avoid confusion, we also write $|B|_{n,\vf}$ if necessary. 

We write $L^2(M)$ or $L^2(M,\dv)$ for the space of equivalence classes of
measurable functions on $M$ which are square-integrable with respect to $\dv$.
Similarly, if $\mu=\vf^2\dv$ is a weighted measure on $M$,
we write $L^2(M,\vf^2\dv)$ or $L^2(M,\mu)$ for the space of equivalence classes of
measurable functions on $M$ which are square-integrable with respect to $\mu$.

\subsection{Renormalizing scalar operators}
\label{suseren}
The idea of renormalizing the Lap\-lacian occurs e.g.\ in \cite[Section 2]{Br2} and \cite[Section 8]{Su}.
The idea works as well for Schr\"odinger operators, as explained in \cite[Section 7]{Po};
compare also with \cite[Section 7]{Co}.

We consider the following four types of \emph{scalar differential operators},
that is, differential operators on $C^\infty(M)$:
\begin{enumerate}
\item
the \emph{Laplacian} $\Delta$,
\item
\emph{Schr\"odinger operators} $L=\Delta+V$ with \emph{potential} $V$,
\item
\emph{diffusion operators} $L=\Delta+X$ with \emph{drift} $X$,
\item
\emph{Laplace type operators} $L=\Delta+X+V$,
\end{enumerate}
where $V$ is a smooth function and $X$ a smooth vector field on $M$.
The Laplacian and Schr\"odinger operators are formally self-adjoint,
that is, are symmetric on $C^\infty_c(M)\subseteq L^2(M,\dv)$.
Using integration by parts,
a straightforward computation shows the following

\begin{prop}\label{mux}
A diffusion or Laplace type operator $L$ is
\emph{formally self-adjoint with respect to a weighted measure $\mu=\vf^2\dv$ on $M$},
that is, is symmetric on $C^\infty_c(M)\subseteq L^2(M,\mu)$,
if and only if $X=-2\grad\ln\vf$.
\end{prop}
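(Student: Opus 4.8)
The plan is to compute the antisymmetric bilinear form
\begin{align*}
  B(f,g) = \int_M (Lf)\,g\,\vf^2\dv - \int_M f\,(Lg)\,\vf^2\dv, \qquad f,g\in C^\infty_c(M),
\end{align*}
and to show that $B\equiv 0$ --- which is precisely the condition that $L$ be symmetric on $C^\infty_c(M)\subseteq L^2(M,\mu)$ --- holds if and only if $X=-2\grad\ln\vf$. Since the potential $V$ acts by multiplication, it contributes nothing to $B$, so the diffusion and Laplace type cases are treated simultaneously. Using the convention $\Delta=-\dive\grad$ (so that $\Delta$ is non-negative) and Green's formula --- which produces no boundary terms, as $f$ and $g$ have compact support --- one expands
\begin{align*}
  \int_M (\Delta f)\,g\,\vf^2\dv
  = \int_M \vf^2\,\la\grad f,\grad g\ra\,\dv + \int_M g\,\la\grad f,\grad(\vf^2)\ra\,\dv ,
\end{align*}
and similarly with $f$ and $g$ interchanged. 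The first summand is symmetric in $f,g$ and cancels in $B$, so, writing $\grad(\vf^2)=2\vf^2\grad\ln\vf$ and adding the contribution of $X$,
\begin{align*}
  B(f,g) = \int_M \big(g\,(Zf) - f\,(Zg)\big)\,\vf^2\dv, \qquad Z := X + 2\grad\ln\vf ,
\end{align*}
where $Z$ is a smooth vector field, acting on functions by directional differentiation. In particular, if $X=-2\grad\ln\vf$ then $Z=0$ and $B\equiv 0$; this gives the ``if'' direction.

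For the converse, the idea is that $B\equiv 0$ forces $Z$ to vanish pointwise. Assuming $B\equiv 0$ and fixing $g\in C^\infty_c(M)$, another integration by parts turns $\int_M (Zf)\,g\,\vf^2 = \int_M f\,(Zg)\,\vf^2$ into $\int_M f\big(\dive(g\vf^2 Z) + (Zg)\,\vf^2\big)\dv = 0$ for all $f\in C^\infty_c(M)$, hence $\dive(g\vf^2 Z) + (Zg)\,\vf^2 \equiv 0$. Expanding the divergence, this reads
\begin{align*}
  2\,\vf^2\,(Zg) + g\,\dive(\vf^2 Z) = 0 \qquad \text{for every } g\in C^\infty_c(M) .
\end{align*}
Evaluating at an arbitrary point $p$ with a test function $g$ chosen so that $g(p)=0$ while $dg(p)$ is a prescribed covector at $p$, the second term disappears and one gets $\la Z(p),\grad g(p)\ra = 0$ for every such choice; since $\vf(p)\ne0$, this forces $Z(p)=0$. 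As $p$ was arbitrary, $Z\equiv0$, that is, $X=-2\grad\ln\vf$.

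The computation is routine, as the paper indicates, and I do not expect a genuine obstacle. The only step that calls for a little care is the final localization: one must separate the pointwise value of the test function $g$ from its differential at $p$ in order to obtain the pointwise vanishing of $Z$, rather than the weaker conclusion $\dive(\vf^2 Z)\equiv0$ that already follows from testing only against functions $g$ that are locally constant near $p$.
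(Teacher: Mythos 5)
Your computation is correct and is exactly the ``straightforward computation'' via integration by parts that the paper alludes to without writing out: the two Green's-formula passes and the localization argument to extract $Z\equiv0$ pointwise are all sound. The only caveat I would flag is that the final step does not even need the two-parameter family of test functions you describe: once you have the pointwise identity $2\vf^2(Zg)+g\,\dive(\vf^2Z)\equiv0$ for all $g$, choosing $g$ with $g(p)=0$ and $dg(p)$ arbitrary already gives $Z(p)=0$, and the alternative conclusion $\dive(\vf^2Z)\equiv0$ you mention is then automatic rather than a separate piece of information.
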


Obviously, any of the above kind of operators is of Laplace type.
Fix a weighted measure $\mu=\vf^2\dv$.
Then
\begin{align}\label{multphi}
	m_\vf \colon L^2(M,\dv) \to L^2(M,\mu), \quad m_\vf f = \vf f,
\end{align}
is an orthogonal transformation (explaining the square of $\vf$ as a weight).

\begin{prop}[Renormalization]\label{renor}
If a Laplace type operator $L$ as above is formally self-adjoint with respect to $\mu$,
then $m_\vf$ intertwines $L$ with the Schr\"odinger operator $S=\Delta+(V-\Delta\vf/\vf)$,
\begin{align*}
	L = m_\vf^{-1} \circ S \circ m_\vf.
\end{align*}
Conversely, for a Schr\"odinger operator $S$ with potential written in the form $V-\Delta\vf/\vf$,
$L$ is of Laplace type and is formally self-adjoint with respect to $\mu$.
In particular, $L$ is non-negative on $C^\infty_c(M)\subseteq L^2(M,\mu)$
if and only if $S$ is non-negative on $C^\infty_c(M)\subseteq L^2(M,\dv)$.
\end{prop}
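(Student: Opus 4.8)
The plan is to reduce the whole statement to the product rule for the Laplacian together with one short bookkeeping computation. I use the sign convention $\Delta=-\dive\grad$, under which $\Delta(uv)=u\,\Delta v+v\,\Delta u-2\la\grad u,\grad v\ra$ for $u,v\in C^\infty(M)$. For the forward implication I would assume $L=\Delta+X+V$ is formally self-adjoint with respect to $\mu=\vf^2\dv$, so that $X=-2\grad\ln\vf=-2\vf^{-1}\grad\vf$ by \cref{mux}, and then compute $L(m_\vf^{-1}g)=L(\vf^{-1}g)$ for an arbitrary $g\in C^\infty(M)$. Expanding the three summands --- using $\grad(\vf^{-1})=-\vf^{-2}\grad\vf$ and $\Delta(\vf^{-1})=-\vf^{-2}\Delta\vf-2\vf^{-3}|\grad\vf|^2$ in the Laplacian term, and $\grad(\vf^{-1}g)=\vf^{-1}\grad g-g\vf^{-2}\grad\vf$ in the drift term --- the two contributions proportional to $\la\grad g,\grad\vf\ra$ cancel against each other, and so do the two proportional to $|\grad\vf|^2$, leaving
\[
  L(\vf^{-1}g)=\vf^{-1}\bigl(\Delta g+(V-\Delta\vf/\vf)\,g\bigr)=\vf^{-1}Sg.
\]
Multiplying through by $\vf$ gives $m_\vf\circ L\circ m_\vf^{-1}=S$, which is the asserted intertwining relation.

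For the converse, one observes that this is the same computation read backwards. Given a Schr\"odinger operator $S=\Delta+(V-\Delta\vf/\vf)$, put $L:=m_\vf^{-1}\circ S\circ m_\vf$; the Laplace type operator $\widetilde L:=\Delta-2\grad\ln\vf+V$ is formally self-adjoint with respect to $\mu$ by \cref{mux}, and the forward direction yields $m_\vf\circ\widetilde L\circ m_\vf^{-1}=S$, hence $L=\widetilde L$ since $m_\vf$ is invertible. The final ``in particular'' is then immediate from the intertwining: since $m_\vf\colon L^2(M,\dv)\to L^2(M,\mu)$ is an orthogonal transformation carrying $C^\infty_c(M)$ onto itself, $\int_M(Lf)f\,\vf^2=\int_M\bigl(S(m_\vf f)\bigr)(m_\vf f)$ for every $f\in C^\infty_c(M)$, so $L\ge0$ on $C^\infty_c(M)\subseteq L^2(M,\mu)$ if and only if $S\ge0$ on $C^\infty_c(M)\subseteq L^2(M,\dv)$.

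The argument has no conceptual obstacle; the only point requiring care is the bookkeeping in the product-rule expansion, where one must keep the sign convention for $\Delta$ fixed and verify that the cross terms in $\grad g$ and the quadratic terms in $\grad\vf$ do cancel. One could instead package the computation intrinsically by first checking the operator identity $m_\vf^{-1}\circ\Delta\circ m_\vf=\Delta-2\grad\ln\vf+\Delta\vf/\vf$ and then adding on multiplication by $V$, but this comes down to the same calculation.
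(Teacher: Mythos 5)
Your proof is correct and uses essentially the same approach as the paper: a direct product-rule computation for $\Delta$ to verify the intertwining identity. The only difference is the direction — the paper expands $S(\vf f)$ and shows it equals $\vf Lf$, which is slightly shorter since it avoids computing $\Delta(\vf^{-1})$ and $\grad(\vf^{-1})$, whereas you expand $L(\vf^{-1}g)$ and show it equals $\vf^{-1}Sg$; these are the same identity read from opposite ends.
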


\begin{proof}
Using \cref{mux}, we have
\begin{align*}
	(\Delta + &(V-\Delta\vf/\vf))(\vf f) \\
	&= \vf\Delta f + f\Delta\vf - 2\la\grad\vf,\grad f\ra + (V-\Delta\vf/\vf)\vf f \\
	&= \vf(\Delta f - 2\la\grad\ln\vf,\grad f\ra +Vf)
	= \vf Lf.
\end{align*}
for any $f\in C^\infty(M)$.
\end{proof}

For $L$ and $S$ as in \cref{renor},
we get that $L$ is bounded from below (on $C^\infty_c(M)\subseteq L^2(M,\mu)$)
if and only if $S$ is bounded from below (on $C^\infty_c(M)\subseteq L^2(M,\dv)$).
So far, lower boundedness did not play a role in this section,
but we assume it from now on (as agreed upon in the introduction).
Then the Friedrichs extensions $\bar L$ and $\bar S$ of $L$ and $S$ are also intertwined by $m_\vf$,
\begin{align*}
	\bar{L} = m^{-1}_{\varphi} \circ \bar{S} \circ m_{\varphi}.
\end{align*}
In particular, we have
\begin{align}
  \sigma(L,M) = \sigma(S,M).
\end{align}
Therefore the spectral theory of Laplace type operators,
which are formally self-adjoint with respect to a weighted measure,
is the same as that of Schr\"odinger operators.
Thus with respect to spectral theory, one could restrict attention to the latter class of operators.
However, renormalization comes in in a second essential way through \eqref{bottom2} below.

Let $S=\Delta+V$ be a Schr\"odinger operator.
We say that a smooth function $\vf$ on $M$ (not necessarily square-integrable)
is \emph{$\lambda$-harmonic (with respect to $S$)} if it solves $S\vf=\lambda\vf$.
Recall from \eqref{bottspec} that $\lambda_0(S,M)$ is given by an infimum over Rayleigh quotients.
At the same time, $\lambda_0(S,M)$ is the maximal $\lambda\in\R$
such that there is a positive $\lambda$-harmonic function on $M$ (\cref{poslam}).

We fix such a $\lambda\le\lambda_0(S,M)$ and positive $\lambda$-harmonic function $\vf$ on $M$.
By \cref{renor}, $m_\vf$ intertwines $S_\vf=S-\lambda$ with the diffusion operator $L=\Delta-2\grad\ln\vf$.
Point one for renormalizing with $\vf$ is that
\begin{align}\label{bottom2}
	\lambda_0(S_\vf,M) = \lambda_0(S,M) - \lambda = \inf\frac{\int_M |\grad f|^2d\mu}{\int_Mf^2d\mu},
\end{align}
where the infimum is taken over all non-vanishing $f\in C^\infty_c(M)$ or, equivalently,
over all Lipschitz functions $f\ne0$ on $M$ with compact support;
see \cite[Section 2]{Br2} for the Laplacian
and \cite[Proposition 7.3]{Po} for the more general case of Schr\"odinger operators.

The \emph{modified Cheeger constant} and \emph{modified asymptotic Cheeger constant of $M$} are given by
\begin{align}\label{modche}
  h_\vf(M) = \inf \frac{|\partial D|_\vf}{|D|_\vf}
  \hspace{3mm}\text{and}\hspace{3mm}
  h_{\vf,\ess}(M) = \sup h_\vf(M\setminus K),
\end{align}
respectively,
where the infimum is taken over all compact domains $D\subseteq M$ with smooth boundary $\partial D$
and the supremum over all compact subsets $K\subseteq M$.
By \cite[Corollaries 7.4 and 7.5]{Po}, we have the \emph{modified Cheeger inequalities}
\begin{align}\label{modche2}
  \lambda_0(S,M) - \lambda \ge h_\vf(M)^2/4
  \hspace{3mm}\text{and}\hspace{3mm}
  \lambda_{\ess}(S,M) - \lambda \ge h_{\vf,\ess}(M)^2/4.
\end{align}
Point two for renormalizing with $\vf$ is that, choosing $\lambda=\lambda_0(S,M)$ and $\vf$ accordingly,
we obtain that $h_\vf(M)=0$.

The classical Cheeger constants in \eqref{checon} correspond to
the case $S=\Delta$, $\lambda=0$, and $\vf=1$.

\subsection{Amenable actions and coverings}
\label{secamen}
Consider a right action of a countable group $\Gamma$ on a countable set $X$.
The action is called \textit{amenable} if there exists an invariant mean on $\ell^{\infty}(X)$;
that is, a linear map $\mu \colon \ell^{\infty}(X) \to \mathbb{R}$ such that
\begin{align*}
	\inf f \leq \mu (f) \leq \sup f \text{ and } \mu (g^{*} f) = \mu(f)
\end{align*}
for any $f \in \ell^{\infty}(X)$ and any $g \in \Gamma$.
The group $\Gamma$ is called amenable if the right action of $\Gamma$ on itself is amenable.

Clearly, any (right) action of $\Gamma $ on any finite set is amenable.
Furthermore, an action of $\Gamma$ on a countable set $X$ is amenable
if its restriction to a non-empty invariant subset of $X$ is amenable.

Amenability refers to some kind of asymptotic smallness of $X$ with respect to the action of $\Gamma$.
This is made precise by \cref{Folner} and \cref{Folner2} below,
due to F\o{}lner in the case of groups \cite[Main Theorem and Remark]{Fo}
and then extended to actions by Rosenblatt \cite[Theorems 4.4 and 4.9]{Ro}.
To state their results,
it will be convenient to use the following notion of boundary,
\begin{align*}
  \partial_S E
  = \{x\in E \mid \text{$xg\notin E$ for some $g\in S$}\}
  = \cup_{g\in S} (E \setminus Eg^{-1}),
\end{align*}
where $S\subseteq\Gamma$ and $E\subseteq X$.

\begin{thm}\label{Folner}
The following are equivalent:
\begin{enumerate}
\item\label{amen1}
The action of $\Gamma$ on $X$ is amenable.
\item\label{amen2}
For any finite $S\subseteq\Gamma$ and $\varepsilon > 0$,
there exists a finite $E \subseteq X$ such that $|Eg \smallsetminus E| < \ve|E|$ for any $g \in G$.\item\label{amen3}
For any finite $S\subseteq\Gamma$,
there exists a sequence of finite non-empty $E_n\subseteq X$ such that $|\partial_SE_n|/|E_n| \to 0$.
\item\label{amen4}
For any finite $S\subseteq\Gamma$,
there exist sequences of orbits $X_n\subseteq X$ of $\Gamma$
and of finite non-empty $E_n\subseteq X_n$ such that $|\partial_SE_n|/|E_n| \to 0$.
\end{enumerate}
\end{thm}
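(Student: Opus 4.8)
The plan is to prove the cyclic chain of implications \eqref{amen1} $\Rightarrow$ \eqref{amen2} $\Rightarrow$ \eqref{amen3} $\Rightarrow$ \eqref{amen4} $\Rightarrow$ \eqref{amen1}. All of the analytic content sits in the first step, the classical Day trick that converts an abstract invariant mean into concrete almost-invariant finite subsets of $X$; the remaining three steps are finite combinatorics together with a weak${}^{*}$-compactness argument. The main obstacle is precisely this first implication: one must produce a \emph{single} finitely supported function on $X$ whose $S$-translates are close to it in $\ell^{1}$-\emph{norm} (not merely in a weak sense) and then select, from its level sets, a set that is at once finite, non-empty, and nearly $S$-invariant.

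For the implication \eqref{amen1} $\Rightarrow$ \eqref{amen2} I would argue as follows. Let $\mu$ be an invariant mean on $\ell^{\infty}(X)$. The finitely supported probability vectors $f\in\ell^{1}(X)$, viewed as means $h\mapsto\sum_{x}f(x)h(x)$, are weak${}^{*}$-dense in the convex, weak${}^{*}$-compact set of all means (a separating $h\in\ell^{\infty}(X)$ would have to satisfy $\mu(h)>\sup h$, which is impossible), so $\mu$ is a weak${}^{*}$-limit of such $f$. Fix a finite $S\subseteq\Gamma$ and let $\Gamma$ act on $\ell^{1}(X)$ by $(g\cdot f)(x)=f(xg)$. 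Invariance of $\mu$ forces $0$ into the weak closure of the convex set $\{(g\cdot f-f)_{g\in S}:f\text{ a probability vector}\}\subseteq\ell^{1}(X)^{S}$, and for a convex set the weak closure equals the norm closure (Mazur's theorem), so for every $\varepsilon>0$ there is a probability vector $f$ with $\sum_{g\in S}\|g\cdot f-f\|_{1}<\varepsilon$. Applying the layer-cake identity $\|u-v\|_{1}=\int_{0}^{\infty}|\{u>t\}\,\triangle\,\{v>t\}|\,dt$, valid for non-negative $u,v\in\ell^{1}(X)$, with $u=g\cdot f$ and $v=f$ (note $\{g\cdot f>t\}=\{f>t\}g^{-1}$) gives
\[
\sum_{g\in S}\|g\cdot f-f\|_{1}=\int_{0}^{\infty}\Bigl(\sum_{g\in S}\bigl|\{f>t\}g^{-1}\,\triangle\,\{f>t\}\bigr|\Bigr)\,dt
\quad\text{and}\quad
\int_{0}^{\infty}|\{f>t\}|\,dt=\|f\|_{1}=1 .
\]
If the inner sum were $\ge\varepsilon|\{f>t\}|$ for every $t$ with $\{f>t\}\ne\emptyset$, integrating would contradict these two facts; hence there is a level $t>0$ with $\{f>t\}\ne\emptyset$ at which the inner sum is $<\varepsilon|\{f>t\}|$. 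The set $E=\{f>t\}$ is then finite (as $f\in\ell^{1}(X)$ and $t>0$), non-empty, and $|Eg\setminus E|=|E\setminus Eg^{-1}|\le|Eg^{-1}\,\triangle\,E|<\varepsilon|E|$ for all $g\in S$, which is \eqref{amen2} (indeed in a summed form).

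The implications \eqref{amen2} $\Rightarrow$ \eqref{amen3} and \eqref{amen3} $\Rightarrow$ \eqref{amen4} are routine. For the first, the cardinality identity $|E\setminus Eg^{-1}|=|Eg\setminus E|$ (the map $x\mapsto xg$ is a bijection) together with $\partial_{S}E=\cup_{g\in S}(E\setminus Eg^{-1})$ yields $|\partial_{S}E|\le\sum_{g\in S}|Eg\setminus E|$, so the sets furnished by \eqref{amen2} with $\varepsilon=1/n$ satisfy $|\partial_{S}E_{n}|/|E_{n}|\to0$. For the second, decompose $E_{n}$ into the disjoint union of its traces $E_{n}\cap X_{j}$ on the $\Gamma$-orbits $X_{j}$; since the action preserves orbits one has $\partial_{S}(E_{n}\cap X_{j})=(\partial_{S}E_{n})\cap X_{j}$, so $|\partial_{S}E_{n}|/|E_{n}|$ is the weighted average --- with weights $|E_{n}\cap X_{j}|$ --- of the ratios $|\partial_{S}(E_{n}\cap X_{j})|/|E_{n}\cap X_{j}|$ taken over the non-empty traces, and at least one such trace has ratio no larger than this average. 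Choosing one for each $n$ gives the orbits $X_{n}$ and subsets $E_{n}'\subseteq X_{n}$ required by \eqref{amen4}.

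For \eqref{amen4} $\Rightarrow$ \eqref{amen1}, fix an exhaustion $S_{1}\subseteq S_{2}\subseteq\cdots$ of $\Gamma$ by finite symmetric subsets containing $e$, and apply \eqref{amen4} with $\varepsilon=1/k$ (keeping only that the resulting set is a finite non-empty subset of $X$) to get finite non-empty $E_{k}\subseteq X$ with $|\partial_{S_{k}}E_{k}|/|E_{k}|<1/k$. Let $\mu_{k}$ be the normalized counting measure on $E_{k}$, a mean on $\ell^{\infty}(X)$, and let $\mu$ be a weak${}^{*}$-limit point of $(\mu_{k})$ --- equivalently, its limit along a free ultrafilter on $\N$; then $\mu$ is again a mean, since the set of means is weak${}^{*}$-closed. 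For $g\in\Gamma$ and $h\in\ell^{\infty}(X)$ a direct estimate gives $|\mu_{k}(g^{*}h)-\mu_{k}(h)|\le\|h\|_{\infty}(|E_{k}g\setminus E_{k}|+|E_{k}\setminus E_{k}g|)/|E_{k}|$, and once $g\in S_{k}$, hence also $g^{-1}\in S_{k}$, both numerators are bounded by $|\partial_{S_{k}}E_{k}|$, so the difference is $<2\|h\|_{\infty}/k\to0$. Passing to the limit yields $\mu(g^{*}h)=\mu(h)$, so $\mu$ is an invariant mean, which closes the cycle.
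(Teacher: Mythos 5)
The paper does not prove this theorem; it states it and refers the reader to F\o{}lner \cite{Fo} and Rosenblatt \cite{Ro}, so there is no in-text proof to compare against. Your argument is a correct and complete self-contained proof along the classical lines that underlie those references. The only genuinely nontrivial step is \eqref{amen1} $\Rightarrow$ \eqref{amen2}, and you handle it by the standard Day/Namioka mechanism: weak\textsuperscript{*}-density of finitely supported probability vectors in the set of means (your Hahn--Banach separation argument is right), Mazur's theorem to upgrade the weak approximation $g\cdot f_\alpha-f_\alpha\rightharpoonup0$ in $\ell^1(X)^S$ to a norm approximation, and the layer-cake identity $\|u-v\|_1=\int_0^\infty|\{u>t\}\,\triangle\,\{v>t\}|\,dt$ together with $\{g\cdot f>t\}=\{f>t\}g^{-1}$ to extract a single level set $E=\{f>t\}$ with $\sum_{g\in S}|Eg^{-1}\triangle E|<\varepsilon|E|$. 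The remaining implications are correctly reduced to the bijection $|Eg\setminus E|=|E\setminus Eg^{-1}|$, an averaging argument over orbit traces (using $\partial_S(E\cap X_j)=(\partial_SE)\cap X_j$, which holds because orbits are $\Gamma$-invariant), and a Banach--Alaoglu compactness argument using a symmetric exhaustion $S_1\subseteq S_2\subseteq\cdots$ of $\Gamma$ by finite sets containing $e$, whose existence uses the standing hypothesis that $\Gamma$ is countable; the symmetry is indeed needed to bound both $|E_kg\setminus E_k|$ and $|E_k\setminus E_kg|$ by $|\partial_{S_k}E_k|$. One cosmetic remark: in the paper's statement of \eqref{amen2} the quantifier ``for any $g\in G$'' is a typo for ``for any $g\in S$'', which you correctly interpret.
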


Although amenability is a global property,
\cref{Folner} characterizes it in terms of the action of finitely generated subgroups of $\Gamma$.

\begin{cor}\label{Folner2}
Assume that $\Gamma$ is finitely generated and that $S\subseteq\Gamma$ is a finite generating set.
Then the following are equivalent:
\begin{enumerate}
\item\label{amen1f}
The action of $\Gamma$ on $X$ is amenable.
\item\label{amen2f}
There is a sequence of finite non-empty subsets $E_n$ of $X$ such that $|\partial_SE_n|/|E_n|\to 0$.
\item\label{amen3f}
There are sequences of orbits $X_n\subseteq X$ of $\Gamma$
and of finite non-empty subsets $E_n$ of $X_n$ such that $|\partial_SE_n|/|E_n| \to 0$.
\end{enumerate}
\end{cor}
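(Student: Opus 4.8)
The plan is to deduce this from \cref{Folner}, exploiting the fact that when $\Gamma$ is finitely generated, a single generating set $S$ already ``sees'' all of $\Gamma$ in the sense that any finite subset of $\Gamma$ is contained in some power $S^k=\{g_1\cdots g_k\mid g_i\in S\cup S^{-1}\cup\{e\}\}$. First I would record the trivial implications: \eqref{amen3f}$\Rightarrow$\eqref{amen2f} is immediate since each $E_n\subseteq X_n\subseteq X$, and \eqref{amen1f}$\Rightarrow$\eqref{amen3f} is just the special case of \cref{Folner}\eqref{amen4} applied to the given generating set $S$. So the only real content is \eqref{amen2f}$\Rightarrow$\eqref{amen1f}: from a F\o{}lner sequence for the single finite set $S$ we must produce F\o{}lner sequences for every finite $T\subseteq\Gamma$, and then invoke \cref{Folner}\eqref{amen3}$\Rightarrow$\eqref{amen1}.

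The key step is a ``boundary comparison'' lemma: if $T\subseteq S^k$ for some $k$, then there is a constant $C=C(k)$ (one can take $C=|S^k|$, or more crudely $(2|S|+1)^k$) such that $|\partial_T E|\le C\,|\partial_S E|$ for every finite $E\subseteq X$. To see this, write an element of $T$ as $g=h_1\cdots h_k$ with each $h_i\in S\cup S^{-1}\cup\{e\}$; if $x\in E$ but $xg\notin E$, then along the chain $x, xh_1, xh_1h_2,\dots$ there is a first index $i$ where we leave $E$, which exhibits a point $y=xh_1\cdots h_{i-1}\in E$ with $yh_i\notin E$, i.e.\ $y\in\partial_{S\cup S^{-1}}E$, together with the data of how to return from $y$ to $x$ (at most $k$ generators). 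Since $\partial_{S\cup S^{-1}}E$ and $\partial_S E$ differ by at most a bounded factor as well (a point with $xs^{-1}\notin E$ for $s\in S$ forces $(xs^{-1})s=x$ with the source point $xs^{-1}$ lying in the ``outer layer'', which one controls similarly, or one simply enlarges $S$ to $S\cup S^{-1}$ at the outset since that is still finite), the map $x\mapsto y$ together with the bounded choice of return word shows $|\partial_T E|\le C|\partial_S E|$. Dividing by $|E|$, a F\o{}lner sequence for $S$ is a F\o{}lner sequence for $T$; since every finite $T\subseteq\Gamma$ lies in some $S^k$ by finite generation, condition \eqref{amen3} of \cref{Folner} holds, hence the action is amenable.

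The main obstacle is purely bookkeeping: making the comparison $|\partial_T E|\le C|\partial_S E|$ precise with the slightly asymmetric definition of $\partial_S$ used in the excerpt (which involves $E\setminus Eg^{-1}$, i.e.\ translates by $g\in S$ only, not by inverses). The clean fix is to observe at the start that $S$ being a generating set means $S\cup S^{-1}$ generates $\Gamma$ as a monoid, and that replacing $S$ by $S\cup S^{-1}$ changes $|\partial_S E|$ only by a bounded multiplicative factor (each of the finitely many extra ``inverse'' constraints can be absorbed, since $xg^{-1}\notin E$ with $x\in E$ is detected by a point of $E$ adjacent to the complement under $g$); after that reduction the telescoping argument above goes through verbatim. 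No delicate analysis is needed beyond this — everything is a counting estimate, and \cref{Folner} does the heavy lifting.
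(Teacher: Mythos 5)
Your proposal is correct, and since the paper states this as an immediate corollary of \cref{Folner} without spelling out a proof, your route is the expected one: \eqref{amen1f}$\Rightarrow$\eqref{amen3f} and \eqref{amen3f}$\Rightarrow$\eqref{amen2f} are trivial specializations of \cref{Folner}, and the content is \eqref{amen2f}$\Rightarrow$\eqref{amen1f}, which you reduce, via the chain/telescoping comparison $|\partial_T E|\le C(k)|\partial_S E|$ for $T\subseteq (S\cup S^{-1}\cup\{e\})^k$, to \cref{Folner}\eqref{amen3}$\Rightarrow$\eqref{amen1}. The one place that should be tightened is precisely the point you flag, the passage from $\partial_{S\cup S^{-1}}E$ back to $\partial_S E$: there is no pointwise map sending a point $x\in E$ with $xs^{-1}\notin E$ to a point of $\partial_S E$ (your phrase about a point ``adjacent to the complement'' does not produce one, since $\partial_S E$ only looks at right translates by elements of $S$, not $S^{-1}$), and your alternative of ``enlarging $S$ to $S\cup S^{-1}$ at the outset'' changes the hypothesis of \eqref{amen2f} rather than proving it. The clean fix is a counting argument: right-multiplication by $s$ is a bijection of $X$ carrying $E\cap Es^{-1}$ onto $Es\cap E$, so $|E\setminus Es^{-1}|=|E\setminus Es|$ for every $s\in S$, and hence
\begin{align*}
|\partial_{S\cup S^{-1}}E|
\le \sum_{s\in S}\bigl(|E\setminus Es^{-1}|+|E\setminus Es|\bigr)
= 2\sum_{s\in S}|E\setminus Es^{-1}|
\le 2|S|\,|\partial_S E|.
\end{align*}
With this inequality in hand, your chain argument gives $|\partial_T E|\le C(k)\,|\partial_S E|$ for $T\subseteq (S\cup S^{-1}\cup\{e\})^k$, every finite $T\subseteq\Gamma$ lies in such a set because $S$ generates $\Gamma$, and the rest is as you say.
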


Let $p \colon M_{1} \to M_{0}$ be a covering, where $M_0$ is connected, but $M_1$ possibly not.
Fix $x \in M_{0}$, and consider the fundamental group $\pi_{1}(M_{0},x)$ with base point $x$.
For $g \in \pi_{1}(M_{0},x)$, let $c$ be a representative loop in $M_0$.
Given $y\in p^{-1}(x)$, lift $c$ to the path $c_y$ in $M_1$ starting at $y$,
and denote by $y \cdot g$ the endpoint of $c_y$.
In this way, we obtain a right action of $\pi_{1}(M_{0},x)$ on $p^{-1}(x)$,
which is called the \emph{monodromy action} of $p$.
Monodromy actions of $p$ for different choices of base point $x\in M_0$ are conjugate in the natural way.
The covering $p$ is called \emph{amenable} if one, and then any, of its monodromy actions is amenable.

\begin{exa}\label{exancc}
For any covering $p \colon M_{1} \to M_{0}$ (where $M_0$ is connected),
\begin{align*}
	p \sqcup \id \colon M_{1} \sqcup M_{0} \to M_{0}
\end{align*}
is an amenable covering.
\end{exa}

Recall that F\o{}lner's condition allows us to characterize amenability of an action of a group $\Gamma$ in terms of the restriction of the action to finitely generated subgroups of $\Gamma$.
In the context of coverings,
this is reflected by the following characterization of amenability. 

\begin{prop}\label{amecom}
Let $p \colon M_{1} \to M_{0}$ be a covering
and $K_1\subseteq K_2\subseteq \cdots$ be an exhaustion of $M_0$ by compact domains with smooth boundary.
Then $p$ is amenable if and only if the restrictions $p\colon p^{-1}(K_n)\to K_n$ of $p$ are amenable.
\end{prop}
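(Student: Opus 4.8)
\textbf{Proof plan for \cref{amecom}.}
The statement relates amenability of the covering $p$ (a property of the monodromy action of $\pi_1(M_0,x)$ on $p^{-1}(x)$) to amenability of the restricted coverings $p_n\colon p^{-1}(K_n)\to K_n$. The natural tool is the F\o{}lner characterization from \cref{Folner}, item \eqref{amen4}, which lets us test amenability against one finite subset $S\subseteq\pi_1(M_0,x)$ at a time. The plan is to fix a base point $x$ lying in the interior of $K_1$ and to compare, for each $n$, the monodromy action of $\pi_1(K_n,x)$ on $p_n^{-1}(x)=p^{-1}(x)$ with the monodromy action of $\pi_1(M_0,x)$ on the same set $p^{-1}(x)$. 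The point is that the inclusion $K_n\hookrightarrow M_0$ induces a homomorphism $\iota_{n,*}\colon\pi_1(K_n,x)\to\pi_1(M_0,x)$ whose image is the subgroup of loops representable inside $K_n$, and the monodromy action of $\pi_1(K_n,x)$ on $p^{-1}(x)$ is precisely the restriction, via $\iota_{n,*}$, of the monodromy action of $\pi_1(M_0,x)$. Since $K_1\subseteq K_2\subseteq\cdots$ exhausts $M_0$, every loop in $M_0$ based at $x$ is supported in some $K_n$, so the images $\iota_{n,*}(\pi_1(K_n,x))$ form an increasing chain of subgroups whose union is all of $\pi_1(M_0,x)$; in particular any finite subset $S\subseteq\pi_1(M_0,x)$ lies in $\iota_{N,*}(\pi_1(K_N,x))$ for $N$ large enough.

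With this dictionary in place, both directions are routine. For the forward direction, suppose $p$ is amenable, i.e.\ the action of $\pi_1(M_0,x)$ on $p^{-1}(x)$ is amenable. Amenability of an action passes to any subgroup, hence to $\iota_{n,*}(\pi_1(K_n,x))$, and pulling back along the surjection $\pi_1(K_n,x)\to\iota_{n,*}(\pi_1(K_n,x))$ the action of $\pi_1(K_n,x)$ on $p^{-1}(x)$ remains amenable (an invariant mean for the image group is, via the pullback of the action, an invariant mean for the larger group). This says exactly that $p_n$ is amenable for every $n$. For the converse, suppose each $p_n$ is amenable and fix a finite $S\subseteq\pi_1(M_0,x)$. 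Choose $N$ with $S\subseteq\iota_{N,*}(\pi_1(K_N,x))$, and let $\tilde S\subseteq\pi_1(K_N,x)$ be a finite set with $\iota_{N,*}(\tilde S)=S$. Amenability of $p_N$ gives, via \cref{Folner}\eqref{amen4} applied to $\tilde S$, sequences of $\pi_1(K_N,x)$-orbits $X_j\subseteq p^{-1}(x)$ and finite non-empty $E_j\subseteq X_j$ with $|\partial_{\tilde S}E_j|/|E_j|\to0$. Because the $\pi_1(K_N,x)$-action on $p^{-1}(x)$ is the restriction of the $\pi_1(M_0,x)$-action and $\iota_{N,*}(\tilde S)=S$, we have $\partial_{\tilde S}E_j=\partial_S E_j$ computed with respect to the $\pi_1(M_0,x)$-action, and each $X_j$ is contained in a single $\pi_1(M_0,x)$-orbit. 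Thus the same $E_j$ witness F\o{}lner's condition \eqref{amen4} for $S$ in the $\pi_1(M_0,x)$-action, and since $S$ was arbitrary, \cref{Folner} yields that $p$ is amenable.

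The one genuinely nontrivial ingredient — the ``main obstacle'' — is the identification of the $\pi_1(K_n,x)$-monodromy action on $p_n^{-1}(x)$ with the $\iota_{n,*}$-restriction of the $\pi_1(M_0,x)$-monodromy action on $p^{-1}(x)$, together with the fact that every compact path in $M_0$ is eventually supported in some $K_n$. The latter is immediate from the exhaustion property (a loop has compact image). The former is a matter of unwinding definitions: a loop $c$ in $K_n$ based at $x$ lifts to a path in $p_n^{-1}(K_n)=p^{-1}(K_n)$ starting at a given $y\in p^{-1}(x)$, and this lift is the restriction of the lift of $c$, viewed as a loop in $M_0$, to $M_1$; hence the two actions agree on $\iota_{n,*}(\pi_1(K_n,x))$. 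A small point worth noting is that $p^{-1}(K_n)$ need not be connected, but this causes no difficulty: \cref{Folner}\eqref{amen4} is phrased in terms of orbits, and orbits of the $\pi_1(K_n,x)$-action on $p^{-1}(x)$ correspond to connected components of $p^{-1}(K_n)$, so working orbit by orbit is exactly what is needed. With these observations the proof reduces to the bookkeeping above.
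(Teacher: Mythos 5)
Your proof is correct and takes essentially the same approach as the paper's: both rest on identifying the monodromy action of $\pi_1(K_n,x)$ on $p^{-1}(x)$ with the restriction, via the inclusion-induced map, of the $\pi_1(M_0,x)$-action, and then apply the F\o{}lner criterion one finite subset at a time, using that every finite $S\subseteq\pi_1(M_0,x)$ is supported in some $K_N$. The only cosmetic differences are that you phrase the forward direction through invariant means pulled back along a surjection rather than simply citing F\o{}lner's criterion, and you invoke item (4) of the F\o{}lner theorem where item (3) would already suffice.
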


\begin{proof}
Fix a base point $x \in M_{0}$ and consider a compact domain $K$ in $M_0$ with smooth boundary
containing $x$ in its interior.
It is immediate to verify that the monodromy action of $p\colon p^{-1}(K) \to K$
coincides with the monodromy action of $i_{*}\pi_{1}(K)$ on $p^{-1}(x)$,
where $i \colon K \to M_{0}$ stands for the inclusion.
It follows now from \cref{Folner} that if $p \colon M_{1} \to M_{1}$ is amenable,
then any restriction $p \colon p^{-1}(K) \to K$ is amenable.

Conversely, let $S$ be a finite subset of $\pi_{1}(M_{0})$ and, for any $g \in S$,
consider a representative loop $c_{g}$.
Since $S$ is finite,
there exists $n \in \mathbb{N}$ such that the images of all these loops are contained in $K_{n}$.
Since the restriction $p \colon p^{-1}(K_{n}) \to K_{n}$ is amenable,
we obtain from \cref{Folner} that there exist finite subsets $E_{n}$ of $p^{-1}(x)$
with $|\partial_{S}E_{n}|/|E_{n}| \rightarrow 0$.
We conclude from \cref{Folner} that $p$ is amenable, $S$ being arbitrary.
\end{proof}

\cref{amecom} illustrates the importance of considering non-connected covering spaces.
Namely, the preimage $p^{-1}(K)$ of a compact domain $K$ in $M_0$ with smooth boundary
may not be connected even if $M_1$ is.

\section{Monotonicity of $\lambda_0$}
\label{secele}

Different versions of the following result can be found in the literature;
see e.g. \cite[Theorem 7 and Corollary 1 of Theorem 4]{CY},
\cite[Theorem 1 and Corollary 1]{FS},
\cite[Theorem 1.2]{MP}, or \cite[Theorem 2.1]{Su}.

\begin{thm}\label{poslam}
Let $S$ be a Schr\"odinger operator on a Riemannian manifold $M$.
Then $\lambda_0(S,M)$ is the maximal $\lambda\in\R$
such that the equation $Sf=\lambda f$ has a positive solution.
\end{thm}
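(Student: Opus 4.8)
The plan is to prove the two inequalities separately: first that $\lambda_0(S,M)$ is attained as an eigenvalue-type bound by \emph{some} positive solution, and second that no $\lambda > \lambda_0(S,M)$ admits a positive solution.

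\textbf{Existence of a positive $\lambda_0$-harmonic function.} Set $\lambda_0 = \lambda_0(S,M)$. The standard approach is an exhaustion argument. Fix an exhaustion $\Omega_1 \subseteq \Omega_2 \subseteq \cdots$ of $M$ by connected, relatively compact open subsets with smooth boundary whose union is $M$, and fix a base point $x_0 \in \Omega_1$. On each $\Omega_n$, the operator $S$ with Dirichlet boundary conditions has discrete spectrum, and its bottom eigenvalue $\lambda_0(S,\Omega_n)$ admits a positive eigenfunction $u_n$ (positivity of the ground state on a connected domain). Normalize so that $u_n(x_0) = 1$. By the variational characterization \eqref{bottspec}, $\lambda_0(S,\Omega_n)$ is decreasing in $n$ with $\lambda_0(S,\Omega_n) \downarrow \lambda_0(S,M)$ (using that Lipschitz functions of compact support are exhausted by those supported in some $\Omega_n$). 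Now apply the Harnack inequality for the family of elliptic operators $S - \lambda_0(S,\Omega_n)$ on a fixed compact $K \subseteq M$: since the $\lambda_0(S,\Omega_n)$ lie in a bounded set, the Harnack constants are uniform in $n$, so $\sup_K u_n \le C_K u_n(x_0) = C_K$ for all large $n$. Elliptic estimates then give $C^{2,\alpha}_{\mathrm{loc}}$ bounds, and a diagonal subsequence converges in $C^2_{\mathrm{loc}}$ to a function $\vf \ge 0$ with $\vf(x_0) = 1$, solving $S\vf = \lambda_0 \vf$. By the strong maximum principle (or Harnack again), $\vf > 0$ everywhere. This yields a positive $\lambda_0$-harmonic function.

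\textbf{No positive solution for $\lambda > \lambda_0$.} Suppose $\vf > 0$ solves $S\vf = \lambda\vf$ for some $\lambda \in \R$; I claim $\lambda \le \lambda_0(S,M)$. This is exactly the content of the renormalization in \cref{renor}: writing the potential of $S$ as $V = (V - \Delta\vf/\vf) + \Delta\vf/\vf$ and using that $S\vf = \lambda\vf$ means $V - \lambda = -\Delta\vf/\vf$, the operator $S - \lambda$ is intertwined by $m_\vf$ with the diffusion operator $L = \Delta - 2\grad\ln\vf$, which is formally self-adjoint and non-negative with respect to the weighted measure $\mu = \vf^2\dv$. Concretely, for any Lipschitz $f \ne 0$ with compact support, putting $g = f/\vf$ and using integration by parts (as in the computation following \cref{renor}, or directly Green's identity),
\begin{align*}
  \int_M \bigl(|\grad f|^2 + (V-\lambda)f^2\bigr)
  = \int_M |\grad g|^2 \, \vf^2 \ge 0.
\end{align*}
Hence $R_S(f) \ge \lambda$ for all such $f$, and taking the infimum over $f$ gives $\lambda_0(S,M) \ge \lambda$ by \eqref{bottspec}. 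Combining the two parts, $\lambda_0(S,M)$ is a value of $\lambda$ for which a positive solution exists, and it is the largest such value, which is the assertion.

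\textbf{Main obstacle.} The delicate point is the existence half, specifically extracting a \emph{positive} limit $\vf$ rather than one that might vanish somewhere or degenerate. The uniform Harnack inequality is what rescues this: it simultaneously provides the local upper bounds (via $u_n(x_0)=1$) needed for the compactness argument and, in the limit, the lower bound forcing $\vf > 0$ on all of $M$ (a connected manifold). One must be slightly careful that $M$ itself need not be complete, but since $S$ is bounded below on $C^\infty_c(M)$ and we work throughout with the Friedrichs extension, the exhaustion by relatively compact smooth domains together with interior elliptic regularity is insensitive to incompleteness; all estimates are local and the convergence $\lambda_0(S,\Omega_n) \to \lambda_0(S,M)$ is immediate from the definition \eqref{bottspec} of the bottom of the spectrum as an infimum of Rayleigh quotients over compactly supported functions.
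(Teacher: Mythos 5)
The paper does not prove \cref{poslam} itself; it cites the literature (Cheng--Yau, Fischer-Colbrie--Schoen, Moss--Piepenbrink, Sullivan). Your argument is correct and is essentially the canonical proof in those references: the exhaustion-plus-Harnack compactness argument for the existence of a positive $\lambda_0$-harmonic function, and the renormalization identity $R_S(f)-\lambda=\int|\grad(f/\vf)|^2\vf^2/\int f^2$ for the upper bound on $\lambda$, matching the paper's own use of \eqref{bottom2}.
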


Returning to our standard setup of a Riemannian covering $p \colon M_{1} \to M_{0}$
with compatible Schr\"odinger operators $S_1$ and $S_0$,
we obtain a

\begin{proof}[Proof of \cref{monot}]
If $f_0$ is a positive function on $M_0$ solving $S_0f_0=\lambda f_0$,
then the positive function $f=f_0\circ p$ on $M_1$ solves $S_1f=\lambda f$.
\end{proof}

In \cite[Theorem 1.3]{BMP1},
we obtain \cref{monot} by an elementary argument which does not rely on \cref{poslam}.

\begin{proof}[Another proof of \cref{monot}]
Given any compactly supported Lipschitz function $f$ on $M_1$, its \emph{pushdown}
\begin{align}\label{pushd}
  f_0(x) = \left( \sum\nolimits_{y\in p^{-1}(x)}|f(y)|^2 \right)^{1/2}
\end{align}
is a Lipschitz function on $M_0$.
A straightforward calculation shows that the Rayleigh quotients satisfy $R_{S_{0}}(f_0)\le R_{S_{1}}(f)$.
Now \cref{monot} follows from the characterization of the bottom of the spectrum of Schr\"odinger operators
by Rayleigh quotients.
\end{proof}

\begin{rem}
It is easy to construct examples of Riemannian coverings $p\colon M_1\to M_0$,
where $\lambda_{\ess}(M_1)<\lambda_{\ess}(M_0)$,
contrary to the monotonicity of the bottom of the spectrum.
\end{rem}

\section{Amenability implies equality!}
\label{secame}

In this section, we review the history of \cref{tame}.
We start with Theorem 1 of Brooks in \cite{Br2} which extends, with almost identical proof,
the corresponding (half) of his \cite[Theorem 1]{Br1}.

Somewhat informally, Brooks defines a manifold to be of \emph{finite topological type}
if it is topologically the union of finitely many simplices \cite{Br2}.
For example, a surface is of finite topological type if it is of finite type in the usual sense,
that is, if it is diffeomorphic to a closed surface with finitely many punctures.

\begin{thm}[Brooks, Theorem 1 in \cite{Br2}]\label{Brooks2}
Suppose that $p$ is normal and that $M_{0}$ is of finite topological type and complete.
If $\Gamma$ is amenable,
then $\lambda_{0}(M_{1}) = \lambda_{0}(M_{0})$.
\end{thm}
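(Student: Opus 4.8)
The plan is to combine the monotonicity inequality of \cref{monot}, which already gives $\lambda_0(M_1)\ge\lambda_0(M_0)$, with an explicit construction of test functions on $M_1$ whose Rayleigh quotients tend to $\lambda_0(M_0)$. By \cref{poslam}, fix a positive function $\varphi_0$ on $M_0$ with $\Delta\varphi_0=\lambda_0(M_0)\varphi_0$; its lift $\varphi=\varphi_0\circ p$ is a positive $\lambda_0(M_0)$-harmonic function on $M_1$, is $\Gamma$-invariant, and the weighted measure $\mu=\varphi^2\dv$ on $M_1$ descends to $\mu_0=\varphi_0^2\dv$ on $M_0$. Since \cref{monot} lets us renormalize $M_1$ with $\varphi$ at the level $\lambda_0(M_0)$, \eqref{bottom2} gives
\begin{align*}
  \lambda_0(M_1)-\lambda_0(M_0)=\inf\frac{\int_{M_1}|\grad f|^2\,d\mu}{\int_{M_1}f^2\,d\mu},
\end{align*}
the infimum being over nonzero compactly supported Lipschitz functions $f$ on $M_1$, so it remains to show that this infimum is $0$.

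For the construction I would use two ingredients. First, exploiting that $M_0$ has finite topological type and is complete, one constructs a fundamental domain $F$ for the action of $\Gamma$ on $M_1$ such that the set $S=\{g\in\Gamma\mid g\bar F\cap\bar F\ne\emptyset\}$ is finite; it is symmetric and generates $\Gamma$, the group $\Gamma$ being finitely generated as a quotient of $\pi_1(M_0)$, and $p$ maps $\bar F$ properly onto $M_0$, so that $\bar F\cap p^{-1}(K)$ is compact for every compact $K\subseteq M_0$. Second, since $\lambda_0(M_0)-\lambda_0(M_0)=0$ is likewise given by \eqref{bottom2} as an infimum of $\mu_0$-Rayleigh quotients on $M_0$, pick $\chi_n\in C^\infty_c(M_0)$ with $\int_{M_0}\chi_n^2\,d\mu_0=1$ and $\delta_n:=\int_{M_0}|\grad\chi_n|^2\,d\mu_0\to0$, together with a Lipschitz cutoff $u_n$ on $M_1$ that equals $1$ on an interior piece of $F$ lying over $\supp\chi_n$, vanishes off $F$ and its finitely many neighbours, and has some finite (a priori $n$-dependent) Lipschitz constant $C_n$, finiteness being guaranteed by properness of $p|_{\bar F}$. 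Finally, for each $n$, amenability of $\Gamma$ and \cref{Folner} (or \cref{Folner2}) furnish a finite nonempty $E_n\subseteq\Gamma$ with $|\partial_SE_n|/|E_n|$ as small as we wish, and we set
\begin{align*}
  f_n=(\chi_n\circ p)\cdot\sum_{g\in E_n}g^*u_n,
\end{align*}
a nonzero compactly supported Lipschitz function on $M_1$.

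To estimate the Rayleigh quotient of $f_n$ I would split $|\grad f_n|^2$ into the contribution of $\grad(\chi_n\circ p)$ and that of the cutoff. By $\Gamma$-invariance of $\mu$ and since $F$ is a fundamental domain, the integral of $|\grad(\chi_n\circ p)|^2\,d\mu$ over each translate $g\bar F$ equals $\delta_n$ and that of $(\chi_n\circ p)^2\,d\mu$ equals $1$; hence the first contribution is bounded by a constant depending only on $S$ times $|E_n|\,\delta_n$, and the second, supported over the translates indexed by the F\o{}lner boundary, by a constant times $C_n^2\,|\partial_SE_n|$. On the other hand $f_n^2$ equals $(\chi_n\circ p)^2$ on the translates indexed by the F\o{}lner interior of $E_n$, so the denominator is at least a constant times $|E_n|$. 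Thus the Rayleigh quotient of $f_n$ is at most a constant times $\delta_n+C_n^2\,|\partial_SE_n|/|E_n|$; choosing $n$ large first, so that $\delta_n$ is small, and only afterwards $E_n$ with $|\partial_SE_n|/|E_n|$ small enough to overcome the now-fixed $C_n$, we obtain test functions of arbitrarily small Rayleigh quotient, whence the infimum above is $0$ and $\lambda_0(M_1)=\lambda_0(M_0)$.

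The main obstacle is precisely the interaction of the non-compactness of $M_0$ with the cutoff: near the ends of $M_0$ the fundamental domain $F$ becomes thin, so a cutoff adapted to the ever-larger sets $\supp\chi_n$ need not have Lipschitz constant bounded in $n$. This dictates the order of the two limits --- the F\o{}lner set is chosen after $n$, so that for each fixed $n$ the ``finitely many sheets'' error term $C_n^2\,|\partial_SE_n|/|E_n|$ can still be made negligible --- and it is here that completeness is used to keep the cutoff under control on the relevant compact pieces, while finite topological type is what guarantees a fundamental domain with only finitely many neighbours, so that $S$ is finite and \cref{Folner} applies.
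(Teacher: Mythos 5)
Your overall strategy is the same as Brooks's and as the paper's sketch: monotonicity handles $\lambda_0(M_1)\ge\lambda_0(M_0)$, renormalize with the lift of a positive $\lambda_0(M_0)$-harmonic function, produce a F\o{}lner sequence $E_n\subseteq\Gamma$ adapted to the finite generating set $S$ coming from the finite-sided fundamental domain, and cut off lifts of near-optimal test functions from $M_0$; the careful ordering of limits (fix $\chi_n$ and hence $C_n$ first, then choose the F\o{}lner set afterwards) is exactly the right way to deal with the $n$-dependent Lipschitz constant of the cutoff.

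There is, however, a genuine gap in the cutoff step. You set $f_n=(\chi_n\circ p)\cdot\sum_{g\in E_n}g^*u_n$ and then claim (i) that the term coming from $\bigl|\grad\sum_{g\in E_n}g^*u_n\bigr|^2$ is supported only on translates indexed by the F\o{}lner boundary $\partial_S E_n$, and (ii) that $f_n^2=(\chi_n\circ p)^2$ on translates indexed by the F\o{}lner interior. Both claims require that the full family $\{g^*u_n\}_{g\in\Gamma}$ sums to a constant (say to $1$) over $p^{-1}(\supp\chi_n)$. A single ``Lipschitz cutoff $u_n$ equal to $1$ on an interior piece of $F$ and vanishing off $F$ and its neighbours'' does not give this: for two interior neighbours $h,hs\in E_n$ there is no reason for $h^*u_n+(hs)^*u_n$ to be constant near the common wall, so $\grad\sum_{g\in E_n}g^*u_n$ is in general nonzero on \emph{all} of $F_n=\cup_{g\in E_n}gF$, not just near its F\o{}lner boundary. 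In that case the numerator contribution you want to bound by $C_n^2|\partial_S E_n|$ is really of size $C_n^2|E_n|$, and the Rayleigh quotient no longer tends to $0$. Likewise, without the constant-sum property, $\sum_{g\in E_n}g^*u_n$ can be strictly less than $1$ on the part of each $h\bar F$ near $\partial(hF)$, so the lower bound on the denominator is not justified either. The fix is to make $\{g^*u_n\}_{g\in\Gamma}$ an honest $\Gamma$-equivariant Lipschitz partition of unity over $p^{-1}(\supp\chi_n)$ (the device used in the paper for the proof of \cref{tame}), or to replace the sum $\sum_{g\in E_n}g^*u_n$ by Brooks's single distance-cutoff $\chi_n^\rho$ adapted to $F_n=\cup_{g\in E_n}gF$, as in the paper's sketch; either way you must then re-justify the claim that the region where the cutoff has nonzero gradient only meets F\o{}lner-boundary translates over $\supp\chi_n$, which is exactly the place where Brooks's argument is delicate.
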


The point of assuming topological finiteness of $M_0$ is that the covering $p$
admits a fundamental domain $F$ with finitely many sides.
The set
\begin{align}\label{genset}
	S = \{ s \in \Gamma \mid \text{$F$ and $sF$ meet along a codimension one face}\}
\end{align}
is then a finite and symmetric generating set of $\Gamma$.

\begin{proof}[Sketch of the proof of \cref{Brooks2} after \cite{Br2}]
In view of \cref{monot},
it is sufficient to construct compactly supported Lipschitz functions on $M_1$
with Rayleigh quotients at most $\lambda_0(M_0)+\ve$, for any $\ve>0$.
Now amenability of $\Gamma$ implies that there exists a sequence of finite subsets $E_n$ of $\Gamma$
such that $|\partial_SE_n|/|E_n|\to0$ as $n\to\infty$; see \cref{Folner2}.\ref{amen2f}.
Then setting $F_n=\cup_{g\in E_n}gF\subseteq M_1$,
Brooks uses the family of compactly supported functions
\begin{align*}
	\chi_n^\rho = \chi_n^\rho(x) =
	\begin{cases}
	1 &\text{if $x\in F_n$,} \\
	0 &\text{if $d(x,F_n)>\rho$,} \\
	1-d(x,F_n)/\rho &\text{otherwise,}
	\end{cases}
\end{align*}
to cut off (somewhat specific) lifts of functions from $C^\infty_c(M_0)$ to $M_1$.
He concludes the proof by estimating the Rayleigh quotients
of the resulting compactly supported Lipschitz functions on $M_1$ in terms of $|\partial_SE_n|/|E_n|$.  
\end{proof}

\begin{thm}[Ji, Li, \& Wang, Theorem 5.1 in \cite{JLW}]\label{JLWa}
Suppose that $p$ is normal and that $M_0$ is complete with Ricci curvature bounded from below.
Suppose furthermore that the volumes of geodesic balls in $M_1$ of radius one satisfy estimates
\begin{equation*}
  |B(x,1)| \ge C_\ve e^{-\ve r(x)}
\end{equation*}
for any $\ve>0$, where $r=r(x)$ denotes the distance to some origin in $M_1$.
If $\Gamma$ is amenable, then $\lambda_{0}(M_{1}) = \lambda_{0}(M_{0})$.
\end{thm}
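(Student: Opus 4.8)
The plan is to mimic Brooks's argument from the sketch of \cref{Brooks2}, replacing the combinatorial fundamental domain with a metric substitute. By \cref{monot} it suffices, for every $\ve>0$, to produce a nonzero compactly supported Lipschitz function on $M_1$ whose Rayleigh quotient is at most $\lambda_0(M_0)+\ve$. Fix a base point $x_0\in M_0$ and a lift $\tilde x_0\in M_1$; write $\Gamma$ for the deck group and identify the fiber $p^{-1}(x_0)$ with $\Gamma$. Since $M_0$ is complete with Ricci curvature bounded below, $\pi_1(M_0)$ is finitely generated; fix a finite symmetric generating set $S$ and let $R>0$ be large enough that loops representing the elements of $S$ all lie in $B_{M_0}(x_0,R)$. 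Amenability of $\Gamma$ gives, by \cref{Folner2}.\ref{amen2f}, finite nonempty $E_n\subseteq\Gamma$ with $|\partial_S E_n|/|E_n|\to0$. First I would fix a nonzero $u\in C^\infty_c(M_0)$ with $R_{\Delta,M_0}(u)\le\lambda_0(M_0)+\ve/2$, enlarge $R$ so that $\supp u\subseteq B_{M_0}(x_0,R)$, and set, for each deck transformation $g$, the bump $u_g = (u\circ p)\cdot \psi_g$ on $M_1$, where $\psi_g$ is a cutoff localizing near the lift of $\supp u$ based at $g\tilde x_0$; more simply, let $v=u\circ p$ and consider the "thickened $E_n$" region and a single lift of $u$ spread over it.

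The cleanest route is as follows. Let $D\subseteq M_0$ be a compact domain with smooth boundary containing $B_{M_0}(x_0,R)$, so $p^{-1}(D)$ has components indexed by the cosets on which $\pi_1(D)\to\pi_1(M_0)$ acts; after possibly enlarging $D$ so that $\pi_1(D)\twoheadrightarrow\pi_1(M_0)$ (finite generation), $p^{-1}(D)$ has components $\{D_g\}_{g\in\Gamma}$ permuted freely by $\Gamma$, with $D_g$ and $D_{g'}$ adjacent only if $g^{-1}g'\in S$. Put $F_n=\bigcup_{g\in E_n}D_g$ and let $\rho>0$; define $\chi_n^\rho$ exactly as in Brooks's sketch, equal to $1$ on $F_n$, $0$ outside the $\rho$-neighborhood, interpolating linearly. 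Then take the trial function $f_n = \chi_n^\rho\cdot (u\circ p)$ — here I need $u$ arranged so that $u\circ p$ restricted to each $D_g$ is a fixed copy of $u$, i.e. I use a single function $u$ on $D$ whose support sits well inside $D$ and lift it componentwise, so that on the bulk $F_n^{\mathrm{int}}=\bigcup_{g\in E_n\setminus\partial_S E_n}D_g$ the function $f_n$ agrees with the $\Gamma$-translated copies of $u$.

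The estimate of $R_{\Delta,M_1}(f_n)$ then splits the manifold into: the bulk, where $f_n$ is literally $|E_n\setminus\partial_S E_n|$ disjoint isometric copies of $u$, contributing exactly the ratio $R_{\Delta,M_0}(u)\le\lambda_0(M_0)+\ve/2$; the boundary shell coming from the $D_g$ with $g\in\partial_S E_n$ together with the cutoff collar $\{0<d(\cdot,F_n)\le\rho\}$, where $|\grad f_n|$ is controlled by $\|u\|_\infty/\rho + \|\grad u\|_\infty$ and the volume is bounded by $C(\rho)\,|\partial_S E_n|$ using that the components $D_g$ all have the same, finite volume and that geodesic balls in $M_1$ of bounded radius have bounded volume (a Bishop-type consequence of the Ricci lower bound, which lifts to $M_1$). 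Dividing, the boundary terms are $\le C(\rho)\,|\partial_S E_n|/(c\,|E_n|)\to0$ as $n\to\infty$, while the normalization $\int_{M_1}f_n^2 \ge c\,|E_n\setminus\partial_S E_n|$ stays comparable to $|E_n|$. Hence $\limsup_n R_{\Delta,M_1}(f_n)\le\lambda_0(M_0)+\ve/2<\lambda_0(M_0)+\ve$, and combined with \cref{monot} this yields equality. For a Schrödinger operator $S_0=\Delta+V_0$ one repeats the argument verbatim: $V_0\circ p$ is bounded on the compact set $\overline{F_n^\rho}$ in the same uniform way (it is $\Gamma$-periodic), so the potential contributes the exact same bulk ratio $R_{S_0,M_0}(u)$ plus a boundary error of the same order.

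The main obstacle is making the geometric "fundamental domain" bookkeeping rigorous without topological finiteness of $M_0$: one must choose $D$ so that $p^{-1}(D)$ has components freely permuted by $\Gamma$ with adjacency governed by a finite symmetric set $S$, and simultaneously know $S$ generates $\Gamma$ — this is where finite generation of $\pi_1(M_0)$ (from the Ricci lower bound, via Milnor's argument) and \cref{amecom} enter. The second delicate point is the volume bound $|B_{M_1}(x,\rho)|\le C(\rho)$ uniform in $x\in M_1$: this follows from the Ricci lower bound on $M_0$, which pulls back to the same lower bound on $M_1$, plus Bishop–Gromov; note we do not need the lower volume hypothesis of \cref{JLWa} here because we are not trying to compare with balls of growing radius, only to bound the fixed-width collar. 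Everything else is the routine Rayleigh-quotient estimate already carried out by Brooks.
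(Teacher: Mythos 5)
Your strategy is genuinely different from the proof of \cref{JLWa} by Ji, Li, and Wang, who use Green's function estimates to construct a bounded positive function $u$ on $M_1$ solving $\Delta u \ge (\lambda_0(M_1)-\ve)u$ and then push it down to $M_0$ by averaging against an invariant mean for $\Gamma$; you instead try to adapt Brooks's cut-off argument from \cref{Brooks2}, which is closer in spirit to the later refinement \cref{tame} of \cite{BMP1}. However, the geometric bookkeeping in your proposal has two independent gaps.

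First, completeness plus a lower Ricci bound on $M_0$ does \emph{not} imply that $\pi_1(M_0)$ is finitely generated; Milnor's growth argument needs compactness. Any surface of infinite type carries a complete metric of constant curvature $-1$, with Ricci $\equiv -1$ bounded below but $\pi_1$ infinitely generated, so your finite symmetric generating set $S$ need not exist. (This is precisely the obstruction that \cref{amecom} and the partition-of-unity proof of \cref{tame} are designed to circumvent.) Second, the compact domain $D$ you posit cannot exist. For a normal covering, the components of $p^{-1}(D)$ are in bijection with $\Gamma/H$, where $H$ is the image of $\pi_1(D)\to\Gamma$. If you enlarge $D$ so that $\pi_1(D)\to\pi_1(M_0)$ is surjective, then $H=\Gamma$ and $p^{-1}(D)$ is \emph{connected}, not a disjoint union indexed by $\Gamma$. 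If instead $p^{-1}(D)$ decomposes into $|\Gamma|$ disjoint diffeomorphic lifts $D_g$, then $H$ is trivial, so every loop in $D$ lifts to a loop in $M_1$, and hence $D$ cannot contain a loop representing any nontrivial element of $\Gamma$ --- exactly what you wanted $D$ to contain. You are asking $D$ to serve simultaneously as a tiling domain (one lift per deck element, disjoint translates) and as a topologically exhaustive compact set, and no single compact subset of $M_0$ can play both roles. Brooks resolves this tension by working with an actual fundamental domain, which is where the finite-topological-type assumption enters; \cite{BMP1} avoids translate combinatorics entirely by building a Lipschitz partition of unity $\{\vf_y\}_{y\in p^{-1}(x)}$ on $M_1$ with uniform Lipschitz constants. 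Repairing your argument would essentially mean redoing that construction, which proves the stronger \cref{tame}.
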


\begin{proof}[About the proof of \cref{JLWa} by Ji, Li, and Wang]
They use estimates on the Green's function of $\Delta+(\lambda_0(M_1)-\ve)$
to construct a bounded positive function $u$ on $M_1$ which solves $\Delta u\ge(\lambda_0(M_1)-\ve)u$.
Then they use a mean for $\Gamma$ to push $u$ down to a function $v$ on $M_0$
which solves $\Delta v\ge(\lambda_0(M_1)-\ve)v$.
\end{proof}

\begin{thm}[B\'erard \& Castillon, Theorem 1.1 in \cite{BC}]\label{BeCa}
If $M_0$ is complete and $p$ is amenable, then $\lambda_0(S_1,M_1)=\lambda_0(S_0,M_0)$.
\end{thm}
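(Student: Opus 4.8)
The plan is to reduce to the monotonicity inequality of \cref{monot} and produce, for every $\ve>0$, a compactly supported Lipschitz test function on $M_1$ whose Rayleigh quotient with respect to $S_1$ is at most $\lambda_0(S_0,M_0)+\ve$. By \eqref{bottspec} this yields $\lambda_0(S_1,M_1)\le\lambda_0(S_0,M_0)$, and combined with \cref{monot} gives equality. First I would renormalize: by \cref{poslam} there is a positive $\lambda_0$-harmonic function $\vf_0$ on $M_0$ for $S_0$, where $\lambda_0=\lambda_0(S_0,M_0)$; pulling it back to $\vf_1=\vf_0\circ p$ on $M_1$ (which is positive and $\lambda_0$-harmonic for $S_1$), \cref{renor} and \eqref{bottom2} replace the problem by the statement that, with respect to the weighted measures $\mu_i=\vf_i^2\dv$ and the corresponding diffusion operators, $h_{\vf_1}(M_1)=0$ as soon as $h_{\vf_0}(M_0)=0$. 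In other words, after renormalizing, what must be shown is that a Dirichlet-energy infimum equal to zero downstairs forces the same upstairs; this is exactly where amenability of the monodromy action enters.

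Next I would localize using \cref{amecom}. Fix a base point $x\in M_0$ and an exhaustion $K_1\subseteq K_2\subseteq\cdots$ of $M_0$ by compact domains with smooth boundary. Because $M_0$ is complete, for each $n$ one can find a Lipschitz function $f_n$ on $M_0$ supported in a compact set, with $\mu_0$-Rayleigh quotient (for the renormalized operator, i.e. the weighted Dirichlet quotient in \eqref{bottom2}) at most $\lambda_0-\lambda_0+\ve_n=\ve_n$ with $\ve_n\to0$, and after enlarging the index we may assume $\supp f_n\subseteq\inte K_n$. The monodromy action of $i_*\pi_1(K_n,x)$ on $p^{-1}(x)$ is amenable (it is a restriction of the amenable monodromy action of $p$), so by \cref{Folner} — applied to the finite symmetric generating set of $i_*\pi_1(K_n)$ coming from a fundamental domain of the restricted covering $p\colon p^{-1}(K_n)\to K_n$ — there are finite non-empty subsets $E\subseteq p^{-1}(x)$ with $|\partial_S E|/|E|$ as small as we like. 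I would then assemble a test function on $M_1$ by summing the lifts of $f_n$ over the sheets of $p^{-1}(K_n)$ indexed by $E$: concretely, take $F=\sum_{\sigma}f_n\circ(p|_\sigma)^{-1}$ over the sheets $\sigma$ meeting the points of $E$, extended by zero. The support is compact, and the key point is that the numerator and denominator of the weighted Rayleigh quotient of $F$ are, up to the boundary contribution controlled by $|\partial_S E|/|E|$, equal to $|E|$ times those of $f_n$ on $M_0$. Letting $|\partial_S E|/|E|\to0$ first and then $n\to\infty$ drives the Rayleigh quotient of $F$ (for the renormalized $S_1$) to $0$, i.e. the Rayleigh quotient for $S_1$ itself to $\lambda_0$.

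The main obstacle is the bookkeeping in that last step: one must verify that gluing the lifted pieces $f_n\circ(p|_\sigma)^{-1}$ across adjacent sheets does not create uncontrolled gradient energy, and that the only discrepancy between $R_{S_1}(F)$ and $R_{S_0}(f_n)$ comes from sheets that are "on the boundary" of the F\o lner set $E$ — precisely those sheets $\sigma$ for which some $g\in S$ moves the corresponding point of $p^{-1}(x)$ out of $E$. This is where the combinatorial F\o lner estimate must be matched to the geometric overlap of fundamental-domain translates; the weight $\vf_1=\vf_0\circ p$ is constant along fibers, so the $\vf$-volumes of the lifted pieces are literally equal to those downstairs, which is what makes the matching clean. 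I expect the estimate to take the schematic form
\begin{align*}
  R_{S_1}(F)-\lambda_0 \;\le\; R_{S_0}(f_n)-\lambda_0 \;+\; C(f_n)\,\frac{|\partial_S E|}{|E|},
\end{align*}
with $C(f_n)$ depending only on $f_n$ (through $\|f_n\|_\infty$, $\operatorname{Lip}(f_n)$, and $\sup_{K_n}\vf_0$), after which choosing $E$ with $|\partial_S E|/|E|$ small enough finishes the argument. Everything else — essential self-adjointness issues, the incomplete-$M_1$ subtleties — is avoided because we work only with compactly supported Lipschitz test functions and the variational formula \eqref{bottspec}.
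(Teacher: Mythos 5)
Your high-level strategy --- prove $\lambda_0(S_1,M_1)\le\lambda_0(S_0,M_0)$ by exhibiting compactly supported Lipschitz test functions with near-optimal Rayleigh quotient, renormalize via a positive $\lambda_0(S_0,M_0)$-harmonic function as in \cref{renor} and \eqref{bottom2}, localize over a compact exhaustion using \cref{amecom}, and apply a F\o{}lner estimate for the restricted monodromy action --- is in line with the Brooks-style route and the refinement in \cite{BMP1}, and it cleanly sidesteps the finite-generation hypothesis that B\'erard--Castillon use implicitly.

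The genuine gap is precisely at the step you flag as the "main obstacle," and it is not bookkeeping: the function $F=\sum_\sigma f_n\circ(p|_\sigma)^{-1}$, summed over "sheets $\sigma$ meeting the points of $E$" and extended by zero, is generically discontinuous, hence not Lipschitz and not an admissible test function at all. What you describe is the restriction of the smooth lift $\tilde f_n=f_n\circ p$ to the union of the sheets over $E$, set to zero elsewhere; along the interfaces between a chosen sheet and an unchosen one --- which lie in the interior of $p^{-1}(K_n)$, so the normalization $\supp f_n\subseteq\inte K_n$ does not help --- $F$ jumps from a nonzero value of $\tilde f_n$ to $0$, so its distributional gradient has a singular part and $R_{S_1}(F)$ is not finite. (There is also the conceptual issue that for a non-normal covering there is no canonical decomposition of $p^{-1}(K_n)$ into "sheets" in the first place.) The missing ingredient is a Lipschitz cut-off: Brooks replaces the indicator of $\bigcup_{g\in E}gF$ by a distance-based Lipschitz function $\chi^\rho$, and \cite{BMP1} replaces the indicator of the sheets over $E$ by $\chi_E=\sum_{y\in E}\varphi_y$, built from a Lipschitz partition of unity on $M_1$ consisting of a function $\varphi_1$ and functions $\varphi_y$ for $y\in p^{-1}(x)$, with $\varphi_1=0$ on $p^{-1}(B(x,r))$, $\supp\varphi_y\subseteq B(y,r+1)$, and a Lipschitz constant for $\varphi_y$ independent of $y$. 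Only after such an interpolation is in place does the F\o{}lner ratio $|\partial_SE|/|E|$ control the $\mu$-mass of the transition region where $\grad\chi_E\ne 0$ and hence yield an estimate of the schematic form you write; constructing that partition of unity in full generality and matching its geometry to the monodromy action is the actual technical content of the theorem. The renormalization step you carry out is valid but optional for the inequality you need; the cut-off construction is not.
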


Note that B\'erard and Castillon assume implicitly also that the group of covering transformations
resp.\ the fundamental group of the base is finitely generated
\cite[Sections 3.1 and 3.2]{BC}.

\begin{proof}[About the proof of \cref{BeCa} by B\'erard and Castillon]
They consider the case of normal coverings first.
In that case, their arguments are close to the ones of Brooks.
In a second step, they explain how to adapt the arguments to the case of general coverings.
\end{proof}

Adopting cut-offs of lifts of functions more carefully to the different competitors for $\lambda_0(S_{0},M_{0})$ separately
is the main new point in the proof of \cref{tame} in \cite{BMP1}.

\begin{proof}[Sketch of the proof of \cref{tame} after \cite{BMP1}]
Given a non-zero $f_{0} \in C^{\infty}_{c}(M_{0})$ and $\varepsilon > 0$,
we want to construct a non-zero $f\in \text{Lip}_{c}(M_{1})$
with $R_{S_{1}}(f) \leq R_{S_{0}}(f_{0}) + \varepsilon$. 
	
Fix $x \in M_{0}$ and $r > 0$ such that $\supp f_{0} \subseteq B(x,r)$,
where we measure distances with respect to a complete background metric.
A main step of the proof is the construction of a Lipschitz partition of unity on $M_{1}$,
consisting of functions $\varphi_{1}$ and $\varphi_{y}$, with $y \in p^{-1}(x)$,
such that $\varphi_{1} = 0$ in $p^{-1}(B(x,r))$, $\supp \varphi_{y} \subseteq B(y,r+1)$
and such that the Lipschitz constants of $\varphi_{y}$ with $y \in p^{-1}(x)$ do not depend on $y$.
For a finite subset $E$ of $p^{-1}(x)$ consider the Lipschitz function
\begin{align*}
	\chi_{E} := \sum\nolimits_{y \in E} \varphi_{y}.
\end{align*}
It follows from \cref{Folner} that there exists a finite set $E \subseteq p^{-1}(x)$
such that the function $f:=\chi_{E}f_1$ satisfies the desired inequality,
where $f_1$ is the lift of $f_{0}$ to $M_{1}$.
\end{proof}

\section{Equality implies amenability?}
\label{secname}

In this section,
we consider the problem of finding conditions under which non-amenability of the covering $p\colon M_1\to M_0$ implies the strict inequality $\lambda_{0}(M_1)>\lambda_0(M_0)$.
In other words, we search for conditions under which the equality $\lambda_{0}(M_1)=\lambda_0(M_0)$
implies amenability of $p$.
In a first part, we survey the development which lead to \cref{name}
up to and including the proof of \cref{name} by the second author,
in a second part we present a new argument
which leads to a simplification of the proof of \cref{name}.
In view of the previous section,
the corresponding statements below will actually assert equivalence to amenability of the covering.
We hope that our outlines of the proofs of the various results and ideas
turn out to be useful in further research on the subject.

\subsection{On the development towards \cref{name}}
\label{susname}
Our review starts with the work of Brooks in the case of the universal covering
of a closed Riemannian manifold $M_0$.
Then $\lambda_0(M_0)=0$.

\begin{thm}[Brooks, Theorem 1 in \cite{Br1}]\label{Brooks1}
	Suppose that $p\colon M_1\to M_0$ is the universal covering and that $M_{0}$ is closed.
	Then $\lambda_{0}(M_{1})=0$ if and only if the fundamental group of $M_0$ is amenable.
\end{thm}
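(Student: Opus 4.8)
The plan is to prove the two implications separately, using the tools already assembled in the excerpt. One direction is essentially immediate: if $\pi_1(M_0)$ is amenable, then the universal covering $p$ is an amenable covering (for a normal covering, amenability of $p$ is by definition amenability of the covering group $\Gamma=\pi_1(M_0)$), so \cref{tame} applied to $S_0=\Delta$ gives $\lambda_0(M_1)=\lambda_0(M_0)=0$, using that $\lambda_0(M_0)=0$ because $M_0$ is closed and the constant function witnesses $\lambda_0(M_0)\le 0$ while $\Delta\ge 0$ gives the reverse.

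For the converse I would argue contrapositively: assume $\Gamma=\pi_1(M_0)$ is not amenable and show $\lambda_0(M_1)>0$. Since $M_0$ is closed it is of finite topological type and complete, and $\lambda_{\ess}(M_0)=\infty>0=\lambda_0(M_0)$ because $\sigma(M_0)$ is discrete. Hence \cref{name} (with $S_0=\Delta$) applies directly and yields $\lambda_0(M_1)>\lambda_0(M_0)=0$. Strictly speaking this deduces the theorem from the later, more general \cref{name}; since the problem statement allows me to assume results stated earlier in the excerpt and \cref{name} appears before this theorem in the paper's logical order, this is legitimate, but it would be more in the spirit of Brooks's original argument to sketch the self-contained proof.

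The self-contained route for the hard direction runs as follows. One shows that non-amenability of $\Gamma$ forces a positive Cheeger constant $h(M_1)>0$; then the Cheeger inequality \eqref{chein} gives $\lambda_0(M_1)\ge h(M_1)^2/4>0$. To get $h(M_1)>0$, pick a finite symmetric generating set $S$ of $\Gamma$ of the form \eqref{genset} coming from a fundamental domain $F$ with finitely many sides (available since $M_0$ is closed). By the F\o{}lner criterion \cref{Folner2}, non-amenability of $\Gamma$ means there is a constant $c>0$ with $|\partial_S E|\ge c|E|$ for every finite $E\subseteq\Gamma$. Any compact domain $D\subseteq M_1$ with smooth boundary is compared with a union $F_E=\bigcup_{g\in E}gF$ over the finite set $E=\{g\in\Gamma : gF\cap D\ne\emptyset\}$; because $M_0$ is closed, $\mathrm{vol}(gF)$ is a fixed constant $v_0$, the $(m-1)$-area of each codimension-one face of $gF$ is bounded by a fixed constant $a_0$, and the isoperimetric/combinatorial estimate converts the combinatorial boundary bound $|\partial_S E|\ge c|E|$ into a geometric bound $|\partial D|_{m-1}\ge c'\,|D|_m$ with $c'$ independent of $D$, after also accounting for the portion of $\partial D$ interior to the relevant copies of $F$ (for which one uses a uniform relative isoperimetric inequality on the compact piece $F$).

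The main obstacle is precisely this last passage from the combinatorial F\o{}lner-type lower bound on $\partial_S E$ to a genuine geometric Cheeger lower bound on $\partial D$: one must control the part of $\partial D$ that lies in the interior of translates $gF$ with $gF\subseteq D$, which is not seen by the combinatorial boundary $\partial_S E$. The standard fix is to apply a uniform relative isoperimetric inequality on the fixed compact manifold-with-corners $F$ (or on a fixed compact neighborhood of $F$), which holds with a constant depending only on the geometry of $F$; combining it with the combinatorial estimate on the ``crossing'' faces gives the desired $h(M_1)>0$. Making the bookkeeping of the two contributions to $\partial D$ clean — and in particular handling copies $gF$ that $D$ meets but does not contain — is where the real work lies; everything else is routine.
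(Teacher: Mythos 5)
Your overall structure is sound, but your self-contained argument for the hard direction takes a genuinely different route from the one the paper sketches, so a comparison is warranted.

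The paper's proof of the implication ``$\lambda_0(M_1)=0 \Rightarrow \Gamma$ amenable'' runs \emph{directly}: from $h(M_1)=0$ one picks almost-minimizing domains $D_n$, observes that covering $D_n$ by translates of $F$ can fail to give F\o{}lner sets because $\partial D_n$ may have unbounded mean curvature, and then invokes geometric measure theory (as in Brooks \cite{Br1}) to replace $D_n$ by a minimizer $U_n$ of $h(\mathring W_n)$ whose boundary has uniformly bounded mean curvature; covering $U_n$ by translates of $F$ then produces F\o{}lner sets. Your argument runs \emph{contrapositively}: from non-amenability you want $h(M_1)>0$, and for an arbitrary compact $D$ you compare the combinatorial boundary of $E=\{g: gF\cap D\neq\emptyset\}$ with $|\partial D|$ using a relative isoperimetric inequality on the fixed compact pieces $F$ and $F\cup sF$, $s\in S$. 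The tools are different: the paper (Brooks) regularizes the competing domain via GMT; you regularize the combinatorial-to-geometric comparison piece by piece via a uniform relative isoperimetric inequality. Your route is more elementary and is closer in spirit to the paper's own later simplification (\cref{closed base}), which, however, uses Buser's \cref{Buser} (Bishop--Gromov) rather than a relative isoperimetric inequality; so yours is a third variant of the regularization step. Your first route via \cref{name} is, as you note, anachronistic: within the paper's development \cref{Brooks1} is a historical ingredient on the way to \cref{name}, so invoking \cref{name} would be circular.

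One concrete imprecision: the sentence ``one must control the part of $\partial D$ that lies in the interior of translates $gF$ with $gF\subseteq D$'' is backwards — if $gF\subseteq D$ then $\partial D\cap\mathring{(gF)}=\emptyset$, so there is nothing there to control. The actual difficulty is the copies $gF$ that $D$ meets but does not (nearly) fill: they inflate $|E|$ without contributing either volume to $D$ or area to $\partial D$, so $|E|\,|F|$ can vastly overestimate $|D|$ and $|\partial_SE|\geq c|E|$ does not by itself bound $|\partial D|$ from below. You do say the right thing a sentence later (``copies $gF$ that $D$ meets but does not contain''), so this is presumably a slip. To make the scheme airtight one should work with $E_+=\{g: |gF\cap D|\geq |F|/2\}$ rather than $E$, apply non-amenability to $E_+$, use the relative isoperimetric inequality on the compact pieces $F\cup sF$ (for each $g\in\partial_SE_+$ paired with a bad generator $s$) to extract a uniform lower bound $|\partial D\cap\mathring{(gF\cup gsF)}|\geq c_1>0$, sum with a bounded-multiplicity overlap count, and separately bound $\sum_{g\notin E_+}|gF\cap D|$ by $C\,|\partial D|$ using the relative isoperimetric inequality on $F$ together with the trivial cap $|gF\cap D|\leq|F|$. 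With that bookkeeping, which you flag as ``where the real work lies'' without actually doing it, the argument does close.
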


\begin{proof}[Sketch of the proof of \cref{Brooks1} after \cite{Br1}]
Assume that $\lambda_0(M_1)=0$.

Let $F$ be a finite sided fundamental domain for the covering $p$,
and consider the finite generating set $S$ of $\Gamma$ as in \eqref{genset}.
Using \cref{Folner2}, it suffices to show that, for any $\varepsilon > 0$,
there exists a finite union $H$ of translates of $F$ such that
	\begin{equation}\label{small ratio 1}
	|\partial H| < \ve |H|.
	\end{equation}
To that end, we note first that the assumption $\lambda_{0}(M_{1}) = 0$
together with the Cheeger inequality implies that the Cheeger constant $h(M_{1}) = 0$;
that is, for any $n\in\N$, there exists a smoothly bounded, compact domain $D_{n}$ in $M_{1}$
such that $|\partial D_n|/|D_n| < 1/n$.
Next, we cover each $D_{n}$ with a finite union $K_{n}$ of translates of $F$.
Now $K_{n}$ does not need to satisfy \eqref{small ratio 1},
essentially due to the fact that the mean curvature of $\partial D_{n}$ need not be uniformly bounded.
This is the point which makes the proof technically involved and non-trivial.

	Taking into account that the action of $\Gamma$ on $M_{1}$ is cocompact,
it is not hard to see that there exist compact domains $W_{n}$ with smooth boundary
such that $K_n\subseteq W_n$ and such that the mean curvature of $\partial W_{n}$ is uniformly bounded.
Relying heavily on geometric measure theory,
Brooks then shows the existence of a minimizer of the Cheeger constant $h(\mathring{W}_n)$ in each $W_{n}$.
To be more precise, he shows that there exist domains $U_{n} \subseteq W_{n}$ with rectifiable $\partial U_{n}$
(roughly speaking, this means sufficiently regular to define volume and mean curvature)
which might touch $\partial W_n$,
such that $|\partial U_n|/|U_n| = h(\mathring{W}_{n})$ and such that the mean curvature of $\partial U_{n}$ is bounded in terms of $h(\mathring{W}_{n})$ and the mean curvature of $\partial W_{n}$.
In particular, we have that $|\partial U_n|/|U_n| < 1/n$
and that the mean curvature of $\partial U_{n}$ is uniformly bounded.
Covering $U_{n}$ with finite unions $H_{n}$ of translates of $F$
then gives rise to domains satisfying \eqref{small ratio 1}.	
\end{proof}

In \cite{Br2}, Brooks studies the noncompact case.
More precisely, not excluding compactness of $M_0$,
he assumes throughout \cite{Br2} that $M_0$ is complete
and of finite topological type as defined in the previous section.
He also assumes that the covering $p$ is normal with group $\Gamma$ of covering transformations.

In the proof, Brooks renormalizes the Laplace operator on $M_1$,
using the lift $\vf$ to $M_1$ of a positive $\lambda_0(M_0)$-harmonic function $\vf_0$ on $M_0$;
compare with \cref{suseren}.
He then adapts the Cheeger constant according to his needs. 
Namely, for any compact $L\subseteq F$, he sets
\begin{align*}
	h_\vf(F,L) = \inf \frac{|\partial D\cap\mathring{F}|_{\varphi}}{|D\cap\mathring{F}|_{\varphi}},
\end{align*}
where $D$ runs over the family of relatively compact domains $D$ in $F$ with smooth boundary $\partial D$
which intersects the boundary simplices of $F$ transversally, but such that $\bar D$ does not intersect $L$.

\begin{thm}[Brooks, Theorem 2 in \cite{Br2}]\label{Brooks3}
	Suppose that $p$ is normal and that $M_{0}$ is of finite topological type and complete.
	Suppose further that $h_\phi(F,L)>0$ for some compact $L\subseteq F$.
	Then $\lambda_{0}(M_{1})=\lambda_{0}(M_{0})$ if and only if $\Gamma$ is amenable.
\end{thm}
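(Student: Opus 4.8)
That amenability of $\Gamma$ implies $\lambda_0(M_1)=\lambda_0(M_0)$ is contained in \cref{Brooks2} (and, with no topological hypothesis at all, in \cref{tame}); so the hypothesis $h_\varphi(F,L)>0$ should be needed only for the converse, and that is the direction I would attack. Assume $\lambda_0(M_1)=\lambda_0(M_0)=:\lambda$. The plan is to verify F\o{}lner's criterion for $\Gamma$ in the form of \cref{Folner2}, with the finite symmetric generating set $S$ from \eqref{genset}: it suffices to produce finite non-empty $E_n\subseteq\Gamma$ with $|\partial_SE_n|/|E_n|\to0$, which then forces $\Gamma$ to be amenable.

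First I would renormalize. By \cref{poslam} there is a positive function $\varphi_0$ on $M_0$ with $\Delta\varphi_0=\lambda\varphi_0$; put $\varphi=\varphi_0\circ p$, a positive $\lambda$-harmonic function on $M_1$. Since $p\circ g=p$ for every covering transformation $g$, both $\varphi$ and the weighted measure $\mu=\varphi^2\dv$ are $\Gamma$-invariant; in particular $|gA|_\varphi=|A|_\varphi$ for all Borel $A\subseteq M_1$ and $g\in\Gamma$, so the combinatorics below will be $\Gamma$-equivariant. Applying \eqref{bottom2} together with the modified Cheeger inequality \eqref{modche2} on $M_1$ (with $S=\Delta$, this $\lambda$, this $\varphi$; here $\lambda=\lambda_0(M_0)\le\lambda_0(M_1)$ by \cref{monot}), the vanishing $\lambda_0(M_1)-\lambda=0$ forces $h_\varphi(M_1)=0$. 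Hence there are smoothly bounded compact domains $D_n\subseteq M_1$ with $\varepsilon_n:=|\partial D_n|_\varphi/|D_n|_\varphi\to0$, which after a generic perturbation I may take to meet all face-translates $g\partial F$, $g\in\Gamma$, transversally.

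The new ingredient of this theorem, compared with the closed case, is that $h_\varphi(F,L)>0$ lets me confine the $D_n$ to a bounded neighborhood of the core. Since $M_0$ has finite topological type I may choose $L$ so that $F\setminus L$ is a disjoint union of finitely many ends, whose $\Gamma$-translates partition $M_1$ into the core $\Theta=\Gamma\cdot L$ and finitely many ends of $M_1$; the hypothesis says exactly that any relatively compact sub-domain of one of these ends which stays away from its mouth has $\varphi$-isoperimetric ratio at least $\delta:=h_\varphi(F,L)>0$ (translating competitors by elements of $\Gamma$ and using $\Gamma$-invariance of $\mu$). Intersecting this with the translate structure and summing over the ends, one deduces for every depth $d$ that $|D_n\cap\{\text{depth}>d\}|_\varphi\le\delta^{-1}(|\partial D_n|_\varphi+|D_n\cap\{\text{depth}=d\}|_\varphi)$. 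Choosing, by the coarea formula, a depth $d=d_n$ at which the cross-sectional term is a small fraction of $|D_n|_\varphi$ and cutting $D_n$ there, one obtains domains $D_n'$ with $|D_n'|_\varphi\ge\tfrac12|D_n|_\varphi$ and $|\partial D_n'|_\varphi/|D_n'|_\varphi\to0$ lying in a $\Gamma$-invariant neighborhood $N$ of $\Theta$ (a truncation of the ends) on which $\Gamma$ acts cocompactly and which, like $\Theta$ itself, has bounded geometry with bounds independent of $n$. This is precisely the step that collapses for the fat-cusp counterexamples recalled in \cref{sullivan}: there $h_\varphi(F,L)=0$, a deep sliver of the ends already has $\varphi$-isoperimetric ratio tending to $0$, and the confinement destroys the ratio instead of preserving it.

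It remains to run Brooks's argument from the proof of \cref{Brooks1} for the $D_n'$ inside the bounded-geometry manifold $N$: cover each $D_n'$ by a finite union $K_n$ of translates of $F$; since the mean curvature of $\partial D_n'$ is not controlled, enlarge $K_n$ to a compact domain $W_n\subseteq N$ with $\partial W_n$ of uniformly bounded mean curvature; invoke geometric measure theory to produce a minimizer $U_n\subseteq W_n$ of the Cheeger functional of $\mathring{W}_n$, with $|\partial U_n|/|U_n|\to0$ and the mean curvature of $\partial U_n$ bounded in terms of that of $\partial W_n$; finally cover $U_n$ by a finite union $H_n$ of translates of $F$. Reading off $E_n\subseteq\Gamma$ with $H_n=\bigcup_{g\in E_n}gF$, the uniform mean-curvature bound yields $|\partial_SE_n|/|E_n|\to0$, exactly as in the closed case, so $\Gamma$ is amenable by \cref{Folner2}. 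I expect this last, geometric-measure-theory step inside $N$ to be the main obstacle: it is the point Brooks singles out as technically involved already in \cref{Brooks1}, where the only data one starts from is a small isoperimetric ratio, with no a priori control on the mean curvature; the confinement of the previous paragraph is where, and the only place where, the hypothesis $h_\varphi(F,L)>0$ is genuinely needed.
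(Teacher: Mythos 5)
Your proposal captures the large-scale strategy of Brooks's proof — renormalize with the lifted $\lambda_0(M_0)$-harmonic function, use \eqref{modche2} to extract almost-isoperimetric domains, feed them into the geometric-measure-theory machinery of the proof of \cref{Brooks1} to gain mean-curvature control, and read off a F\o{}lner sequence in $\Gamma$ — and your ``confinement'' step is a sensible heuristic for how $h_\varphi(F,L)>0$ pushes competitors towards the cocompact core. Whether you phrase it as deriving amenability from $h_\varphi(M_1)=0$ or, as Brooks does, as deriving $h_\psi(M_1)>0$ from non-amenability and then invoking \eqref{modche2}, is only a contrapositive rearrangement and not a real difference.

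There is, however, a specific device in Brooks's argument that you omit, and it is exactly the one that makes the GMT step you worry about go through. Brooks does \emph{not} run the machinery with the weight $\varphi$ itself. He replaces $\varphi_0$ by a positive $\psi_0\in C^\infty(M_0)$ with $\psi_0\equiv1$ on a neighborhood of $K=p(L)$ and $\psi_0=\varphi_0$ outside a larger compact set, lifts it to $\psi$ on $M_1$, and works with a minimizing sequence for $h_\psi(M_1)$. The domains $W_n$ and their minimizers $U_n\subseteq W_n$ are required to have uniformly bounded mean curvature \emph{only in $p^{-1}(K)$}, i.e.\ precisely where $\psi\equiv1$, so the existence and regularity arguments from \cref{Brooks1} are applied to the \emph{unweighted} isoperimetric functional, with the weight playing no role there. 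The hypothesis $h_\varphi(F,L)>0$ then enters when estimating the isoperimetric ratio of $U_n$ in each translate of $F$ separately, yielding $h_\psi(M_1)>0$ under non-amenability, hence $h_\varphi(M_1)>0$ and the strict inequality. Your sketch instead asks to ``invoke geometric measure theory to produce a minimizer $U_n$'' for the $\varphi$-weighted functional on $N$. That is not free: the weighted minimizers satisfy a prescribed weighted mean-curvature condition involving $\grad\ln\varphi$, and the existence/regularity/curvature estimates of \cref{Brooks1} would have to be redone in this generality. Cocompactness of $N$ gives uniform two-sided bounds on $\varphi$ and $|\grad\varphi|$, so the gap is plausibly bridgeable, but it is a genuine gap relative to Brooks's actual route, which sidesteps the weighted GMT entirely by arranging the weight to be identically $1$ exactly where the GMT is needed.
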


\begin{rem}[on the condition $h_\phi(F,L)>0$]\label{rembro1}
	Recall that the lower bound of the essential spectrum of $M_0$ is given by
	\begin{align*}
		\lambda_{\ess}(M_0) = \sup \lambda_0(M_0\setminus K), 
	\end{align*}
	where $K$ runs over the family of compact subsets of $M_0$.
	By \eqref{modche2}, 
	\begin{align*}
		\frac14h_{\vf_{0}}(M_0\setminus K)^2
		\le \lambda_0(M_0\setminus K) - \lambda_0(M_0).
	\end{align*}
	For a non-empty compact $K\subseteq M_0$, let $L=p^{-1}(K)\cap F$.
	Then
	\begin{align*}
		h_{\vf_{0}}(M_0\setminus K) \ge h_\vf(F,L).
	\end{align*}
	We conclude that $\lambda_{\ess}(M_0)>\lambda_0(M_0)$
	if $h_\vf(F,L)>0$ for some compact $L\subseteq F$.
	This observation let Brooks to the question whether the condition
	$\lambda_{\ess}(M_0)>\lambda_0(M_0)$
	is a valid replacement of his condition $h_\vf(F,L)>0$.
	(Together with \cref{tame}, this is achieved by \cref{name}.)
\end{rem}

\begin{proof}[Sketch of the proof of \cref{Brooks3} after \cite{Br2}]
Assume that
\begin{align*}
	\lambda_{0}(M_{1}) = \lambda_{0}(M_{0}).
\end{align*}
	Consider a positive $\psi_{0} \in C^{\infty}(M_{0})$ with $\psi_{0} = \varphi_{0}$
	outside a compact neighborhood of $K := p(L)$, $\psi_{0} = 1$ in a neighborhood of $K$,
	and denote by $\psi$ its lift to $M_{1}$.
	Similarly to the proof of \cref{Brooks1}, given a minimizing sequence $(D_{n})_{n \in \mathbb{N}}$
	for $h_{\psi}(M_{1})$, Brooks asserts that $D_{n}$ is contained
	in a smoothly bounded, compact domain $W_{n}$ satisfying certain requirements,
	in particular that $\partial W_{n}$ has uniformly bounded mean curvature in $p^{-1}(K)$.
	Again relying on geometric measure theory,
	he asserts that there exists a minimizer $U_{n}$ of $h_{\psi}(\mathring{W}_{n})$ in each $W_{n}$
	and that $\partial U_{n}$ has uniformly bounded mean curvature in $p^{-1}(K)$.
	He then estimates the isoperimetric ratio of the $U_{n}$ in each translate of $F$ separately.
	From the assumption that the covering is non-amenable, Brooks concludes that $h_{\psi}(M_{1}) > 0$,
	which implies that $h_{\varphi}(M_{1}) > 0$.
	The proof is then completed by the modified Cheeger inequality from \eqref{modche2}.
\end{proof}

The preceding result of Brooks motivated Roblin and Tapie to establish another extension of \cref{Brooks1}.
As in \cref{Brooks3}, their extension also involves assumptions on a fundamental domain of the covering. However, it is worth to mention that their proof avoids using geometric measure theory.

In order to state their result, we need some definitions.
Suppose that $p$ is normal.
Consider a positive $\lambda_{0}(M_{0})$-harmonic function $\varphi_{0}$ on $M_{0}$.
Roblin and Tapie call a fundamental domain $F$ of $p$ \emph{spectrally optimal}
(with respect to $\vf_0$) if $\partial F$ is piecewise $C^{1}$
and the lift $\varphi$ of $\varphi_{0}$ to $F$ satisfies Neumann boundary conditions along $\partial F$.
By the square-integrability and positivity of $\vf_0$, if $F$ is a spectrally optimal fundamental domain,
then the bottom of its Neumann spectrum is given by $\lambda_{0}^{N}(F) = \lambda_{0}(M_{0})$
\cite[Lemme 4.2]{RT}.

\begin{thm}[Roblin \& Tapie, Theorem 4.3 in \cite{RT}]\label{RoTa}
	Suppose that $p$ is normal, that $M_0$ is complete, 
	and that $\lambda_0(M_0)$ is an eigenvalue of $M_0$.
	Let $\vf_0$ be a positive $\lambda_0(M_0)$-eigenfunction on $M_0$.
	Suppose further that $p$ has a spectrally optimal fundamental domain $F$
	such that $\lambda_{0}(M)$ is an isolated eigenvalue of multiplicity one
	of the Neumann spectrum of $F$.
	Then $\lambda_{0}(M_{1})=\lambda_{0}(M_{0})$ if and only if $\Gamma$ is amenable.
\end{thm}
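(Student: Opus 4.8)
The plan is to prove only the implication ``$\lambda_0(M_1)=\lambda_0(M_0)\Rightarrow\Gamma$ amenable'', the converse being \cref{tame}; equivalently, assuming $\Gamma$ non-amenable, I would show $\lambda_0(M_1)>\lambda_0(M_0)$. First I would renormalise on $M_1$ with the lift $\varphi=\varphi_0\circ p$ of the positive $\lambda_0(M_0)$-eigenfunction $\varphi_0$. By \cref{renor} this conjugates $\Delta-\lambda_0(M_0)$ on $M_1$ into the diffusion operator $\Delta-2\grad\ln\varphi$, so by \eqref{bottom2} it is enough to bound
\begin{align*}
  R(f)=\frac{\int_{M_1}|\grad f|^2\,d\mu}{\int_{M_1}f^2\,d\mu},\qquad \mu=\varphi^2\dv,
\end{align*}
from below by a positive constant, uniformly over non-zero $f\in\operatorname{Lip}_c(M_1)$. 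Here $\varphi$ is $\Gamma$-invariant, hence so is $\mu$, and $F$ is a $\mu$-fundamental domain with $\mu(gF)=v:=\mu(F)<\infty$ for all $g\in\Gamma$ (finiteness by square-integrability of $\varphi_0$).

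Next I would extract a local Poincar\'e inequality from the hypothesis on $F$. Renormalising on $F$ with $\varphi$ (using that $\varphi|_F$ satisfies Neumann conditions along $\partial F$, i.e.\ spectral optimality), the assumption that $\lambda_0(M_0)$ is an isolated, simple eigenvalue of the Neumann spectrum of $F$ --- with ground state $\varphi|_F$, by \cite[Lemme 4.2]{RT} --- becomes a spectral gap $\gamma>0$ above the zero ground state $1$ of the weighted Neumann Laplacian of $(F,\mu)$. Writing $a_g=v^{-1}\int_{gF}f\,d\mu$ for the $\mu$-mean of $f$ over $gF$, which is isometric to $F$ under an element of $\Gamma$, this yields for every $g\in\Gamma$
\begin{align*}
  \int_{gF}|f-a_g|^2\,d\mu\le\gamma^{-1}\int_{gF}|\grad f|^2\,d\mu.
\end{align*}

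I would then split $\int_{M_1}f^2\,d\mu=\sum_{g\in\Gamma}\bigl(v\,a_g^2+\|f-a_g\|_{L^2(gF,\mu)}^2\bigr)$, a finite sum since $f$ has compact support. If $\sum_g\|f-a_g\|_{L^2(gF,\mu)}^2\ge\tfrac12\int_{M_1}f^2\,d\mu$, the Poincar\'e inequality above already gives $R(f)\ge\gamma/2$. Otherwise $\sum_g v\,a_g^2\ge\tfrac12\int_{M_1}f^2\,d\mu$, and I would pass to the Cayley graph of $\Gamma$ for the finite symmetric generating set $S$ of \eqref{genset}. An \emph{edge} Poincar\'e inequality
\begin{align*}
  |a_g-a_{gs}|^2\le C\left(\int_{gF}|\grad f|^2\,d\mu+\int_{gsF}|\grad f|^2\,d\mu\right),\qquad g\in\Gamma,\ s\in S,
\end{align*}
with $C$ independent of $g$ and $s$, summed over $g$ and $s$, gives $\sum_{s\in S}\sum_g|a_g-a_{gs}|^2\le 2C|S|\int_{M_1}|\grad f|^2\,d\mu$. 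Since $\Gamma$ is non-amenable, Kesten's criterion provides $\kappa>0$ with $\sum_{s\in S}\sum_g|a_g-a_{gs}|^2\ge\kappa\sum_g a_g^2$ for all $(a_g)\in\ell^2(\Gamma)$; this is the spectral form of the failure of the F\o{}lner condition in \cref{Folner2}. Combining, $\int_{M_1}|\grad f|^2\,d\mu\ge\frac{\kappa}{2C|S|v}\sum_g v\,a_g^2\ge\frac{\kappa}{4C|S|v}\int_{M_1}f^2\,d\mu$, i.e.\ $R(f)\ge\kappa/(4C|S|v)$. In either case $R(f)\ge\min\{\gamma/2,\ \kappa/(4C|S|v)\}>0$, which is what we want.

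The main obstacle is the edge Poincar\'e inequality with a \emph{uniform} constant $C$. Because $\partial F$ is only piecewise $C^1$ and $F$ need not be relatively compact, one must establish, near each face shared by $gF$ and $gsF$, a Poincar\'e inequality for $\mu$ on a connected neighbourhood meeting both tiles, and then argue that only finitely many congruence types of such configurations occur up to $\Gamma$; this is precisely where local finiteness of the tiling of $M_1$ by translates of $F$, and hence finite generation of $\Gamma$, is needed, and should be recorded as a standing hypothesis much as in \cite{BC}. Controlling the weight $\varphi$ near $\partial F$ and near the ends of $F$ while proving these neighbourhood inequalities is the delicate technical point; everything else is bookkeeping with \eqref{bottom2} and the two Poincar\'e-type inequalities above.
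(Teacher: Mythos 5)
Your argument is correct and is essentially the same as the one Roblin and Tapie give (and which the paper sketches): decompose $f$ tile-by-tile using the orthogonal projection onto the ground state $\varphi$ of the Neumann problem on $F$, use the Neumann spectral gap on each tile, and transfer the comparison of the tile-averages $a_g$ (your weighted means coincide with the paper's $b(g)=\int_{gF}f\varphi\,dv$ after the renormalization $f\mapsto f/\varphi$) to the Cayley graph of $\Gamma$ via a discrete Cheeger/Kesten inequality. The only cosmetic difference is that you run the argument in contrapositive form as a uniform lower bound on the renormalized Rayleigh quotient, with the dichotomy between the fluctuation part and the mean part made explicit, whereas the paper lets the bound degenerate as $\varepsilon\to0$; the genuine technical point you flag — the edge Poincar\'e inequality with a constant uniform over adjacent tiles, which needs the finite generating set $S$ of \eqref{genset} and local finiteness of the tiling — is indeed the step the paper's sketch leaves implicit.
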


Roblin and Tapie assume implicitly that $\Gamma$ is finitely generated.
More precisely, they assume that the symmetric generating set $S$ of $\Gamma$
as in \eqref{genset} is finite \cite[Page 72]{RT}.

\begin{rem}[on the conditions on $F$ and $\vf$]\label{rembro2}
As we already pointed out above,
since the lift $\vf$ of $\vf_{0}$ to $F$ satisfies Neumann boundary conditions,
it is actually a Neumann-eigenfunction corresponding to $\lambda_{0}^{N}(F)$,
and hence $\lambda_{0}^{N}(F)=\lambda_0(M_0)$.
Moreover, the assumption that $\lambda_{0}^{N}(F)$
is an isolated point of the Neumann spectrum of $F$ of multiplicity one implies that
\begin{align*}
  \lambda_1^N(F) := \inf \la \Delta f,f\ra_{L^2(F)}/\|f\|_{L^2(F)}^2
  > \lambda_0^N(F) = \lambda_0(M_0),
\end{align*}
where the infimum is taken over all smooth functions $f$ on $F=\bar F$ with compact support
which are $L^2$-perpendicular to $\vf$.
Now the lift to $F$ of any smooth function on $M_0$ with compact support,
which is $L^2$-perpendicular to $\vf_0$, is of this kind,
and hence
\begin{align*}
  \lambda_1(M_0) := \inf \la \Delta f,f\ra_{L^2(M_0)}/\|f\|_{L^2(M_0)}^2
  \ge \lambda_1^N(F) > \lambda_0(M_0),
\end{align*}
where the infimum is now taken over all smooth functions $f$ on $M_0$ with compact support
which are $L^2$-perpendicular to $\vf_0$.
It should be noticed that $\lambda_1(M_0)=\inf (\sigma(M_0)\setminus\{\lambda_0(M_0)\})$.
In particular, the assumptions of \cref{RoTa} imply that
$\lambda_{\ess}(M_{0})\ge \lambda_1(M_0) > \lambda_{0}(M_{0})$.
\end{rem}

\begin{proof}[Sketch of the proof of \cref{RoTa} after \cite{RT}]
Assume that
\begin{align*}
	\lambda_{0}(M_{1}) = \lambda_{0}(M_{0}).
\end{align*}
Let $\varphi_{0}$ be a positive $\lambda_{0}(M_{0})$-harmonic function on $M_{0}$ of $L^2$-norm one.
Then, on any translate of $F$,
the lift $\varphi$ of $\varphi_{0}$ to $M_{1}$ is square-integrable
and satisfies Neumann boundary conditions along its boundary.
Now for any $\varepsilon > 0$,
there exists $f \in C^{\infty}_{c}(M_{1})$ with $R(f) < \lambda_{0}(M_{1}) + \varepsilon$.
Given $g \in \Gamma$, consider the orthogonal projection of $f$ on $\varphi$ in $L^{2}(g F)$;
that is, write
\begin{align*}
  f = b(g) \varphi + h_{g} \text{ in } L^{2}(gF),
  \text{ where } b(g) := \int_{g F} f \varphi\dv.
\end{align*}
Since $h_{g}$ is perpendicular to $\varphi$ in $L^{2}(g F)$,
\begin{align*}
	\int_{g F} |\grad h_{g}|^{2} \geq \lambda_{1}^{N}(F) \int_{g F} h_{g}^{2},
\end{align*}
where $\lambda_{1}^{N}(F)$ is the infimum of the Neumann spectrum of $F$ with $\lambda_{0}^{N}(F)$ removed.
In this way, Roblin and Tapie obtain a function $b \colon \Gamma \to \R$, which turns out to satisfy
\begin{align*}
\sum_{g \in \Gamma} \sum_{s \in S} (b(sg) - b(g))^{2} < \delta(\varepsilon) \sum_{g \in \Gamma} b_{g}^{2},
\end{align*}
where $\delta(\varepsilon) \rightarrow 0$ as $\varepsilon \rightarrow 0$.
This yields that the bottom of the spectrum of the discrete Laplacian (with respect to $S$) on $\Gamma$ is zero. 
Then the discrete version of the Cheeger inequality (cf.\ for instance \cite[Proposition 4.6]{RT})
implies that the second condition of \cref{Folner2} is satisfied, and hence that $\Gamma$ is amenable.
\end{proof}

Recall that Brooks's proof of \cref{Brooks1} relies on geometric measure theory.
More precisely, Brooks uses geometric measure theory
in order to pass from a minimizing sequence of domains $D_{n}$ for the Cheeger constant
to a minimizing sequence of domains $U_{n}$ for which we have estimates on the volume of the collars
of radius $r$ about $\partial U_n$.

Buser encountered a very similar problem in the establishment of his converse to the Cheeger inequality
and resolved this issue relying only on the Bishop-Gromov volume comparison theorem \cite{Bu}.
To state a consequence of his considerations,
we use the notation $A^r$ for the $r$-neighborhood of a set $A$ in $M$.

\begin{lem}[Buser, consequence of Lemma 7.2 in \cite{Bu}]\label{Buser}
	Let $M$ be a possibly non-connected, complete Riemannian manifold
	with Ricci curvature boun\-ded from below.
	If $h(M) = 0$, then, for any $\ve,r>0$,
	there exists a bounded open subset $A \subseteq M$ such that $|A^{r}\setminus A| < \varepsilon |A|$.
\end{lem}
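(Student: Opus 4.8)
The plan is to reduce the assertion to a collar-volume estimate for a single smoothly bounded compact domain and then to quote Buser's Lemma~7.2 in \cite{Bu} --- whose sole ingredient is the Bishop--Gromov volume comparison theorem --- for that estimate. By the definition \eqref{checon} of the Cheeger constant, $h(M)=0$ means that for every $\delta>0$ there is a compact domain $D\subseteq M$ with smooth boundary and $|\partial D|<\delta|D|$; after decomposing $D$ along the components of $M$, some connected piece has boundary-to-volume ratio at most that of $D$, so I may assume $D$ is connected and lies in a single component, where completeness and the lower Ricci bound $\Ric\ge-(n-1)k^2$ are in force. Thus it suffices to find, for prescribed $\ve,r>0$, a number $\delta=\delta(n,k,r,\ve)>0$ such that from any such $D$ one can build a bounded open set $A$ with $|A^r\setminus A|<\ve|A|$.

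The heart of the matter is the collar bound, and this is precisely where Brooks's proof of \cref{Brooks1} invoked geometric measure theory and where Buser's argument substitutes Bishop--Gromov. The naive choice $A=\mathring D$ does not work: one only gets $|D^r\setminus D|\le|(\partial D)^r|$, and the volume of a tubular neighbourhood of $\partial D$ cannot be bounded by a constant depending only on $n,k,r$ times $|\partial D|$, because $\partial D$ may be strongly convex on small pieces --- thin outward spikes, say --- where the tube balloons although it carries little area. What I would invoke is Buser's covering argument (\cite[Lemma~7.2]{Bu}): cover the collar by balls of radius comparable to $r$ centred in it; join each centre to $\partial D$ by a minimising segment, which, parametrised by distance to $\partial D$, runs through the collar and so may be charged with a definite amount of volume in a thinner sub-collar; and use Bishop--Gromov to bound the multiplicity of this charging and the ratios of volumes of concentric balls. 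This yields a quantitative comparison between the volumes of consecutive collars of $D$, from which, together with $|\partial D|<\delta|D|$, one extracts a bounded open set $A$ --- built from $D$ and the covering data --- whose $r$-collar has volume below $\ve|A|$ as soon as $\delta$ is small; for disconnected $M$ one simply works in the component containing $D$.

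The hard part will be this core estimate, i.e.\ \cite[Lemma~7.2]{Bu} itself: controlling collar volumes uniformly via Bishop--Gromov alone, without any a priori bound on the mean curvature of $\partial D$, and in particular isolating the right region $A$ --- the delicate domains being those resembling a large ball with a great many tiny balls removed, for which neither $D$ itself nor a fixed inner parallel body of $D$ need suffice. The remaining steps --- the reduction, the disconnected case, and the calibration of $\delta$ --- are routine bookkeeping.
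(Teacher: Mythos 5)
The paper offers no proof of this lemma at all: it is stated verbatim as a consequence of Buser's Lemma~7.2 in \cite{Bu}, which is exactly the ingredient you plan to quote. Your reduction to a connected domain lying in a single component, followed by an appeal to Buser's Bishop--Gromov covering argument, matches the paper's (tacit) approach, and, like the paper, you defer the quantitative core to \cite{Bu}.
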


Buser's approach to this problem gave rise to a different extension of Brooks's result,
not involving any assumptions on any fundamental domains,
and valid also for non-normal coverings and Schr\"odinger operators
(with conditions on the potential).

\begin{thm}[Ballmann, Matthiesen, \& Polymerakis, Theorem 1.3 in \cite{BMP2}]\label{BMP2 result}
Suppose that the Ricci curvature of $M_0$ is bounded from below.
Let $S_0=\Delta+V$ be a Schr\"{o}dinger operator on $M_{0}$
with $V$ and $\grad V$ bounded, and let $S_{1}$ be its lift to $M_{1}$.
Assume that $\lambda_{\ess}(S_{0},M_0)>\lambda_{0}(S_{0},M_0)$.
Then $\lambda_{0}(S_{1},M_1)=\lambda_{0}(S_{0},M_0)$ if and only if $p$ is amenable.
\end{thm}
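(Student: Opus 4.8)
By \cref{tame}, amenability of $p$ already forces $\lambda_0(S_1,M_1)=\lambda_0(S_0,M_0)$, so the content of the theorem is the converse, and this is what I would prove. Write $\lambda:=\lambda_0(S_0,M_0)$ and assume $\lambda_{\ess}(S_0,M_0)>\lambda$ together with $\lambda_0(S_1,M_1)=\lambda$; the goal is to show that $p$ is amenable. The plan is to renormalize, combine a Buser-type "thin collar" statement with a concentration estimate coming from $\lambda_{\ess}>\lambda$, and then discretize to produce F\o{}lner sets for the monodromy action.

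\textbf{Renormalization and bounded geometry.} By \cref{poslam} fix a positive $\lambda$-harmonic function $\vf_0$ on $M_0$ for $S_0$ and let $\vf:=\vf_0\circ p$, a positive $\lambda$-harmonic function on $M_1$ for $S_1$. With $\mu:=\vf^2\dv$ on $M_1$, renormalization (\cref{renor} and \eqref{bottom2}) turns the hypothesis $\lambda_0(S_1,M_1)=\lambda$ into
\begin{align*}
  \inf_{0\ne f\in C^\infty_c(M_1)}\frac{\int_{M_1}|\grad f|^2\,d\mu}{\int_{M_1}f^2\,d\mu}=0 .
\end{align*}
Since $\Ric_{M_0}$ is bounded from below and $V,\grad V$ are bounded, a Cheng--Yau-type gradient estimate applied to $\ln\vf_0$ (a positive solution of $\Delta w+|\grad w|^2=\lambda-V$) shows $|\grad\ln\vf_0|$ is bounded, so the drift $-2\grad\ln\vf_0$ and its lift to $M_1$ are bounded; as $M_1$ is complete, $(M_1,g,\mu)$ is a complete weighted manifold with Ricci bounded below and bounded drift. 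Hence the weighted Cheeger inequality \eqref{modche2} gives $h_\vf(M_1)=0$, and the weighted analogue of Buser's argument (\cref{Buser}) yields: for every $\ve,r>0$ there is a bounded open $A\subseteq M_1$ with $|A^r\setminus A|_\vf<\ve\,|A|_\vf$.

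\textbf{Concentration over a compact part of $M_0$.} Since $\lambda_{\ess}(S_0,M_0)>\lambda$, pick a compact $K\subseteq M_0$ with $\delta:=\lambda_0(S_0,M_0\setminus K)-\lambda>0$. By \cref{monot} applied to $p\colon M_1\setminus p^{-1}(K)\to M_0\setminus K$ and renormalization, every $f\in C^\infty_c(M_1\setminus p^{-1}(K))$ satisfies $\int|\grad f|^2\,d\mu\ge\delta\int f^2\,d\mu$. Testing against $(1-\eta_R)f$, where $\eta_R$ is a Lipschitz cutoff equal to $1$ on $p^{-1}(K)$, equal to $0$ off the $R$-neighbourhood $p^{-1}(K)^R$, with $|\grad\eta_R|\le2/R$, a one-line computation gives
\begin{align*}
  \frac{\int_{M_1\setminus p^{-1}(K)^R}f^2\,d\mu}{\int_{M_1}f^2\,d\mu}
  \ \le\ \frac{2}{\delta}\,\frac{\int_{M_1}|\grad f|^2\,d\mu}{\int_{M_1}f^2\,d\mu}+\frac{8}{\delta R^2} .
\end{align*}
Choosing $R$ large and then the near-minimizer accordingly, an arbitrarily large fraction of the $\mu$-mass of $f$ lies in the fixed neighbourhood $p^{-1}(K)^R$; running the same comparison for the collar-thin sets $A$ from Step~1 shows the same for $|A|_\vf$. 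The point is that $p^{-1}(K)^R$ fibers over the compact $K^R$, so the geometry there is uniformly controlled along the fiber.

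\textbf{From thin-collar sets to a F\o{}lner sequence, and the main obstacle.} By \cref{amecom} it suffices to prove that each restriction $p\colon p^{-1}(K_m)\to K_m$ is amenable for an exhaustion of $M_0$ by compact domains with $K_1\supseteq K^R$, and by \cref{Folner} this reduces to producing, for each finite set $S$ of monodromy elements, finite $E_n\subseteq p^{-1}(x)$ with $|\partial_S E_n|/|E_n|\to0$. Fix $m$ so that $K_m$ contains the images of representative loops for the elements of $S$, trivialize $p$ over finitely many evenly covered balls covering $K_m$, and to each collar-thin $A$ (essentially supported, by Step~3, in $p^{-1}(K_m)$) attach the set $E\subseteq p^{-1}(x)$ of fiber points whose sheet over a fixed small ball lies mostly inside $A$. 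Bounded geometry over the compact base makes the number of sheets through a point and the combinatorial displacement induced by $S$ uniformly bounded, so that $|E|$ is comparable to $|A|_\vf$ and $|\partial_S E|$ is controlled by the thin-collar quantity $|A^r\setminus A|_\vf$; this yields $|\partial_S E_n|/|E_n|\to0$, and \cref{Folner} followed by \cref{amecom} gives amenability of $p$. The analytic input (renormalization, the gradient estimate, Buser, the concentration inequality) is comparatively soft once the hypotheses are in place; the delicate part I expect to be this last discretization, i.e.\ passing with uniform constants from a smooth thin-collar domain $A\subseteq M_1$ to a combinatorial F\o{}lner set in the fiber. The concentration of $A$ over $p^{-1}(K^R)$ is precisely what keeps that passage uniform — this is where the hypothesis $\lambda_{\ess}(S_0,M_0)>\lambda_0(S_0,M_0)$ is essential, and where Brooks originally invoked geometric measure theory, here replaced by Buser's volume-comparison argument.
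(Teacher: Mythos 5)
Your high-level architecture matches the paper's: renormalize with the lift of a positive $\lambda_0(S_0,M_0)$-harmonic function, get $h_\vf(M_1)=0$ from the modified Cheeger inequality, invoke a weighted Buser argument (relying on the Cheng--Yau gradient estimate for the uniform Harnack bounds) to produce bounded $A\subseteq M_1$ with $|A^r\setminus A|_\vf<\ve|A|_\vf$, and then exploit the gap $\lambda_{\ess}(S_0,M_0)>\lambda_0(S_0,M_0)$ to localize the problem over a compact $K\subseteq M_0$ and discretize. Where you diverge is in how a \emph{single good fiber} $x$ is located and in how the volume estimate is converted into a counting (F\o{}lner) estimate, and this is where your proposal has a real gap.

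In the paper's argument one does not need $A$ itself to concentrate over $p^{-1}(K)$. Instead, the thin-collar domain $A_n$ is smoothed into a Lipschitz test function $f_n$ (equal to $1$ on $A_n$, linearly decaying over the collar of width $r$), and one then considers, for $x\in K$, the \emph{fiber sums} $\sum_{y\in p^{-1}(x)}|\grad f_n(y)|^2$ and $\sum_{y\in p^{-1}(x)}f_n(y)^2$. The spectral-gap inequality
\begin{align*}
  \lambda_0(S_1,M_1\setminus p^{-1}(K))>\lambda_0(S_1,M_1)
\end{align*}
is used via a contradiction: if the fiber-sum ratio were $\ge\ve$ at every $x\in K$, one could multiply $f_n$ by a lifted cutoff $\chi$ supported in $M_0\setminus K$ and obtain a sequence with Rayleigh quotients still tending to $\lambda_0(S_1,M_1)$ but supported in $M_1\setminus p^{-1}(K)$, contradicting the displayed inequality. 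Hence for some $n$ and some $x\in K$ the fiber-sum ratio is small, and because $|\grad f_n|=1/r$ exactly on $A_n^r\setminus A_n$ this reads $|p^{-1}(x)\cap(A_n^r\setminus A_n)|<\ve r^2|p^{-1}(x)\cap A_n|$; taking $E_n=p^{-1}(x)\cap A_n$ (with $r$ large compared to the representative loops of $S$) gives F\o{}lner sets directly. No passage from $\vf$-volume to counting is needed: the fiber sums are already counting quantities.

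Your version replaces this with a global Agmon-type concentration (most of $|A|_\vf$ over $p^{-1}(K^R)$) followed by a sheet-wise discretization. Two things are not closed there. First, the concentration controls only the mass of $A$, not where the collar $A^r\setminus A$ sits, and it is a global statement: it does not by itself produce a particular $x$ at which the fiberwise ratio $|p^{-1}(x)\cap(A^r\setminus A)|/|p^{-1}(x)\cap A|$ is small. You would need to add an explicit Chebyshev/averaging step (integrating the fiber counts against $\vf_0^2\,\dv$ over $K^R$) to recover such an $x$, which is essentially what the paper's contradiction argument delivers in one stroke. Second, the sheet-based definition of $E$ (fiber points whose sheet lies ``mostly'' in $A$) requires uniform two-sided bounds on the $\vf$-volume of sheets and a uniform translation from ``$A$ intersects $A^r\setminus A$ near $B_y$'' to ``$y\in\partial_S E$'', and the mass of $A$ spilling outside $p^{-1}(K_m)$, however small, is not a priori negligible for $\partial_S E$. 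You flag the discretization as the delicate step, and you are right to; as written it does not yet produce the needed inequality $|\partial_S E_n|\le C\,|A^r\setminus A|_\vf$ with $|E_n|\ge c\,|A|_\vf$ in a way that survives the spillover. The paper avoids all of this by never discretizing $A$ directly, using the test function $f_n$ and the contradiction with the spectral gap to land immediately on a fiberwise counting estimate.
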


\begin{proof}[Sketch of the proof of \cref{BMP2 result} after \cite{BMP2}]
Assume that
\begin{align*}
	\lambda_{0}(S_{1},M_1) = \lambda_{0}(S_{0},M_0).
\end{align*}
Considering the renormalization $S_{\varphi}$ of $S_{1}-\lambda_0(S_0,M_0)$
with respect to the lift $\vf$ of a positive $\lambda_{0}(S_{0},M_{0})$-eigenfunction of $S_{0}$
(see \eqref{bottom2}),
we obtain that $\lambda_{0}(S_{\varphi},M_{1}) = 0$.
Then the modified Cheeger inequality yields that $h_{\varphi}(M_{1}) = 0$ (see \eqref{modche2}).
	
	Following the arguments of Buser, it follows that, for any $r > 0$ and $n \in \N$,
	there exists an open, bounded $A_{n} \subseteq M_{1}$
	such that $| A_{n}^{r} \smallsetminus A_{n} |_{\varphi} < |A_{n}|_{\varphi}/n$.
	For this, it is important that $\varphi$ satisfies uniform Harnack estimates,
	which, under our assumptions, follows from the Cheng-Yau gradient estimate
	(cf. \cite[Theorem 6]{CY}).
	Consider the compactly supported Lipschitz function 
	\begin{align*}
		f_n(y) = 
		\begin{cases}
			1 - d(y,A_{n})/r &\text{if $d(y,A_{n})< r$,} \\
			0  &\text{otherwise,}
		\end{cases}
	\end{align*}
	on $M_{1}$.
	It is straightforward to verify that the Raileigh quotients
	\begin{align*}
	  R_{S_{\varphi}}(f_{n}) = \frac{\la S_\vf f_n,f_n\ra_{L^2(M_1,\mu)}}{\|f_n\|_{L^2(M_1,\mu)}^2} \rightarrow 0
	\end{align*}
	or, equivalently,
	that $R_{S_{1}}(f_{n} \varphi) \rightarrow \lambda_{0}(S_{0},M_{0})$ as $n \rightarrow \infty$.
	
	The assumption that $\lambda_{0}(S_{0},M_{0}) < \lambda_{\ess}(S_{0},M_{0})$ implies
	that there exists a compact domain $K$ of $M_{0}$
	such that $\lambda_{0}(S_{0}, M_{0} \smallsetminus K) > \lambda_{0}(S_{0},M_{0})$.
	Since the restriction $p \colon M_{1} \smallsetminus p^{-1}(K) \to M_{0} \smallsetminus K$
	is a Riemannian covering of possibly non-connected manifolds, it follows that
	\begin{equation}\label{BMP2 inequality}
	\lambda_{0}(S_{1}, M_{1} \smallsetminus p^{-1}(K))
	\geq \lambda_{0}(S_{0}, M_{0} \smallsetminus K)
	> \lambda_{0}(S_{0},M_{0})
	= \lambda_{0}(S_{1},M_{1}).
	\end{equation}
	This shows that for any $\varepsilon > 0$, there exists $n \in \mathbb{N}$ and $x \in K$
	such that $f_n$ is differentiable in each $y\in p^{-1}(x)$ and such that
	\begin{align*}
	\sum_{y \in p^{-1}(x)} |\grad f_{n}(y)|^{2} < \varepsilon  \sum_{y \in p^{-1}(x)} f_{n}^{2}(y).
	\end{align*}
	Indeed, otherwise we would be able to cut off $f_{n}$ with a lifted function $\chi$
	and obtain that $\supp \chi f_{n} \subseteq M_{1} \smallsetminus p^{-1}(K)$
	and $\mathcal{R}_{S_{1}}(\chi f_{n}) \rightarrow \lambda_{0}(S_{1},M_{1})$
	in contradiction with \eqref{BMP2 inequality}.
	From the definition of $f_{n}$, it is easy to see that the above estimate gives
	that $|p^{-1}(x) \cap (A^{r} \smallsetminus A)| < \varepsilon r^{2} |p^{-1}(x) \cap A|$.
	This, together with \cref{Folner} and the fact that $K$ is bounded,
	shows that the covering is amenable. 
\end{proof}

Based on \cref{BMP2 result},
the second author was able to establish \cref{name} in full generality \cite{Po2, Po3}.
An important step of the proof is the establishment of an analogue
of Brooks's \cref{Brooks1}	for manifolds with boundary,
where we are interested in the Neumann spectrum of (the Laplacian of) the manifold.
To that end, recall that the \emph{bottom of the Neumann spectrum}
of a Riemannian manifold $M$ with boundary is given by
\begin{align}\label{bottom Neumann}
	\lambda_{0}^{N}(M) = \inf R(f)
\end{align}
with $R(f)$ as in \eqref{raylei} (and $V=0$),
where the infimum is taken over all non-zero $f \in C^{\infty}_{c}(M)$.
It should be noticed that the test functions do not have to satisfy any boundary condition.

\begin{thm}[Polymerakis, Theorem 4.1 of \cite{Po3}]\label{neuma}
	Let $p \colon M_{1} \to M_{0}$ be a Riemannian covering with $M_{0}$ compact and $M_{1}$ possibly non-connected. If $\lambda_{0}^{N}(M_{1}) = 0$, then the covering is amenable.
\end{thm}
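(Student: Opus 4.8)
\medskip
\noindent\emph{Proof idea.}
If the covering is finite, then the fiber $p^{-1}(x)$ is finite and the monodromy action, being an action on a finite set, is amenable; so we may assume that $M_{1}$ is noncompact. Since $M_{0}$ is compact, $\pi_{1}(M_{0},x)$ is finitely generated, and by \cref{Folner2} the monodromy action of $\pi_{1}(M_{0},x)$ on $p^{-1}(x)$ is amenable if and only if there are finite nonempty $E_{n}\subseteq p^{-1}(x)$ with $|\partial_{S}E_{n}|/|E_{n}|\to 0$ for a fixed finite generating set $S$. Fix a finite smooth triangulation $\mathcal{T}$ of $M_{0}$ and lift it to a triangulation $\tilde{\mathcal{T}}$ of $M_{1}$, and let $\mathcal{G}$ be the dual graph of $\tilde{\mathcal{T}}$, with one vertex per top-dimensional simplex and one edge per codimension-one face lying in $\mathring{M}_{1}$. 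Then $\mathcal{G}$ covers the finite dual graph $\mathcal{G}_{0}$ of $\mathcal{T}$ and is quasi-isometric to a Schreier graph of the monodromy action with respect to a finite generating set read off from $\mathcal{G}_{0}$; so it suffices to produce finite nonempty vertex sets $\Phi_{n}$ of $\mathcal{G}$ with $|\partial\Phi_{n}|/|\Phi_{n}|\to 0$. Since $p$ is a local isometry and $M_{0}$ is compact, $\tilde{\mathcal{T}}$ has bounded geometry: the top simplices of $M_{1}$ have volumes in a fixed interval $[v_{0},v_{1}]$ and intrinsic diameters at most some $r_{0}$, the graph $\mathcal{G}$ has valence bounded by some $V$, and $M_{1}$ inherits a lower Ricci bound from $M_{0}$.

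Next I would convert the spectral hypothesis into an isoperimetric one. By the Cheeger inequality for the bottom of the Neumann spectrum --- obtained by a coarea argument, taking $u=f^{2}$ for a test function $f\in C^{\infty}_{c}(M_{1})$ and noting that $\int_{M_{1}}|\grad u|\le 2\sqrt{R(f)}\int_{M_{1}}u$, so that some regular level set yields a smooth compact domain $D$ with $|\partial D\cap\mathring{M}_{1}|\le 2\sqrt{R(f)}\,|D|$ --- the hypothesis $\lambda_{0}^{N}(M_{1})=0$ gives $h^{N}(M_{1}):=\inf_{D}|\partial D\cap\mathring{M}_{1}|/|D|=0$, where $D$ runs over compact domains with smooth boundary (which may meet $\partial M_{1}$). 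Thus there are compact smoothly bounded domains $D_{n}\subseteq M_{1}$ with $|\partial D_{n}\cap\mathring{M}_{1}|/|D_{n}|\to 0$.

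One cannot combinatorialize the $D_{n}$ directly, because $\partial D_{n}$ may oscillate wildly and meet uncontrollably many simplices --- this is exactly the difficulty in Brooks's proof of \cref{Brooks1}, resolved there by geometric measure theory. Following Buser instead, I would use the lower Ricci bound and the Bishop--Gromov inequality to pass from the $D_{n}$ to bounded open sets $A_{n}\subseteq M_{1}$ with $|A_{n}^{2r_{0}}\setminus A_{n}|<|A_{n}|/n$; this is \cref{Buser}, needed here in the version for manifolds with boundary and the free-boundary Cheeger constant, which is where compactness of $M_{0}$ (equivalently, bounded geometry of $M_{1}$) is used. Then set $\Phi_{n}:=\{\tilde\sigma\mid\tilde\sigma\cap A_{n}\ne\emptyset\}$. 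If $\tilde\sigma\in\Phi_{n}$ has a neighbour $\tilde\sigma'\notin\Phi_{n}$ in $\mathcal{G}$, then $\tilde\sigma'$ is disjoint from $A_{n}$ yet lies within $2r_{0}$ of it, hence is a full simplex contained in $A_{n}^{2r_{0}}\setminus A_{n}$; distinct such simplices have disjoint interiors and volume at least $v_{0}$, and each accounts for at most $V$ vertices $\tilde\sigma$, so $|\partial\Phi_{n}|\le V\,|A_{n}^{2r_{0}}\setminus A_{n}|/v_{0}<V|A_{n}|/(v_{0}n)$. On the other hand $A_{n}\subseteq\bigcup_{\tilde\sigma\in\Phi_{n}}\tilde\sigma$ gives $|\Phi_{n}|\ge|A_{n}|/v_{1}$, whence $|\partial\Phi_{n}|/|\Phi_{n}|\le Vv_{1}/(v_{0}n)\to 0$. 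Transporting the $\Phi_{n}$ through the quasi-isometry to the Schreier graph of the monodromy action and invoking \cref{Folner2}, we conclude that the monodromy action, hence the covering $p$, is amenable.

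The main obstacle is the step after the coarea estimate: establishing the free-boundary (Neumann) version of \cref{Buser} for Riemannian coverings of a compact manifold, i.e.\ gaining uniform geometric control (collar volume, or bounded mean curvature) on near-minimizers of the ratio $|\partial D\cap\mathring{M}_{1}|/|D|$. This is where the whole circle of results is delicate, and where either geometric measure theory (as in Brooks) or the Bishop--Gromov comparison (as in Buser and in \cite{Po3}) must enter, now with the additional complication of the boundary $\partial M_{1}$. Everything else is bookkeeping: the coarea estimate, the bounded-geometry comparisons of volumes, diameters and valences, and the identification of $\mathcal{G}$ with a Schreier graph.
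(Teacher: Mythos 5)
Your approach is a direct attack on the free-boundary isoperimetric problem, and you correctly identify the crux yourself: you need a version of \cref{Buser} valid for manifolds with (possibly non-compact) boundary, controlling the free-boundary Cheeger ratio $|\partial D\cap\mathring{M}_{1}|/|D|$. That statement is not available in the paper, and it is not a cosmetic adaptation of Buser's lemma. Near $\partial M_{1}$ the tube-volume and Bishop--Gromov estimates that drive the boundaryless case degenerate: the collar of radius $r$ about $\partial D$ may leak across $\partial M_{1}$, and the slices whose volumes one wants to compare are truncated by $\partial M_{1}$ in a way a lower Ricci bound alone does not control. Absent this lemma, the step from ``$h^{N}(M_{1})=0$'' to ``there exist bounded open $A_{n}$ with $|A_{n}^{2r_{0}}\setminus A_{n}|<|A_{n}|/n$'' is unjustified, and with it the entire combinatorialization that follows.

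The paper avoids the free-boundary version of \cref{Buser} altogether by a doubling reduction. One first perturbs the metric of $M_{0}$ in a collar of $\partial M_{0}$ so that it becomes a product $g_{0}\times dr^{2}$ there, and lifts the new metric to $M_{1}$; since $M_{0}$ is compact the old and new metrics are uniformly equivalent, so $\lambda_{0}^{N}(M_{1})=0$ persists. One then forms the doubles $2M_{0}$ and $2M_{1}$ by gluing two copies along the boundary --- the product collar makes the doubled metrics smooth --- and $p$ extends to a Riemannian covering $2p\colon 2M_{1}\to 2M_{0}$ with $2M_{0}$ closed. Every $f\in C^{\infty}_{c}(M_{1})$ doubles by reflection to a function in ${\rm Lip}_{c}(2M_{1})$ with the same Rayleigh quotient, so $\lambda_{0}(2M_{1})=0$; now \cref{closed base}, whose proof runs along precisely the Buser lines you outline but in the boundary-free setting, gives amenability of $2p$, and \cref{amecom} then gives amenability of $p$. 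So your bookkeeping (dual graph, bounded geometry, Schreier graph, F\o{}lner) is reasonable, but the argument as written has a genuine gap that the paper sidesteps with the doubling trick; to make your route self-contained you would have to actually establish the free-boundary Buser lemma, which is the entire difficulty.
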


\begin{proof}[Sketch of proof of \cref{neuma} after \cite{Po3}]
A first observation is that the proof of \cref{BMP2 result}
also works in the case where $M_1$ is not connected.
Now by gluing cylinders along the boun\-daries of $M_0$ and $M_1$,
one obtains complete manifolds with bounds on their Ricci curvature.
Then \cref{neuma} follows from \cref{BMP2 result},
using appropriately chosen Schr\"{o}dinger operators on the new manifolds.
\end{proof}
	
\begin{proof}[Sketch of proof of \cref{name} after \cite{Po3}]
	Assume that $p$ is non-amenable and,
	to arrive at a contradiction, that
\begin{align*}
	\lambda_{0}(S_1,M_1) = \lambda_0(S_0,M_0).
\end{align*}
	According to \eqref{bottspec},
	there exists a sequence $(f_{n})_{n \in \mathbb{N}}$ in $C^{\infty}_{c}(M_{1})$
	with $L^2$-norm one and $R_{S_{1}}(f_{n}) \rightarrow \lambda_{0}(S_{1},M_1)$.
	Since the covering is non-amenable,
	we obtain from \cref{amecom} that there exists a smoothly bounded, compact domain $K \subseteq M_{0}$ 
	such that the covering $p \colon p^{-1}(K) \to K$ is non-amenable.
	Then \cref{neuma} yields that the bottom of the Neumann spectrum of the Laplacian
	satisfies $\lambda_{0}^{N}(p^{-1}(K)) > 0$.
	This allows us to cut off the functions $f_{n}$ and obtain a sequence with the same properties
	(also denoted by $(f_{n})$) such that $\supp f_{n} \cap p^{-1}(K) = \emptyset$.
	
	It is easy to see that the sequence $(g_{n})_{n \in\N}$ in $\text{Lip}_{c}(M_{0})$ 
	consisting of the pushdowns of $f_{n}$ (defined in \eqref{pushd}) satisfies $\| g_{n} \|_{L^{2}} = 1$, $\supp g_{n} \cap K = \emptyset$, 
	and $R_{S_{0}}(g_{n}) \rightarrow \lambda_{0}(S_{0},M_{0})$.
	The assumption that $\lambda_{0}(S_{0},M_{0}) < \lambda_{\ess}(S_{0},M_{0})$ yields that,
	after passing to a subsequence if necessary,
	we have $g_{n} \rightarrow \varphi$ in $L^{2}(M_{0})$ for some positive $\lambda_{0}(S_{0},M_{0})$-harmonic (with respect to $S_{0}$) function $\varphi \in C^{\infty}(M_{0})$.	This is a contradiction, since such a $\varphi$ is positive,
	whereas $\supp g_{n} \cap K = \emptyset$.
\end{proof}

\subsection{Simplification of the proof of \cref{name}}
\label{susim}
We now discuss a simplified proof for this theorem.
The point of this alternative proof is a simpler way of getting \cref{neuma},
not using \cref{BMP2 result}, but relying only on an extension of \cref{Brooks1}.

Let $p \colon M_{1} \to M_{0}$ be a Riemannian covering of complete manifolds and consider $x \in M_{1}$. For $y \in p^{-1}(x)$, the \emph{Dirichlet domain} of $p$ centered at $y$ is defined to be
\begin{align*}
	D_{y} 
	= \{ z \in M_{1} \mid d(z,y) \leq d(z,y^{\prime}) \text{ for any } y^{\prime} \in p^{-1}(x) \}.
\end{align*}
For $r > 0$ we denote by $G_{r}$ the finite subset of $g\in\pi_{1}(M_{0},x)$
such that $g$ contains a loop of length at most $r$.

\begin{thm}[Polymerakis, Theorem 6.1 in \cite{Po}]\label{closed base}
Let $p \colon M_{1} \to M_{0}$ be a Riemannian covering with $M_{0}$ closed and $M_{1}$ possibly non-connected.
Then $\lambda_{0}(M_{1}) = 0$ implies that $p$ is amenable.
\end{thm}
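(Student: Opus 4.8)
\emph{Strategy.} The statement is the non-normal, possibly disconnected analogue of the nontrivial implication of Brooks's \cref{Brooks1} (the converse being supplied by \cref{monot} and \cref{tame}, since $\lambda_0(M_0)=0$). The plan is to adapt Brooks's proof of \cref{Brooks1}, making two substitutions: the translates of a fundamental domain are replaced by the Dirichlet domains $D_y$, $y\in p^{-1}(x)$, which tile $M_1$ even when $p$ is not normal; and the appeal to geometric measure theory (used by Brooks to pass from a Cheeger minimizer to a domain with controlled boundary) is replaced by Buser's \cref{Buser}.

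\emph{Setup and reduction.} Since $M_0$ is closed, $\lambda_0(M_0)=0$, and $M_1$, being a Riemannian covering of $M_0$, is complete with Ricci curvature bounded from below. Fix $x\in M_0$, write $X=p^{-1}(x)$ and let $\Gamma=\pi_1(M_0,x)$ act on $X$ by monodromy; $\Gamma$ is finitely generated because $M_0$ is closed, and for $r_0$ large enough (e.g.\ $r_0=3\diam(M_0)$) the finite set $S:=G_{r_0}$ generates $\Gamma$. By \cref{Folner2}, it suffices to produce, for each $\ve>0$, a finite non-empty $E\subseteq X$ with $|\partial_SE|/|E|<C\ve$ for a constant $C$ independent of $\ve$. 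From $\lambda_0(M_1)=0$ and the Cheeger inequality \eqref{chein} (which holds as stated also for disconnected $M_1$) we get $h(M_1)=0$.

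\emph{A fat set via Buser.} Applying \cref{Buser} (with radius $2r$ and $\ve$) and then replacing the resulting set by a suitable open $r$-neighbourhood of it, one obtains, by a routine argument, the following two-sided refinement: for every $r,\ve>0$ there is a bounded open $A\subseteq M_1$ whose full $r$-collar $\{z\in M_1\mid d(z,\partial A)\le r\}$ has measure $<\ve|A|$. (This two-sided smallness of the collar is precisely the information that Brooks had to extract from Cheeger minimizers using geometric measure theory.) Because $M_0$ is closed, the Dirichlet domains $D_y$, $y\in X$, partition $M_1$ up to measure zero, have diameter $\le 2\diam(M_0)$, and each contains a metric ball of a fixed positive radius (via the local isometry $p$ and the positivity of the injectivity radius of $M_0$); hence their volumes satisfy $0<v_1\le|D_y|\le v_2<\infty$ uniformly.

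\emph{The comparison and conclusion.} Given $\ve>0$, apply the previous step with $r:=4\diam(M_0)+r_0$ to get such a set $A$, and put $E=\{y\in X\mid D_y\cap A\ne\emptyset\}$ and $\hat A=\bigcup_{y\in E}D_y$, so that $A\subseteq\hat A\subseteq A^{2\diam(M_0)}$. Essential disjointness of the $D_y$ gives $|E|\ge|A|/v_2$. If $y\in\partial_SE$, pick $g\in S$ with $y\cdot g\notin E$; then $D_{y\cdot g}$ has positive volume outside $\hat A$, so it contains a point $w\notin A$ with $d(w,A)\le d(w,y\cdot g)+d(y\cdot g,y)+d(y,A)\le 3\diam(M_0)+r_0$, whence the whole of $D_{y\cdot g}$ lies in the full $r$-collar of $A$. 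The map $y\mapsto y\cdot g$ is at most $|S|$-to-one and sends $\partial_SE$ into $\tilde E:=\{z\in X\mid D_z\subseteq\text{full $r$-collar of }A\}$; since the $D_z$, $z\in\tilde E$, are essentially disjoint subsets of the collar, $v_1|\tilde E|\le\ve|A|$, and therefore $|\partial_SE|\le|S|\,|\tilde E|<|S|\ve|A|/v_1$. Combining, $|\partial_SE|/|E|<(|S|v_2/v_1)\,\ve$. Letting $\ve\to 0$ verifies the F\o{}lner condition of \cref{Folner2}, so the monodromy action is amenable, i.e.\ $p$ is amenable; every step was carried out in the full fiber $X$ and used only the disconnected versions of the Cheeger inequality and of \cref{Buser}, so disconnectedness of $M_1$ causes no difficulty. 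The main obstacle is exactly the comparison step — translating continuous isoperimetric smallness into combinatorial F\o{}lner smallness on the fiber — which is where one must control the boundary of the competing domains on both sides; Brooks handled this with geometric measure theory, whereas here it is dispatched by feeding in Buser's volume-comparison set, after which the uniform geometry of the Dirichlet domains (a consequence of compactness of $M_0$) closes the argument.
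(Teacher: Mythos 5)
Your proof is correct and takes essentially the same route as the paper's: Cheeger's inequality gives $h(M_1)=0$, Buser's \cref{Buser} produces a bounded set $A$ with small collar, and the Dirichlet domains $D_y$ (of diameter $\le 2\diam M_0$ and, in fact, volume exactly $|M_0|$) convert this into a F\o{}lner estimate on the fiber $p^{-1}(x)$. One superfluity worth noting: your ``fat set'' upgrade of Buser to a two-sided collar estimate is never actually used, since $y\cdot g\notin E$ already forces $D_{y\cdot g}\cap A=\emptyset$ and hence $D_{y\cdot g}\subseteq A^r\setminus A$, i.e.\ only the outer collar that \cref{Buser} directly controls is needed, which is exactly how the paper argues.
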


Strictly speaking, allowing that $M_1$ need not be connected is not contained in \cite{Po}.
It is, however, only a trivial extension of \cite[Theorem 6.1]{Po}
and will be used in our argument below.

\begin{proof}[Sketch of the proof of \cref{closed base} after \cite{Po}]
Assume that $\lambda_{0}(M_{1}) = 0$.
	Then the Cheeger inequality implies that $h(M_{1}) = 0$.
	In virtue of \cref{Buser},
	it follows that, for any $\ve> 0$ and $r> 2\diam M_0$,
	there exists a bounded $A \subseteq M_1$ such that
	\begin{equation*}
	| A^{3r} \smallsetminus A | < \varepsilon |A|.
	\end{equation*} 
	Consider the finite set $F := p^{-1}(x) \cap A^{r}$.
	From the fact that
	\begin{align*}
	\diam D_{y} \le 2\diam M_{0} < r
	\end{align*}
we readily see that $A$ is contained in $\cup_{y\in F}D_{y}$.
It is not hard to see that for $g \in G_{r}$ and $y \in Fg \setminus F$,
we have that $D_{y}$ is contained in $A^{3r} \setminus A$.
Using that $|D_{y}| = |M_{0}|$
and that the intersection of different $D_{y}$'s is of measure zero,
we deduce that
 \[
	|Fg \smallsetminus F| \leq |M_{0}|^{-1} |A^{3r} \smallsetminus A| < \varepsilon |M_{0}|^{-1}|A| \leq \varepsilon |F|
\]
for any $g \in G_{r}$.
We conclude from \cref{Folner} that the covering is amenable, since $\varepsilon > 0$ is arbitrary
and any finite $G \subset \pi_{1}(M_{0})$ is contained in $G_{r}$ for some $r > 2 \text{diam}(M_{0})$.
\end{proof}

\begin{proof}[Another proof of \cref{neuma}]
Change the given Riemannian metric of $M_0$ in a neighborhood $U\cong\partial M_0\times[0,\ve)$
of $\partial M_0$ so that the new metric is a product metric $g_0\times dr^2$ on $U$
and endow $M_1$ with the lifted metric.
Since $M_0$ is compact, the old and new Riemannian metrics on $M_0$ and $M_1$ are uniformly equivalent,
and hence also $\lambda_0^N(M_1)=0$ with respect to the new metric.

Denote by $2M_0=M_0\hat{\cup}M_0$ the manifold obtained by gluing two copies of $M_0$
along their common boundary, and define $2M_1$ correspondingly.
Since the new metrics from above are product metrics in neighborhoods of the boundaries,
they fit together to define (smooth) Riemannian metrics on  $2M_0$ and $2M_1$
so that $p$ extends to a Riemannian covering $2p\colon2M_1\to2M_0$.
Since $\lambda_0^N(M_1)=0$ with respect to the new metric
and test functions in $C^{\infty}_c(M_1)$ can be doubled to test functions in ${\rm Lip}_c(2M_1)$
with the same Rayleigh quotient, we get that $\lambda_0(2M_1)=0$.
Since $2M_0$ is closed, we conclude from \cref{closed base} that the covering $2p$ is amenable.
By \cref{amecom}, $p$ is then also amenable.
\end{proof}

\section{The case of hyperbolic manifolds}
\label{sechype}

We say that a Riemannian manifold $M$ of dimension $m$ is \emph{real hyperbolic}
if it is a quotient of real hyperbolic space $H_\R^m$ by a discrete group $\Gamma$  of isometries of $H_\R^m$.
Then the \emph{critical exponent} $\delta(M)$ is the infimum of the set of $s\in\R$ such that the Poincar\'e series
\begin{align*}
  g(x,y,s) = \sum\nolimits_{\gamma\in\Gamma} e^{-sd(x,\gamma(y))}
\end{align*}
converges for all $x,y\in H_\R^m$.
By \cite[Theorem 2.17]{Su}, we have
\begin{align*}
  \lambda_0(M) =
  \begin{cases}
  \delta(M)(m-1-\delta(M)) &\text{if $\delta(M)\ge(m-1)/2$}, \\
  \lambda_0(H_\R^m) = (m-1)^2/4 &\text{if $\delta(M)\le(m-1)/2$}.
  \end{cases}
\end{align*}
In particular, $\lambda_0(M)<\lambda_0(H_\R^m)$ if and only if $\delta(M)>(m-1)/2$.

We say that $M=\Gamma\backslash H_\R^m$ is \emph{geometrically finite (in the classical sense)}
if the covering $H_\R^m\to M$ admits a fundamental domain
which is bounded by finitely many hyperbolic hyperplanes.
In this case, the critical exponent $\delta(\Gamma)$ agrees with the Hausdorff dimension
of the limit set of $\Gamma$; compare with \cite[Theorem 2.21]{Su}.

Closed real hyperbolic manifolds are geometrically finite with $\lambda_{\ess}(M)=\infty$.
Since ends of non-closed real hyperbolic manifolds of finite volume are cusps,
they are geometrically finite with $\lambda_{\ess}(M)=(m-1)^2/4$.
Finally, by the work of Lax and Phillips,
$\lambda_{\ess}(M)=(m-1)^2/4$ if $M$ is geometrically finite of infinite volume \cite[p.\;281]{LP}.
In conclusion, if $M$ is geometrically finite in the classical sense,
then the assumptions of \cref{name} are satisfied if and only if $\delta(M)>(m-1)/2$.

Consider now any of the hyperbolic spaces $H_\R^m$, $H_{\C}^n$, $H_{\H}^n$, or $H_{\O}^2$.
Let $H$ be one of them, and let $m$ be the dimension of $H$.
Normalize the metric of $H$ so that the maximum of the sectional curvature of $H$ is $-1$.
Then the volume entropy of $H$, that is,
the exponential growth rate of the volume of metric balls in $H$, is given by
\begin{align*}
  h(H) =
\begin{cases}
  m-1 &\text{if $H=H_\R^m$}, \\
  m &\text{if $m=2n$ and $H=H_\C^n$}, \\
  m+2 &\text{if $m=4n$ and $H=H_\H^n$}, \\
  22 &\text{if $m=16$ and $H=H_\O^2$}. \\
\end{cases}
\end{align*}
Recall that $\lambda_0(H)=h(H)^2/4$.

Consider a quotient $M=\Gamma\backslash H$, also called  a \emph{hyperbolic manifold},
where $\Gamma$ is a discrete group of isometries of $H$.
Let $\Omega=S\setminus\Lambda$, where $S$ denotes the sphere at infinity of $H$
and $\Lambda\subseteq S$ the limit set of $\Gamma$.
Following Bowditch \cite{Bo2},
we say that $M$ is \emph{geometrically finite}
if $\Gamma\backslash(H\cup\Omega)$ has finitely many ends
and each of them is a cusp.
In the case of real hyperbolic manifolds, geometric finiteness in the classical sense as defined above 
implies geometric finiteness as defined here; see \cite[pages 289 and 302f]{Bo1}.
If $M$ is geometrically finite, then  $\lambda_{\ess}(M)=\lambda_0(H)$
\cite[Theorem]{Ha} or \cite[Theorem 1.1]{Li}.
Together with \cref{name}, we get the following consequence.

\begin{thm}\label{gefin}
Let $p\colon M_1\to M_0$ be a Riemannian covering of quotients of $H$, where $M_0$ is geometrically finite.
Then there are the following two cases:
\begin{enumerate}
\item\label{gefia}
If $\lambda_0(M_0)<\lambda_0(H)$, then $\lambda_0(M_1) = \lambda_0(M_0)$
if and only if $p$ is amenable.
\item\label{gefib}
If $\lambda_0(M_0)=\lambda_0(H)$, then $\lambda_0(M_1) = \lambda_0(M_0)$.
\end{enumerate}
\end{thm}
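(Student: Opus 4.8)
The strategy is to feed the geometric-finiteness hypothesis into the dichotomy already established by Theorems \ref{tame} and \ref{name}. Everything hinges on the identity $\lambda_{\ess}(M_0)=\lambda_0(H)$ for geometrically finite $M_0$, which is quoted above from \cite{Ha}/\cite{Li}, together with the monotonicity $\lambda_0(M_1)\ge\lambda_0(M_0)$ from \cref{monot}. First I would note that $p$ lifts the Laplacian of $M_0$ to the Laplacian of $M_1$, so the abstract results of the preceding sections apply with $S_0=\Delta$, $S_1=\Delta$.

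For case \ref{gefia}, assume $\lambda_0(M_0)<\lambda_0(H)=\lambda_{\ess}(M_0)$. This is precisely the hypothesis $\lambda_{\ess}(S_0,M_0)>\lambda_0(S_0,M_0)$ of \cref{name}. If $p$ is amenable, \cref{tame} gives $\lambda_0(M_1)=\lambda_0(M_0)$. Conversely, if $p$ is not amenable, \cref{name} gives $\lambda_0(M_1)>\lambda_0(M_0)$; contrapositively, $\lambda_0(M_1)=\lambda_0(M_0)$ forces $p$ amenable. Combining the two directions yields the stated equivalence.

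For case \ref{gefib}, assume $\lambda_0(M_0)=\lambda_0(H)$. By \cref{monot} we have $\lambda_0(M_1)\ge\lambda_0(M_0)=\lambda_0(H)$. On the other hand $M_1=\Gamma_1\backslash H$ is itself a quotient of $H$, and $\lambda_0$ of any quotient of $H$ is at most $\lambda_0(H)$ (apply \cref{monot} to the covering $H\to M_1$, or simply recall that $\lambda_0$ is monotone non-increasing under passing to larger groups and $\lambda_0(H)$ is the maximum). Hence $\lambda_0(M_1)\le\lambda_0(H)=\lambda_0(M_0)$, and equality holds unconditionally. No amenability hypothesis is needed here, which is why the statement in this case is an equality rather than an equivalence.

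The only genuinely nontrivial inputs are external: the value of $\lambda_{\ess}(M_0)$ for geometrically finite quotients (already cited) and \cref{name} itself. Given those, there is no real obstacle — the proof is a two-line case distinction. The one point that deserves a word of care is the claim $\lambda_0(M_1)\le\lambda_0(H)$ in case \ref{gefib}: it is immediate from \cref{monot} applied to the covering $H\to M_1$ (which exists because $M_1$ is a quotient of $H$), using that $\lambda_0$ of simply connected $H$ equals $h(H)^2/4$ as recalled above.
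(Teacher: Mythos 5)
Your proof is correct and is essentially the argument the paper intends: the authors present \cref{gefin} as a direct consequence of $\lambda_{\ess}(M_0)=\lambda_0(H)$ for geometrically finite $M_0$ together with Theorems \ref{tame}, \ref{name}, and \ref{monot}, without spelling out the two-case distinction. Your handling of case \ref{gefib} via the sandwich $\lambda_0(M_0)\le\lambda_0(M_1)\le\lambda_0(H)$, using \cref{monot} applied to $H\to M_1$, is exactly the right (and simplest) way to see it, since $M_1$ is itself a quotient of $H$ by hypothesis.
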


\cref{gefin} extends \cite[Theorem 1.6]{BMP2}, which dealt with real hyperbolic manifolds
which are geometrically finite in the classical sense.

\begin{rem}\label{remcoco}
Let $M=\Gamma\backslash H_\R^m$ be real hyperbolic manifold
which is geometrically finite in the classical sense.
We say that $M$ is \emph{convex cocompact} if it does not have cusps or, equivalently,
if $\Gamma$ does not contain parabolic isometries.
\cref{gefin}.\ref{gefia} for normal coverings $p\colon M_1\to M_0$
of real hyperbolic manifolds with $M_0$ convex cocompact is due to Brooks (\cite[Theorem 3]{Br2}).
The main point in his proof is to show that the covering $p$
admits a fundamental domain satisfying the isoperimetric inequality required in \cref{Brooks3}.
Another proof in this case was obtained by Roblin and Tapie
who show in \cite[Th\'eor\`eme 5.1]{RT} that the covering $p$
then admits a fundamental domain satisfying the requirements of \cref{RoTa}.
\end{rem}

\begin{rem}\label{remhype}
The case $\lambda_0(M_0)=\lambda_0(H)$ in \cref{gefin}.\ref{gefib}
yields examples of non-amenable Riemannian coverings
$p\colon M_1\to M_0$ of hyperbolic manifolds where the strict inequality $\lambda_0(M_1) > \lambda_0(M_0)$ fails.
A first example $M_0$, where this can happen, is described in \cite[Section 1]{Br2}.
Namely, let $\Gamma$ be the discrete group of motions of $H_\R^3$
generated by the reflections about three disjoint hyperbolic planes in $H_\R^3$,
which mutually touch at infinity.
Then the limit set of $\Gamma$, and any subgroup $\Gamma_0$ of finite index in $\Gamma$,
is the circle in the sphere at infinity,
which passes through the three points of tangency of the hyperbolic planes at infinity.
If such a subgroup $\Gamma_0$ is torsion-free,
then the quotient $M_0=\Gamma_0\backslash H_\R^3$ is a convex cocompact real hyperbolic manifold
with critical exponent equal to one (the Hausdorff dimension of the limit set),
and therefore is of the kind required in \cref{gefin}.\ref{gefib}.
Now $\Gamma$ is a finitely generated linear group,
and hence it has plenty of torsion free subgroups $\Gamma_0$ of finite index.

Finally, for any non-elementary hyperbolic manifold $M=\Gamma\backslash H$
and independently of $\lambda_0(M)$,
the normal Riemannian covering $H\to M$ is non-amenable
since $\Gamma$ contains the free subgroup in two generators as a subgroup. 
\end{rem}

\begin{exa}\label{exa00}
Let $S_n$, $n\ge1$, be a compact hyperbolic surface with two boundary geodesics,
which have length $1/n$ and $1/(n+1)$,
and glue them consecutively along the boundary geodesics of common length
to obtain a hyperbolic surface $S'$ of infinite type with one boundary geodesic of length one.
Attach a hyperbolic surface $S_0$ with one boundary circle of length one to $S'$
to obtain a  hyperbolic surfaces $S$ of infinite type.
The Lipschitz test functions $f_n$ on $S$
(varying the test functions used by Buser to get small eigenvalues)
such that $f_n=0$ outside the piece $S_n$,
$f_n=1$ in $S_n$ outside the collar $C_n\subseteq S_n$ of radius one about $\partial S_n$,
and $f_n(x)=d(x,\partial S_n)$ on $C_n$,
have Rayleigh quotients tending to zero as $n\to\infty$.
Hence we have $\lambda_{\ess}(S)=\lambda_0(S)=0$.
\end{exa}

\begin{exa}\label{exabcd}
We vary \cite[Example 4.1]{BCD} of Buser, Colbois, and Dodziuk.
Let $S_0$ be a closed hyperbolic surface
and $C$ be a union of $n\ge2$ disjoint simple closed geodesics
$c_1,\dots,c_n$ of respective lengths $\ell_1,\dots,\ell_n$
such that $S_{0}\setminus C$ is connected.
Cut $S_0$ along $C$ to obtain a compact and connected hyperbolic surface $F_0$
with $2n$ boundary geodesics $c_k^\pm$.

Let $T_n$ be the tree, all of whose vertices have valence $2n$.
Label the outgoing edges of each vertex by the $2n$ numbers $\pm k$, $1\le k\le n$,
such that the two labels of any edge of $T_n$ have the same absolute value,
but opposite signs.
Then the free group $\Gamma$ with $n$ generators $a_1,\dots,a_n$ acts on the vertices of $T_n$
such that $a_k^{\pm1}x=y$ if the outgoing edge from $x$ with label $\pm k$
is equal to the outgoing edge of $y$ with label $\mp k$.

For each vertex $x$ of $T_n$, let $F_x$ be a copy of $F_0$.
Glue back the boundary geodesic $c_k^\pm$ of $F_x$,
but now to the boundary geodesic $c_k^\mp$ of $F_y$, if $a_k^{\pm1}x=y$.
In this way, we obtain a hyperbolic surface $S$
on which $\Gamma$ acts isometrically as a group of covering transformations with quotient $S_0$.
Since $\Gamma$ is infinite, we conclude that $\lambda_{\ess}(S)=\lambda_0(S)$.
On the other hand, $\lambda_{\ess}(S_0)=\infty>\lambda_0(S_0)=0$.
Since $n\ge2$, $\Gamma$ is non-amenable,
hence \cite[Theorem 2]{Br2} (or \cref{name} above) implies that $\lambda_0(S)>0$.

By choosing the hyperbolic metric on $S_0$ appropriately,
we can make the lengths $\ell_1,\dots,\ell_n$ as small as we please,
and with them also $\lambda_0(S_0)$.
In particular, we can get $\lambda_0(S)<1/4=\lambda_0(H_\R^2)$
and then the bottom of the spectrum falls strictly
under the universal covering $H_\R^2\to S$,
although \cref{name} does not apply because $\lambda_{\ess}(S)=\lambda_0(S)$;
compare with \cref{question}.
The strict inequality $\lambda_0(M_1)<\lambda_0(H)$ actually holds
for any non-simply connected normal covering space $M_1$ of any closed hyperbolic manifold $M_0$; see \cite[Corollary 5]{Sa}.

In \cite[Example 4.1]{BCD},
Buser, Colbois, and Dodziuk also obtain examples of hyperbolic surfaces $S$ of infinite type
with arbitrarily small $\lambda_{\rm ess}(S)>0$
and infinitely many eigenvalues below $\lambda_{\rm ess}(S)$.
In particular, $\lambda_{\rm ess}(S)>\lambda_0(S)$
and $\lambda_0(S)>0$ (since $|S|=\infty$ and $\lambda_{\rm ess}(S)>0$).
Their construction and arguments extend to the above examples. 
\end{exa}

\begin{thm}\label{thmkk}
Let $p\colon M_1\to M_0$ be a Riemannian covering
of complete and connected Riemannian manifolds.
Assume that there is a geometrically finite hyperbolic manifold $M_0'=\Gamma\backslash H$ of infinite volume
such that $M_0\setminus K$ is isometric to $M_0'\setminus K'$
for some compact domains $K\subseteq M_0$ and $K'\subseteq M_0'$.
Then $\lambda_{\ess}(M_0)=\lambda_0(H)$,
and there are the following two cases:
\begin{enumerate}
\item\label{kka}
If $\lambda_0(M_0)<\lambda_0(H)$, then $\lambda_0(M_1) = \lambda_0(M_0)$
if and only if $p$ is amenable.
\item\label{kkb}
If $\lambda_0(M_0)=\lambda_0(H)$, then $\lambda_0(M_1) = \lambda_0(M_0)$.
\end{enumerate}
\end{thm}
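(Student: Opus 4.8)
The plan is to reduce the theorem to \cref{monot}, \cref{tame}, and \cref{name}, together with the known value of the essential spectrum of a geometrically finite hyperbolic manifold of infinite volume; throughout I take $S_0=S_1=\Delta$. First I would determine $\lambda_{\ess}(M_0)$: since $\lambda_{\ess}$ of a complete manifold equals $\sup_L\lambda_0(M_0\setminus L)$ over compact $L\subseteq M_0$ and $M_0$ is isometric to $M_0'$ outside a compact set, the values $\lambda_0(M_0\setminus L)$ for $L\supseteq K$ coincide with the corresponding values for $M_0'$, whence $\lambda_{\ess}(M_0)=\lambda_{\ess}(M_0')=\lambda_0(H)$ by \cite{Ha,Li} (using that $M_0'$, having infinite volume, is noncompact). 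In particular $\lambda_0(M_0)\le\lambda_0(H)$, so cases \ref{kka} and \ref{kkb} are exhaustive. In case \ref{kka} we have $\lambda_0(M_0)<\lambda_{\ess}(M_0)$, so \cref{name} applies and gives $\lambda_0(M_1)>\lambda_0(M_0)$ when $p$ is non-amenable, while \cref{tame} gives $\lambda_0(M_1)=\lambda_0(M_0)$ when $p$ is amenable; together this is the asserted equivalence.

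Case \ref{kkb} is the substantive one, since there $\lambda_{\ess}(M_0)=\lambda_0(M_0)$ and \cref{name} is unavailable. Here \cref{monot} already gives $\lambda_0(M_1)\ge\lambda_0(M_0)=\lambda_0(H)$, so by \eqref{bottspec} it suffices, for each $\ve>0$, to produce a nonzero $f\in C^\infty_c(M_1)$ with $R(f)<\lambda_0(H)+\ve$. The idea is to exhibit inside $M_1$ isometric copies of arbitrarily large metric balls of $H$, lying arbitrarily far out, and to transplant test functions. To this end, note that since $M_0'$ has infinite volume its domain of discontinuity $\Omega$ is nonempty (otherwise all ends of $M_0'$ would be cusps and $M_0'$ would have finite volume). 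Picking $\xi\in\Omega$ and a geodesic ray in $H$ towards $\xi$, its image in $M_0'$ eventually exits every compact set, and --- since no nontrivial element of $\Gamma$ fixes $\xi$ (parabolic and hyperbolic fixed points lie in the limit set, and $\Gamma$ is torsion-free) while $\Gamma$ acts properly discontinuously --- the injectivity radius of $M_0'$ along this ray tends to infinity. Consequently $M_0'\setminus K'$, and hence the isometric region $M_0\setminus K$, contains for every $R$, and arbitrarily deep in the end, a metric ball isometric to a ball of radius $R$ in $H$; being simply connected it lifts isometrically to $M_1$, its lifts lying arbitrarily far out. Transplanting into the lift of such a ball, with $R$ large, a compactly supported function on $H$ of Rayleigh quotient below $\lambda_0(H)+\ve$ (which exists by the definition of $\lambda_0(H)$) yields the desired $f$; hence $\lambda_0(M_1)\le\lambda_0(H)$, and $\lambda_0(M_1)=\lambda_0(H)=\lambda_0(M_0)$.

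The main obstacle is precisely the geometric input in case \ref{kkb}: converting the hypothesis that the ends of $M_0$ are those of a geometrically finite hyperbolic manifold into the existence of large isometrically embedded pieces of $H$, arbitrarily far out in $M_1$. Its core is the claim that the injectivity radius blows up along rays aimed at the domain of discontinuity, which rests on the fixed-point and proper-discontinuity discussion indicated above; the remainder is bookkeeping with \cref{monot}, \cref{tame}, \cref{name}, and the stability of the essential spectrum under removal of compact sets.
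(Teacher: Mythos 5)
Your proof is correct and follows the same overall plan as the paper: compute $\lambda_{\ess}(M_0)=\lambda_0(H)$ via stability of the essential spectrum and the Hamenst\"adt--Li result, handle case~\ref{kka} by combining \cref{tame} and \cref{name}, and in case~\ref{kkb} derive $\lambda_0(M_1)\le\lambda_0(H)$ by locating, arbitrarily deep in the ends of $M_1$, balls isometric to balls of arbitrary radius in $H$ and transplanting test functions there; monotonicity then closes the argument. The one place where you diverge from the paper is the geometric lemma underlying case~\ref{kkb}. The paper works from the convex core: since $|M_0'|=\infty$ forces $\partial C\neq\emptyset$, it picks a small simply connected $D\subset\partial C$, forms the funnel $E$ of points whose nearest-point projection to $C$ lands in $D$, and uses the exponential spreading of $E$ to produce large embedded balls far from $C\cup K'$. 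You instead pass to the boundary at infinity: nonempty $\Omega$ (again from $|M_0'|=\infty$), a ray $\gamma\to\xi\in\Omega$, and an injectivity radius blow-up argument combining proper discontinuity of $\Gamma$ on $H\cup\Omega$ with the fact that no nontrivial element of $\Gamma$ fixes $\xi$. The two routes are essentially dual descriptions of the same funnel ends (the convex core complement versus the domain of discontinuity), and both land on the same conclusion. Your version is perhaps slightly more self-contained in that it reduces to the standard dichotomy $\min_{g\neq 1}d(\gamma(t),g\gamma(t))\to\infty$, though the sketch of that step (finitely many problematic $g$'s by proper discontinuity on a compact neighborhood of $\xi$ in $H\cup\Omega$; for each such $g\neq1$, $g\xi\neq\xi$ implies the orbit distance diverges; for the rest, use a slightly shrunken neighborhood to get a uniform lower bound) deserves to be spelled out, as you acknowledge. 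The paper's funnel description is closer to the picture used elsewhere in \cref{sechype} and requires no discussion of isometry types of the $g$'s.
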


If $\mathring{K}\ne\emptyset$,
then the condition  $\lambda_0(M_0)<\lambda_0(H)$ is easy to achieve
by modifying the metric on $K$ appropriately.

\begin{proof}
Since the essential spectrum of $M_0$ and $M_0'$ is determined by their geometry at infinity,
we have $\lambda_{\ess}(M_0)=\lambda_{\ess}(M_0')=\lambda_0(H)$.
Therefore \cref{name} applies to Riemannian coverings of $M_0$
if $\lambda_0(M_0)<\lambda_0(H)$.

Since $M_0'$ is geometrically finite and the volume $|M_0'|=\infty$,
the boundary $\partial C$ of the convex core $C$ of $M_0'$ is non-empty.
Let $D\subset\partial C$ be any (small) simply connected domain,
and let $E$ be the set of points in $M_0'\setminus C$ such that any $x\in E$
has a minimal geodesic connection to $C$ with tip in $D$.
Then $E$ contracts onto $D$ and grows exponentially with the distance to $D$.
Hence given any $r>0$,
there is a ball of radius $r$ in $E$, which is isometric to a ball of radius $r$ in $H$,
that does not intersect the $r$-neighborhood of $C$ and $K'$.
Such a ball has its sibling in $M_0$, and hence the covering space $M_1$ also contains such balls.
Therefore $\lambda_0(M_1)\le\lambda_0(H)$,
and hence $\lambda_0(M_1)=\lambda_0(M_0)$, by monotonicity.
\end{proof}

\appendix
\section{Remarks on differential operators}
\label{frido}

Let $E$ be a vector bundle (real or complex) over a Riemannian manifold $M$.
Albeit inconsistent, it will be convenient
to denote the space of smooth sections of $E$ by $C^\infty(E)$ or,
if need arises, by $C^\infty(M,E)$.
Similar conventions will be in force for other spaces of sections of $E$.

Assume that $E$ is endowed with a Riemannian metric and a compatible connection.
Let $\mu=\vf^2\dv$ be a weighted measure on $M$,
where $\vf\in C^\infty(M)$ is strictly positive.
In this appendix, we use the shorthands
\begin{align*}
	\la u,v\ra_\mu = \la u,v\ra_{L^2(E,\mu)}
	\quad\text{and}\quad
	\|u\|_\mu = \|u\|_{L^2(E,\mu)}
\end{align*}
for $L^2$-products and norms with respect to $\mu$.

Let $L$ be a differential operator of order $k$ on (smooth sections of) $E$.
Then there is a unique differential operator $L^{\ad}$ of order $k$ on $E$,
the \emph{formal adjoint of $L$ (with respect to $\mu$)} such that
\begin{align*}
	\la Lu,v\ra_\mu = \la u,L^{\ad}v\ra_\mu
\end{align*}
for all $u,v\in C^\infty(E)$ such that $\supp u\cap\supp v$ is compact.
We say that $L$ is \emph{formally self-adjoint (with respect to $\mu$)} if $L=L^{\ad}$.
We say that \emph{$L$ is bounded from below (with respect to $\mu$)} if
\begin{align*}
	\la Lu,u\ra_\mu \ge \beta\|u\|_\mu^2
\end{align*}
for all $u\in C^\infty_c(E)$.
Then we call $\beta$ a \emph{lower bound of $L$ (with respect to $\mu$)} and write $L\ge\beta$.
We say that $L$ is \emph{non-negative (with respect to $\mu$)} if $L\ge0$.
Clearly, $L\ge\beta$ if and only if $L-\beta\ge0$.

The \emph{principal symbol of $L$}, frequently written in terms of local coordinates,
can also be written as
\begin{align}\label{psymbol}
	\sigma_L(df)u
	= \frac1{k!} [\dots [L,\underbrace{m_f],\dots m_f]}_{\text{$k$ times}} u,
\end{align}
where $f\in C^\infty(M)$, $u\in C^\infty(E)$, and $m_f$ denotes multiplication with $f$.
Recall that $L$ is of \emph{Laplace type} if it is of order two and its principal symbol is given by
\begin{align}\label{lapty}
	\sigma_L(df)u = \frac1{2}[[L,m_f],m_f]u = - |\grad f|^2u 
\end{align}
for all $f\in C^\infty(M)$ and $u\in C^\infty(E)$.
Laplace type operators are elliptic.

Denoting the connection on $E$ by $\nabla$ (as any other connection),
the formal adjoint $\nabla^{\ad}$ of $\nabla$ is given by
\begin{align}\label{forad}
\begin{split}
	\nabla^{\ad}v
	&= - \sum \{ (\nabla_{X_i}v)(X_i) + 2X_i(\ln\vf)v(X_i) \} \\
	&= - \dive v - 2v(\grad\ln\vf)
\end{split}
\end{align}
for any $v\in C^\infty(T^*M\otimes E)$,
where $(X_i)$ is a local orthonormal frame of $TM$.
Clearly, the \emph{Bochner-Laplacian (associated to $\nabla$ and $\mu$)}, given by
\begin{align}\label{lapty2}
	\Delta_\mu = \nabla^{\ad}\nabla,
\end{align}
is an operator of Laplace type on $E$.
More generally,
we will also study differential operators of \emph{generalized Laplace type},
that is, $L$ is elliptic and of the form
\begin{align}\label{diver}
	L = A^{\ad}A + B,
\end{align}
where $A$ is of order one and $B$ of order at most one.
By definition, $A^{\ad}A$ is formally self-adjoint.
More precisely, we have
\begin{align}\label{diver2}
  \la A^{\ad}Au,v \ra_\mu
  = \la Au,Av \ra_\mu
  = \la u,A^{\ad}Av\ \ra_\mu
\end{align}
for all $u,v\in C^\infty(E)$ such that $\supp u\cap\supp v$ is compact.
Obviously, $L$ is formally self-adjoint if and only $B$ is.

\begin{lem}
If a differential operator $B$ of order one is formally self-adjoint,
then $\sigma_B(\omega)$ is a skew-symmetric (resp.\ skew-Hermitian) field of endomorphisms of $E$,
for any one-form $\omega$ on $M$.
\end{lem}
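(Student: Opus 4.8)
The plan is to recover the pointwise skew-adjointness of the symbol from the global identity $\la Bu,v\ra_\mu=\la u,Bv\ra_\mu$ by testing it on sections of the form $fu$ with $f$ real valued. First I would record the case $k=1$ of \eqref{psymbol}: for $f\in C^\infty(M)$ and $u\in C^\infty(E)$,
\begin{align*}
  \sigma_B(df)u = [B,m_f]u = B(fu) - fBu .
\end{align*}
Now fix a real valued $f$ and sections $u,v\in C^\infty(E)$ with $\supp u\cap\supp v$ compact. Since $f$ is real, $m_f$ is self-adjoint on $L^2(E,\mu)$, so $\la fw,w'\ra_\mu=\la w,fw'\ra_\mu$ for all $w,w'$; combining this with the product rule above and applying formal self-adjointness of $B$ to the pairs $(fu,v)$ and $(u,fv)$ (note $\supp(fu)\subseteq\supp u$, so the support conditions are met), I would compute
\begin{align*}
  \la Bu,fv\ra_\mu + \la \sigma_B(df)u,v\ra_\mu
  &= \la fBu,v\ra_\mu + \la B(fu)-fBu,v\ra_\mu
  = \la B(fu),v\ra_\mu \\
  &= \la fu,Bv\ra_\mu = \la u,fBv\ra_\mu
  = \la u,B(fv)\ra_\mu - \la u,\sigma_B(df)v\ra_\mu \\
  &= \la Bu,fv\ra_\mu - \la u,\sigma_B(df)v\ra_\mu .
\end{align*}
Cancelling $\la Bu,fv\ra_\mu$ leaves
\begin{align*}
  \la \sigma_B(df)u,v\ra_\mu = -\la u,\sigma_B(df)v\ra_\mu
\end{align*}
for all such $u,v$.

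The second step is to localize this to a pointwise statement. Since $\sigma_B(df)$ is a (smooth) field of endomorphisms of $E$, i.e.\ a zeroth order operator, the integrand of the last identity is the pointwise quantity $\bigl(\la\sigma_B(df)u,v\ra+\la u,\sigma_B(df)v\ra\bigr)\vf^2$. Taking $u,v$ of the form $\rho u_0,\rho v_0$ with $\rho$ a bump function whose support shrinks to a given point $x\in M$, the vanishing of the integral forces
\begin{align*}
  \la \sigma_B(df)_x\xi,\eta\ra + \la \xi,\sigma_B(df)_x\eta\ra = 0
\end{align*}
for all $\xi,\eta\in E_x$, that is, $\sigma_B(df)_x$ is skew-symmetric (resp.\ skew-Hermitian). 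Finally, every covector at $x$ is of the form $(df)_x$ for some real valued $f\in C^\infty(M)$, so $\sigma_B(\omega)_x$ is skew-symmetric (resp.\ skew-Hermitian) for every one-form $\omega$ and every $x\in M$, as claimed.

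I expect the only real subtlety to be the localization step, namely making precise that the integrated identity descends to the pointwise one because $\sigma_B(df)$ has order zero; the rest is bookkeeping with the symbol calculus and the reality of $f$. Alternatively, one could argue more briefly from the general fact that the formal adjoint of a first order operator has principal symbol $\sigma_{B^{\ad}}(\omega)=-\sigma_B(\omega)^{\ad}$ and then invoke $B=B^{\ad}$, but establishing that symbol identity amounts to essentially the computation above.
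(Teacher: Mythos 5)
Your proof is correct and follows essentially the same route as the paper's: the paper observes in one line that the commutator $\sigma_B(df)=[B,m_f]$ of the symmetric operators $B$ and $m_f$ (for real $f$) is skew-symmetric, whereas you spell out that computation explicitly and add the (routine, but fair to make explicit) localization step passing from the integrated identity to the pointwise one.
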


\begin{proof}
For any $f\in C^\infty_c(M)$,
multiplication $m_f$ with $f$ is a symmetric (resp.\ Hermitian) operator on $L^2(E,\mu)$.
Hence the commutator $\sigma_B(df)=[B,m_f]$ is a skew-symmetric (resp.\ skew-Hermitian) field
of endomorphisms of $E$ if $B$ is formally self-adjoint.
\end{proof}

\begin{exa}\label{exalop}
The  \emph{Hodge-Laplacian} $(d+d^{\ad})^2=dd^{\ad}+d^{\ad}d$
on the bundle of differential forms over $M$ is a Laplace type operator
(where the formal adjoint $d^{\ad}$ of $d$ is taken with respect to $\mu=\dv$).
More generally,
the square $A^2$ of the Dirac operator $A$ on a Dirac bundle over $M$ is a Laplace type operator.
Since $A$ is formally self-adjoint with respect to $\dv$,
$A^2$ is of generalized Laplace type with $B=0$ (see \eqref{diver}).
By \eqref{diver2},
$A^2$ is formally self-adjoint and non-negative with respect to $\dv$.
\end{exa}

\begin{exa}\label{exasop}
An operator on $E$ of the form $S=\Delta_\mu+V$,
where the \emph{potential} $V$ is a field of endomorphisms of $E$,
is called a \emph{Schr\"odinger operator (with respect to $\mu$)}.
It is formally self-adjoint if and only if its potential is a symmetric (resp. Hermitian) field of endomorphisms of $E$.
If $V\ge0$, then $S$ is non-negative.
Hodge-Laplacians and, more generally, squares of Dirac operators,
are Schr\"odinger operators with respect to $\dv$,
where the potential is a curvature term.
\end{exa}

\subsection{Friedrichs extension}
\label{susfri}
Let $L$ be a differential operator of order $k$ on $E$
which is formally self-adjoint and bounded from below with respect to the weighted measure $\mu=\vf^2\dv$.

Choose a lower bound $\beta$ for $L$,
and let $H_L$ be the completion of $C^\infty_c(E)$ with respect to the product
\begin{align}\label{h1}
	\la u,v \ra_L = \la u,v \ra_\mu + \la(L-\beta)u,v \ra_\mu.
\end{align}
We consider $H_L$ as a subspace of $H=L^2(E,\mu)$,
endowed with its own, stronger inner product.
Up to equivalence, $\la.,.\ra_L$ does not depend on the choice of $\beta$,
and hence $H_L$ does not depend on the choice of $\beta$ either.

\begin{thm}\label{friex}
If $L\ge\beta$, then the Friedrichs extension $\bar L$ of $L$
is a nonnegative self-adjoint extension of $L$ with $\bar L=L^*$ on its domain
\begin{align*}
	D(\bar L)
	= H_L \cap D(L^*) \subseteq L^2(E,\mu).
\end{align*}
Furthermore,
\begin{enumerate}
\item\label{friexh1}
for any $u\in D(\bar L)$,
we have $\la u,v\ra_L = \la u,v\ra_\mu + \la(\bar L-\beta)u,v\ra_\mu$ for all $v\in H_L$;
\item\label{friexes}
if $L^*-\lambda$ is injective for some $\lambda<\beta$, then $L$ is essentially self-adjoint,
and its closure coincides with $\bar L$.
\end{enumerate}
\end{thm}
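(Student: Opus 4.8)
The plan is to prove \cref{friex} by the classical Friedrichs construction adapted to the weighted $L^2$-space, carried out in three stages. First I would set up the closed symmetric bilinear form $Q(u,v)=\la u,v\ra_L = \la u,v\ra_\mu + \la(L-\beta)u,v\ra_\mu$ on the form domain $H_L$, which is by definition the completion of $C^\infty_c(E)$ with respect to $\la.,.\ra_L$, regarded as a subspace of $H=L^2(E,\mu)$ via the natural map. One must check that this map is injective, so that $H_L$ really sits inside $H$; this follows because $\|u\|_\mu \le \|u\|_L$ on $C^\infty_c(E)$, so a Cauchy sequence in $\|.\|_L$ that goes to $0$ in $\|.\|_\mu$ must have $L$-limit $0$ as well (using that $\la(L-\beta)u,u\ra_\mu\ge 0$ makes $\|.\|_L$ comparable to the graph-type norm and that the limit, being simultaneously an $H$-limit and an $H_L$-limit, is identified). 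Independence of $H_L$ and of the equivalence class of $\la.,.\ra_L$ from the choice of $\beta$ is immediate from $\beta\le\beta'$ giving $\la(L-\beta)u,u\ra_\mu \le \la(L-\beta')u,u\ra_\mu + (\beta'-\beta)\|u\|_\mu^2$ and symmetrically.

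Next I would apply the abstract form version of the spectral theorem (Friedrichs representation theorem): a densely defined, closed, symmetric, nonnegative form $Q-\beta\langle.,.\rangle_\mu$ with form domain $H_L$ dense in $H$ determines a unique self-adjoint operator $\bar L$ with $\bar L\ge\beta$, whose domain $D(\bar L)$ consists of those $u\in H_L$ for which $v\mapsto \la(L-\beta)u,v\ra_\mu$ — more precisely $v\mapsto \la u,v\ra_L - \la u,v\ra_\mu$ — extends to a bounded functional on $H$, and then $(\bar L-\beta)u$ is the Riesz representative. This gives \ref{friexh1} directly: for $u\in D(\bar L)$ and $v\in H_L$ one has $\la u,v\ra_L = \la u,v\ra_\mu + \la(\bar L-\beta)u,v\ra_\mu$ by construction and density. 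To identify $D(\bar L)=H_L\cap D(L^*)$ and $\bar L=L^*|_{D(\bar L)}$: if $u\in D(\bar L)$, then for all $v\in C^\infty_c(E)$ we get $\la u, Lv\ra_\mu = \la u, (L-\beta)v\ra_\mu + \beta\la u,v\ra_\mu = \la(\bar L-\beta)u,v\ra_\mu + \beta\la u,v\ra_\mu = \la \bar L u, v\ra_\mu$ using formal self-adjointness of $L$ and part \ref{friexh1}, so $u\in D(L^*)$ with $L^*u=\bar Lu$; conversely if $u\in H_L\cap D(L^*)$ then $v\mapsto\la L^*u - \beta u,v\ra_\mu$ is bounded on $H$ and agrees with $\la u,v\ra_L-\la u,v\ra_\mu$ for $v\in C^\infty_c(E)$, hence for all $v\in H_L$ by density, so $u\in D(\bar L)$ — this shows $\bar L=L^*$ on $D(\bar L)=H_L\cap D(L^*)$, and that $\bar L$ extends $L$ since $C^\infty_c(E)\subseteq H_L$ with $\la u,v\ra_L - \la u,v\ra_\mu = \la(L-\beta)u,v\ra_\mu$ bounded in $v$.

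For part \ref{friexes}, suppose $L^*-\lambda$ is injective for some $\lambda<\beta$. The strategy is the standard deficiency-index argument: $L$ is essentially self-adjoint and $\bar L$ equals its closure if and only if $\bar L$ is the only self-adjoint extension, equivalently if $\ker(L^*-\lambda)=0$ for the (non-real, or here conveniently real) $\lambda$ below the spectrum. Since $\bar L\ge\beta>\lambda$, the operator $\bar L-\lambda$ is invertible, so $L^*-\lambda = (\overline{L})^* \ominus \ldots$ — more carefully, one uses that $D(L^*)=D(\bar L)\oplus\ker(L^*-\lambda)$ as a consequence of $\bar L-\lambda$ being bijective onto $H$ together with $L^*$ being the adjoint of the closure $\bar L_{\min}$ of $L$ (which satisfies $\bar L_{\min}\subseteq\bar L\subseteq L^*$); injectivity of $L^*-\lambda$ then forces $\ker(L^*-\lambda)=0$, hence $D(L^*)=D(\bar L)$, hence $\bar L_{\min}=\bar L=L^*$. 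I expect the main obstacle to be the bookkeeping in this last step — tracking the three operators $\bar L_{\min}\subseteq\bar L\subseteq L^*$ and justifying the von Neumann-type decomposition $D(L^*)=D(\bar L)\dotplus\ker(L^*-\lambda)$ with a real $\lambda$ below $\beta$ (rather than the usual $\lambda=\pm i$), which works precisely because $\bar L-\lambda$ is positive definite and boundedly invertible. Everything else is the routine transcription of the Friedrichs construction to the weighted Hilbert space $L^2(E,\mu)$, where $\mu=\vf^2\dv$ plays no role beyond fixing the inner product.
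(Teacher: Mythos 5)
The paper states \cref{friex} as a classical fact and does not supply a proof, so there is no authorial argument to compare against. Your proposal is the standard Friedrichs construction via closed quadratic forms, and it is correct in substance and in all the places that matter for the statement.

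A few remarks on precision. Your justification of the injectivity of the canonical map $H_L\to L^2(E,\mu)$ (the closability of the form) is not quite the right reason: $\|.\|_L$ is in general \emph{not} comparable to a graph norm on $H_L$, since the form domain is typically strictly larger than the operator domain. The actual reason is the formal self-adjointness of $L$ on the core $C^\infty_c(E)$: if $(u_n)\subset C^\infty_c(E)$ is $\|.\|_L$-Cauchy with $u_n\to0$ in $L^2(E,\mu)$, one writes $\|u_n\|_L^2 = \la u_n,u_n-u_m\ra_L + \la u_n,u_m\ra_L$, estimates the first term by Cauchyness and observes that the second term equals $\la u_n,u_m\ra_\mu + \la u_n,(L-\beta)u_m\ra_\mu\to0$ as $n\to\infty$ for fixed $m$, whence $\limsup\|u_n\|_L=0$. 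This is exactly the closability of a densely defined, non-negative, symmetric form that comes from a symmetric operator, a textbook fact, so nothing essential is missing — but the parenthetical as written would not stand on its own. Your identification of $D(\bar L)$ with $H_L\cap D(L^*)$ and the verification of item \ref{friexh1} are correct and complete. For item \ref{friexes}, the line ``$L^*-\lambda=(\overline L)^*\ominus\dots$'' is a dead end and should simply be dropped; the argument you then actually give — the decomposition $D(L^*)=D(\bar L)\dotplus\ker(L^*-\lambda)$ for real $\lambda<\beta$, using that $\bar L-\lambda$ is boundedly invertible, $\bar L\subseteq L^*$, and that $\ker(L^*-\lambda)\cap D(\bar L)=\{0\}$, followed by $L_{\min}=(L^*)^*=\bar L^*=\bar L$ — is correct and is the efficient way to get essential self-adjointness from injectivity of $L^*-\lambda$ at a single real point below the lower bound.
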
 

We write $\sigma(L,M)$ for the spectrum and $\lambda_0(L,M)$ for the bottom of the spectrum of $\bar L$.
For a non-vanishing $u\in D(\bar L)$, we denote by
\begin{align}\label{ray}
  R(u) = R_L(u) = \la\bar Lu,u\ra_\mu/\la u,u\ra_\mu = \|u\|_L^2/\|u\|_\mu + \beta -1
\end{align}
its \emph{Rayleigh quotient}.
By functional analysis, we have $\lambda_0(L,M)=\inf R(u)$,
where the infimum is taken over all non-zero $u\in D(\bar L)$.

\begin{cor}
If $L\ge\beta$, then $\lambda_0(L,E)=\inf R(u)$,
where the infimum is taken over all non-zero $u\in C^\infty_c(E)$.
\end{cor}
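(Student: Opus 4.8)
The plan is to read off the Corollary from the variational formula $\lambda_0(L,M)=\inf R(u)$ over non-zero $u\in D(\bar L)$ recorded right after \cref{friex}, combined with two structural facts from that theorem: that $D(\bar L)\subseteq H_L$, and that $H_L$ is, by construction, the completion of $C^\infty_c(E)$ in the norm $\|\cdot\|_L$ of \eqref{h1}. So the whole Corollary is a density statement.

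First I would dispose of the trivial inclusion: since $C^\infty_c(E)\subseteq D(\bar L)$, the infimum of $R$ over non-zero $u\in C^\infty_c(E)$ is automatically $\ge\inf R$ over non-zero $u\in D(\bar L)=\lambda_0(L,E)$. For the reverse inequality I would fix a non-zero $u\in D(\bar L)$ and, using $D(\bar L)\subseteq H_L$ together with the density of $C^\infty_c(E)$ in $H_L$, pick $u_n\in C^\infty_c(E)$ with $\|u_n-u\|_L\to 0$. The key elementary observation is that $\|\cdot\|_L$ dominates $\|\cdot\|_\mu$: from \eqref{h1}, $\|v\|_L^2=\|v\|_\mu^2+\la(L-\beta)v,v\ra_\mu\ge\|v\|_\mu^2$ for all $v$ (as $L\ge\beta$), whence $\|u_n-u\|_\mu\to 0$, so $\|u_n\|_\mu\to\|u\|_\mu>0$ and $\|u_n\|_L\to\|u\|_L$; in particular $u_n\ne 0$ for all large $n$. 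Then I would note that the Rayleigh quotient \eqref{ray} can be written as $R_L(w)=\|w\|_L^2/\|w\|_\mu^2+\beta-1$ both on $C^\infty_c(E)$ (directly from the definition \eqref{h1} of $\la\cdot,\cdot\ra_L$) and on $D(\bar L)$ (by \cref{friex}.\ref{friexh1} applied with $v=u$), so that
\[
 R_L(u_n)=\frac{\|u_n\|_L^2}{\|u_n\|_\mu^2}+\beta-1\ \longrightarrow\ \frac{\|u\|_L^2}{\|u\|_\mu^2}+\beta-1=R_L(u).
\]
Hence $\inf\{R(v)\mid v\in C^\infty_c(E),\,v\ne 0\}\le R_L(u)$; taking the infimum over all non-zero $u\in D(\bar L)$ gives $\inf_{C^\infty_c(E)}R\le\lambda_0(L,E)$, and combined with the trivial inclusion this yields equality.

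I do not expect a genuine obstacle here: the argument is a routine density/continuity argument. The only points that need a word are that the approximating sections $u_n$ are eventually nonzero and that $R_L$ varies continuously along the sequence, and both of these are immediate consequences of the domination $\|\cdot\|_\mu\le\|\cdot\|_L$ and the continuity of the two norms under $H_L$-convergence. (One should also remark that $R_L(u)=\la\bar Lu,u\ra_\mu/\|u\|_\mu^2$ is intrinsic, so the appearance of the auxiliary lower bound $\beta$ in the formula is harmless.)
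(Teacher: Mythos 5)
Your proof is correct and takes essentially the same route as the paper: approximate a non-zero $u\in D(\bar L)\subseteq H_L$ by $u_n\in C^\infty_c(E)$ in the $H_L$-norm and pass to the limit in the Rayleigh quotient using \eqref{ray}. You simply spell out the small details (that $\|\cdot\|_L$ dominates $\|\cdot\|_\mu$, hence $u_n\ne 0$ eventually, and both norms converge) that the paper leaves implicit.
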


\begin{proof}
Let $u\in D(\bar L)$ be non-zero.
Choose a sequence $(u_n)$ in $C^\infty_c(E)$ converging to $u$ with respect to the $H_L$-norm.
Then $R(u)$ is the limit of the sequence of $R(u_n)$, by \eqref{ray}.
\end{proof}

\subsection{Geometric Weyl sequences}
\label{suseweyl}
The essential spectrum $\sigma_{\ess}(A)$ of a self-adjoint operator $A$ on a Hilbert space $H$
consists of all $\lambda\in\R$ such that $A-\lambda\id$ is not a Fredholm operator.
The essential spectrum is a closed subset of the spectrum $\sigma(A)$ of $A$,
and its complement in $\sigma(A)$ consists of isolated eigenvalues of finite multiplicity.
By \emph{Weyl's Criterion}, $\lambda\in\R$ belongs to $\sigma_{\ess}(A)$ if and only if
there exists a sequence $(u_n)$ in the domain $D(A)$ such that, as $n\to\infty$,
\begin{enumerate}
\item
$\|u_n\|_H\to1$;
\item
$u_n\rightharpoonup0$ in $H$;
\item
$\|(A-\lambda)u_n\|_H\to 0$.
\end{enumerate}
Here $\|.\|_H$ denotes the norm of $H$.
Such a sequence is also called a \emph{Weyl sequence} for $\lambda$ or, more precisely, for $A$ and $\lambda$.

We let $L$ now be an elliptic differential operator of order $k$ on $E$
which is formally self-adjoint and bounded from below with respect to the weighted measure $\mu=\vf^2\dv$
and denote by $\bar L$ the Friedrichs extension of $L$.

For a relatively compact open domain $U\subseteq M$ with smooth boundary,
we denote by $H^k(U,E)$ the Sobolev space of sections of $E$ over $U$ with weak derivatives of order up to $k$ in $L^2(U,E)$.
We do not include $\mu$ into the notation of $H^k(U,E)$ since, on $U$,
the weight $\vf^2$ of $\mu$ is bounded between two positive constants.

\begin{thm}\label{weyl}
Assume that $L$ is formally self-adjoint and bounded from below.
Then $\lambda\in\R$ belongs to the essential spectrum $\sigma_{\ess}(L,M)$ of $\bar L$
if and only if there is a \emph{geometric Weyl sequence for $\lambda$},
that is, a sequence $(u_n)$ in $D(\bar L)$ such that
\begin{enumerate}
\item 
$\|u_n\|_\mu\to1$ as $n\to\infty$;
\item
for any compact subset $K\subseteq M$, $\supp u_n\subseteq M\setminus K$ eventually;
\item
$\|(\bar L-\lambda)u_n\|_\mu\to 0$ as $n\to\infty$.
\end{enumerate}
\end{thm}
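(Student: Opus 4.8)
The plan is to deduce both directions from Weyl's Criterion, the essential task being to trade a Weyl sequence for $\bar L$ for one whose supports escape to infinity, and conversely. For the implication ``geometric Weyl sequence $\Rightarrow\lambda\in\sigma_{\ess}(L,M)$'' I would observe that a geometric Weyl sequence $(u_n)$ becomes, after dividing by $\|u_n\|_\mu$, an ordinary Weyl sequence for $\bar L$: the properties $\|u_n\|_\mu\to1$, $u_n\in D(\bar L)$, and $\|(\bar L-\lambda)u_n\|_\mu\to0$ are built in, and the weak convergence $u_n\rightharpoonup0$ follows because, given $v\in L^2(E,\mu)$ and $\varepsilon>0$, one picks $w\in C^\infty_c(E)$ with $\|v-w\|_\mu<\varepsilon$ and notes that $\langle u_n,w\rangle_\mu=0$ as soon as $\supp u_n$ misses the compact set $\supp w$, so that $\limsup_n|\langle u_n,v\rangle_\mu|\le\varepsilon\sup_n\|u_n\|_\mu$. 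Weyl's Criterion then yields $\lambda\in\sigma_{\ess}(L,M)$.

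For the converse, start from an ordinary Weyl sequence $(u_n)\subseteq D(\bar L)$ for $\lambda$, so $\|u_n\|_\mu\to1$, $u_n\rightharpoonup0$, and $\bar Lu_n=\lambda u_n+r_n$ with $\|r_n\|_\mu\to0$. First I would record a local regularity and compactness fact. Since $\bar L\subseteq L^*$ and $L$ is formally self-adjoint, every $u\in D(\bar L)$ solves $Lu=\bar Lu$ in the distributional sense, so interior elliptic regularity --- a purely local statement, insensitive to whether $M$ is complete --- puts $u$ into $H^k_{\mathrm{loc}}(E)$ with interior estimates $\|u\|_{H^k(\Omega')}\le C(\|\bar Lu\|_{L^2(\Omega'')}+\|u\|_{L^2(\Omega'')})$ whenever $\Omega'\Subset\Omega''\Subset M$. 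Since $(\bar Lu_n)$ is $L^2$-bounded, this makes $(u_n)$ bounded in $H^k(\Omega)$ for every relatively compact $\Omega\Subset M$; combined with $u_n\rightharpoonup0$ in $L^2$ and the compact Rellich embedding $H^k(\Omega)\hookrightarrow H^{k-1}(\Omega)$, a routine subsequence argument gives $u_n\to0$ in $H^{k-1}(\Omega)$ for every such $\Omega$.

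Next I would push the $u_n$ to infinity by cut-offs. Fix an exhaustion $\Omega_1\Subset\Omega_2\Subset\cdots$ of $M$ and functions $\chi_j\in C^\infty_c(M)$ with $0\le\chi_j\le1$, $\chi_j\equiv1$ on $\overline{\Omega_j}$, and $\supp\chi_j\subseteq\Omega_{j+1}$, and put $v_{j,n}=(1-\chi_j)u_n$. The points to check, all for fixed $j$ as $n\to\infty$, are: (i) $v_{j,n}\in D(\bar L)$, with $\bar Lv_{j,n}=(1-\chi_j)\bar Lu_n-[L,m_{\chi_j}]u_n$; (ii) $\|v_{j,n}\|_\mu\to1$, because $\|\chi_ju_n\|_\mu\le\|u_n\|_{L^2(\Omega_{j+1})}\to0$; (iii) $\|(\bar L-\lambda)v_{j,n}\|_\mu\to0$, because $\|(1-\chi_j)(\bar L-\lambda)u_n\|_\mu\le\|(\bar L-\lambda)u_n\|_\mu\to0$ while $[L,m_{\chi_j}]$ is of order $\le k-1$ with coefficients supported in the compact set $\supp d\chi_j$, so $\|[L,m_{\chi_j}]u_n\|_\mu\le C_j\|u_n\|_{H^{k-1}(\supp d\chi_j)}\to0$ by the previous step; and (iv) $\supp v_{j,n}\subseteq M\setminus\Omega_j$. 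A diagonal choice $n\mapsto j(n)$ with $j(n)\to\infty$ growing slowly enough that (ii) and (iii) persist along it then exhibits $w_n:=v_{j(n),n}$ as a geometric Weyl sequence for $\lambda$, since $\supp w_n\subseteq M\setminus\Omega_{j(n)}$ eventually avoids any prescribed compact subset of $M$.

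The hard part --- the only point where one must argue rather than quote the complete case --- is (i), since there is no $C^\infty_c$-core available to make $\bar L$ act by the Leibniz rule on cut-offs. Here I would use the first step: $u_n\in D(\bar L)\subseteq H^k_{\mathrm{loc}}(E)$, hence $\chi_ju_n\in H^k_c(E)$. On the one hand this forces $\chi_ju_n\in H_L$, because on every relatively compact $\Omega$ one has $\|w\|_L\le C_\Omega\|w\|_{H^k(\Omega)}$ for $w\in C^\infty_c(\Omega,E)$ (as $L-\beta$ has order $k$ and $\varphi$ is bounded on $\overline\Omega$), so $H^k_0(\Omega,E)\subseteq H_L$. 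On the other hand the Leibniz identity $L(\chi_ju_n)=\chi_j\bar Lu_n+[L,m_{\chi_j}]u_n$, whose right-hand side lies in $L^2$, shows $\chi_ju_n\in D(L^*)$. By \cref{friex}, $\chi_ju_n\in H_L\cap D(L^*)=D(\bar L)$, hence so is $v_{j,n}=u_n-\chi_ju_n$, with the stated action of $\bar L$. Everything else --- the Rellich argument, the interior estimates, and the diagonal extraction --- is routine.
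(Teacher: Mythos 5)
Your proposal is correct and follows essentially the same route as the paper's proof: the easy direction via density of $C^\infty_c(E)$ in $L^2(E,\mu)$, and the converse via interior elliptic estimates plus Rellich to kill $u_n$ locally in $H^{k-1}$, the $H^k$-approximation argument to put $\chi_ju_n$ in $D(\bar L)=H_L\cap D(L^*)$, the commutator bound for $[L,m_{\chi_j}]$, and a diagonal extraction. The only (cosmetic) difference is that you make the $H_L\cap D(L^*)$ membership argument more explicit than the paper does.
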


\begin{proof}
Clearly, any geometric Weyl sequence $(u_n)$ converges weakly to $0$ in $H=L^2(E,\mu)$
and hence is a Weyl sequence for $\lambda$ as defined above.
Applying Weyl's Criterion, we conclude that $\lambda\in\sigma_{\ess}(L,M)$.

Conversely, let $\lambda\in\sigma_{\ess}(L,M)$ and $(u_n)$ be a Weyl sequence for $\lambda$.
Let $\psi\in C^{\infty}_{c}(M)$.
The main step of the proof consists in showing that,
after passing to a subsequence if necessary,
$((1-\psi)u_n)$ is a Weyl sequence for $\lambda$. 

Weak convergence to zero in $L^2(E,\mu)$ is easy to see:
For any $v\in L^2(E,\mu)$,
we have $\psi v,(1-\psi)v\in L^2(E,\mu)$ and
\begin{align*}
  \la \psi u_n,v\ra_\mu
  = \la u_n,\psi v\ra_\mu \to 0
  \quad\text{and}\quad
  \la (1-\psi)u_n,v\ra_\mu
  = \la u_n,(1-\psi) v\ra \to 0
\end{align*}
since $u_n\rightharpoonup0$ in $L^2(E,\mu)$.
Hence $\psi u_n\rightharpoonup0$ and $(1-\psi)u_n\rightharpoonup0$ in $L^2(E,\mu)$.

Choose relatively compact open domains $U\subset\subset V\subseteq M$ with smooth boundary containing $\supp\psi$. 
By the ellipticity of $L$, there is a constant $C_1$ such that
\begin{align*}
	\|u\|_{H^k(U,E)} \le C_1( \|\bar Lu\|_{L^2(V,E)} + \|u\|_{L^2(V,E)})
\end{align*}
for all $u\in H^k(V,E)$.
Furthermore, (the restriction to $V$ of) any $u\in D(\bar L)$ belongs to $H^k(V,E)$.
Since
\begin{align*}
	\|\bar Lu_n\|_\mu
	\le \|(\bar L-\lambda)u_n\|_\mu + \lambda \|u_n\|_\mu
	\le (\lambda+1) \|u_n\|_\mu
\end{align*}
for all sufficiently large $n$,
we conclude that $(u_n)$ is uniformly bounded in $H^k(U,E)$.
Hence there is a $u\in H^k(U,E)$ such that,
up to passing to a subsequence if necessary, $u_n\rightharpoonup u$ in $H^k(U,E)$.
Now $u_n\rightharpoonup 0$ in $L^2(E,\mu)$ and hence $u=0$.
Since $U$ is relatively compact,
the Rellich Lemma applies and shows that $\|u_n\|_{H^{k-1}(U,E)}\to 0$.
In particular, $\|\psi u_n\|_\mu\to0$ and, therefore, $\|(1-\psi)u_n\|_\mu\to1$.

Since $u_n\in H^k(U,E)$,
there is a sequence $v_{n,l}\in C^\infty(U,E)$ converging to $u_n$ in $H^k(U,E)$ as $l\to\infty$.
But then $\psi v_{n,l}$ converges to $\psi u_n$ in $H^k(U,E)$ as $l\to\infty$.
In particular, $\psi u_n\in D(\bar L)$ and
\begin{align*}
	\bar L(\psi u_n)
	= \psi\bar Lu_n + [\bar L,m_\psi]u_n.
\end{align*}
Here the commutator $[\bar L,m_\psi]$ is a differential operator of order at most $k-1\ge0$
with support contained in $\supp d\psi\subseteq U$.
We conclude that $(1-\psi)u_n\in D(\bar L)$ with
\begin{align*}
	\bar L((1-\psi)u_n)
	= (1-\psi)\bar Lu_n - [\bar L,m_\psi]u_n.
\end{align*}
Since $K=\supp d\psi\subseteq U$ is compact, there is a constant $C_2$ such that 
\begin{align*}
	\|[\bar L,m_\psi]u\|_{L^2(K,E)} \le C_2 \|u\|_{H^{k-1}(U,E)}
\end{align*}
for all $u\in H^{k-1}(U,E)$.
In conclusion,
\begin{align*}
	\|(\bar L-\lambda)((1&-\psi)u_n)\|_\mu \\
	&\le \|(1-\psi)(\bar L-\lambda)u_n\|_\mu + C_3\|[\bar L,m_\psi]u_n\|_{L^2(K,E)} \\
	&\le \|(\bar L-\lambda)u_n\|_\mu + C_2C_3\|u_n\|_{H^{k-1}(U,E)}
	\to 0,
\end{align*}
where $C_3$ estimates $\dv$ against $\mu$.
Now we choose a sequence of functions $0\le\psi_1\le\psi_2\le\dots\le1$ in $C^\infty_c(E)$
such that the compact subsets $\{\psi_n=1\}$ exhaust $M$ and obtain that,
after passing to appropriate subsequences and multiplications by cut-off functions $1-\psi_n$ consecutively,
we obtain a geometric Weyl sequence for $\lambda$.
\end{proof}

\begin{rem}\label{weylesa}
In the case where $L$ is essentially self-adjoint,
geometric Weyl sequences can be chosen to belong to $C^\infty_c(E)$.
\end{rem}

\begin{cor}\label{weylmk}
If $L$ is formally self-adjoint and bounded from below by $\beta$ on $E$ over $M$ and $K\subseteq M$ is compact,
then $L$ is formally self-adjoint and bounded from below by $\beta$ on $E$ over $M\setminus K$ and
\begin{align*}
	\sigma_{\ess}(L,M) \subseteq \sigma_{\ess}(L,M\setminus K).
\end{align*}
In particular, $\lambda_{\ess}(L,M)\ge\lambda_{\ess}(L,M\setminus K)$.
\end{cor}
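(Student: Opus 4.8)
The plan is to derive the corollary from the geometric Weyl sequence characterisation of the essential spectrum in \cref{weyl}. The first assertion needs no work: the defining identity $\la Lu,v\ra_\mu=\la u,Lv\ra_\mu$ and the estimate $\la Lu,u\ra_\mu\ge\beta\|u\|_\mu^2$ are demanded only for $u,v\in C^\infty_c(E)$, and $C^\infty_c(M\setminus K,E)\subseteq C^\infty_c(M,E)$, so $L$ is formally self-adjoint and bounded below by $\beta$ over $M\setminus K$ as well, and its Friedrichs extension $\bar L_{M\setminus K}$ exists. The concluding inequality $\lambda_{\ess}(L,M)\ge\lambda_{\ess}(L,M\setminus K)$ is just the passage to infima in the inclusion $\sigma_{\ess}(L,M)\subseteq\sigma_{\ess}(L,M\setminus K)$, so everything comes down to that inclusion.

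To prove it, I would pick $\lambda\in\sigma_{\ess}(L,M)$ and, using \cref{weyl}, a geometric Weyl sequence $(u_n)$ in $D(\bar L_M)$ for $\lambda$. Applying the support condition of the sequence to the compact set $K$ itself and discarding finitely many terms, I may assume $\supp u_n\cap K=\emptyset$ for all $n$; since $\supp u_n$ is closed and $K$ is compact, $d(\supp u_n,K)>0$. The aim is to show that $(u_n)$, read as a sequence of sections over $M\setminus K$, is a geometric Weyl sequence for $\bar L_{M\setminus K}$ and $\lambda$. Two of the three properties are free: $\|u_n\|_\mu$ is unchanged, and if $K'\subseteq M\setminus K$ is compact then $K\cup K'$ is compact in $M$, so $\supp u_n\subseteq M\setminus(K\cup K')$ eventually. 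The third, $\|(\bar L_{M\setminus K}-\lambda)u_n\|_\mu\to0$, will follow once one knows $u_n\in D(\bar L_{M\setminus K})$ with $\bar L_{M\setminus K}u_n=(\bar L_Mu_n)|_{M\setminus K}$, because then $(\bar L_{M\setminus K}-\lambda)u_n=(\bar L_M-\lambda)u_n$, a section again supported away from $K$ by locality of $L$. Then \cref{weyl} over $M\setminus K$ gives $\lambda\in\sigma_{\ess}(L,M\setminus K)$.

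The step I expect to be the main obstacle is precisely the claim $u_n\in D(\bar L_{M\setminus K})=H_L(M\setminus K)\cap D(L^*_{M\setminus K})$ of \cref{friex}. Membership in $D(L^*_{M\setminus K})$ is purely local: $L$ being a differential operator, testing against $\phi\in C^\infty_c(M\setminus K,E)$ shows that $L^*_{M\setminus K}(u_n|_{M\setminus K})=(L^*_Mu_n)|_{M\setminus K}\in L^2(E,\mu)$ over $M\setminus K$. For membership in the form domain I would take $w_j\in C^\infty_c(M,E)$ with $w_j\to u_n$ in $\|\cdot\|_L$, fix $\chi\in C^\infty(M)$ with $0\le\chi\le1$ vanishing near $K$ and equal to $1$ on a neighbourhood of $\supp u_n$ (possible since $d(\supp u_n,K)>0$), and observe that $\chi w_j\in C^\infty_c(M\setminus K,E)$ converges to $\chi u_n=u_n$ in $\|\cdot\|_L$; since this norm agrees over $M$ and over $M\setminus K$ on sections supported in $M\setminus K$, it follows that $u_n\in H_L(M\setminus K)$. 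The one input here is that multiplication by the fixed function $\chi$ is bounded on $H_L$, which for the (generalised) Laplace type operators $L=A^{\ad}A+B$ of interest is immediate from the Leibniz rule for the order-one factor $A$, and in general follows from the same interior elliptic estimates used in the proof of \cref{weyl}. With this in hand the inclusion of essential spectra, and hence the corollary, follows.
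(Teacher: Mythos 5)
Your proof is correct and takes essentially the same route as the paper: invoke \cref{weyl} to obtain a geometric Weyl sequence $(u_n)$ for $\lambda$ over $M$, use the escape condition to arrange $\supp u_n\subseteq M\setminus K$ for large $n$, and observe that these $u_n$ then form a geometric Weyl sequence over $M\setminus K$. The paper simply asserts that such $u_n$ lie in $D(\bar L_{M\setminus K})$, while you supply the verification via the positive-distance cut-off $\chi$ and the characterisation $D(\bar L)=H_L\cap D(L^*)$ from \cref{friex}.
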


\begin{proof}
Let $(u_n)$ be a geometric Weyl sequence for $\lambda\in\sigma_{\ess}(L,M)$.
Then $\supp u_n\subseteq M\setminus K$ for all sufficiently large $n$.
For such $n$, $u_n$ belongs to the domain of the Friedrichs extension of $L$ on $M\setminus K$.
Thus these $u_n$ constitute  a geometric Weyl sequence for the Friedrichs extension of $L$ on $M\setminus K$.
Thus $\sigma_{\ess}(L,M)\subseteq\sigma_{\ess}(L,M\setminus K)$.
\end{proof}

\subsection{Stability of the essential spectrum}
\label{susestab}
One might ask whether equality of essential spectra holds in \cref{weylmk}.
The problem is that the intersection of a compact subset of $M$ with $M\setminus K$ need not be compact in $M\setminus K$.
However, if $K$ is sufficiently regular, equality holds.
We show this for operators of generalized Laplace type (as in \eqref{diver}).
However, it seems that the same proof, with a bit more of technicalities,
goes through for elliptic operators as considered above.

\begin{thm}[Stability of the essential spectrum]\label{esstab}
Suppose that $L$ is of generalized Laplace type, $L=A^{\ad}A+B$.
Assume that $L$ is formally self-adjoint and bounded from below.
Let $K\varsubsetneq M$ be a compact domain with smooth boundary.
Then
\begin{align*}
	\sigma_{\ess}(L,M) = \sigma_{\ess}(L,M\setminus K).
\end{align*}
\end{thm}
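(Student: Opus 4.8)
By \cref{weylmk} we already have $\sigma_{\ess}(L,M)\subseteq\sigma_{\ess}(L,M\setminus K)$, so the plan is to prove the reverse inclusion. Fix $\lambda\in\sigma_{\ess}(L,M\setminus K)$. By \cref{weylmk} the restriction of $L$ to $M\setminus K$ is again formally self-adjoint and bounded from below, and of course it is still of generalized Laplace type $L=A^{\ad}A+B$; so \cref{weyl} supplies a geometric Weyl sequence $(u_n)$ in $D(\bar L_{M\setminus K})$ for $\lambda$, where $\bar L_{M\setminus K}$ and $\bar L$ denote the Friedrichs extensions of $L$ on $M\setminus K$ and on $M$, respectively. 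The strategy is to turn $(u_n)$ into a geometric Weyl sequence for $\bar L$ on $M$ by pushing its support off $\partial K$ and then extending by zero; the one point requiring care afterwards is that a compact set $K'\subseteq M$ meets $M\setminus K$ in a set that is compact in $M\setminus K$ as soon as $K'$ is kept at positive distance from $\partial K$.

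Since $\partial K$ is a compact smooth hypersurface in $M$, fix a closed outer collar $N\cong\partial K\times[0,\varepsilon]$ of $\partial K$, so that $N$ is compact, $N\cap(M\setminus K)\cong\partial K\times(0,\varepsilon]$, and $\partial K=\partial K\times\{0\}$; write $t$ for the collar coordinate and choose a smooth $\chi\colon M\setminus K\to[0,1]$ with $\chi=1$ on $\{0<t\le\varepsilon/3\}$ and $\supp\chi\subseteq\{0<t\le2\varepsilon/3\}$. For every integer $m\ge2$ the slab $\partial K\times[\varepsilon/m,2\varepsilon/3]$ is a compact subset of $M\setminus K$, so for $n$ large the support of $u_n$ misses it; hence $\supp(\chi u_n)\subseteq\partial K\times(0,\varepsilon/m)$ for $n$ large, and, since $\supp d\chi$ is also a compact subset of $M\setminus K$, moreover $[L,m_\chi]u_n=0$ for $n$ large. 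The crux of the proof is the claim that $\|\chi u_n\|_\mu\to0$.

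Granting the claim, set $v_n=(1-\chi)u_n$. Multiplication by the smooth bounded function $1-\chi$ preserves $D(\bar L_{M\setminus K})$, and $[L,m_\chi]u_n=0$ for $n$ large, so $v_n\in D(\bar L_{M\setminus K})$ with $\bar L_{M\setminus K}v_n=(1-\chi)\bar L_{M\setminus K}u_n$ eventually; thus $\|v_n\|_\mu\to1$ and $\|(\bar L_{M\setminus K}-\lambda)v_n\|_\mu=\|(1-\chi)(\bar L_{M\setminus K}-\lambda)u_n\|_\mu\to0$. Since $\supp v_n$ stays at distance at least $\varepsilon/3$ from $\partial K$, a routine argument — approximating $v_n$ in $H_L(M\setminus K)$ by sections in $C^\infty_c(M\setminus K)$ cut off by a fixed function equal to $1$ near $\supp v_n$ and vanishing near $\partial K$, and invoking the locality of $L^{*}$ (cf.\ \cref{friex}) — shows that the extension $\tilde v_n$ of $v_n$ by zero lies in $D(\bar L)$ on $M$, with $\bar L\tilde v_n$ equal to the extension by zero of $\bar L_{M\setminus K}v_n$. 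Hence $\|\tilde v_n\|_\mu\to1$ and $\|(\bar L-\lambda)\tilde v_n\|_\mu\to0$, while for any compact $K'\subseteq M$ the support of $\tilde v_n$ eventually avoids the set $\{x\in M\setminus K\mid d(x,\partial K)\ge\varepsilon/3\}\cap K'$, which is compact in $M\setminus K$. Thus $(\tilde v_n)$ is a geometric Weyl sequence for $\bar L$ and $\lambda$, and \cref{weyl} yields $\lambda\in\sigma_{\ess}(L,M)$.

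It remains to prove the claim, and this is the step I expect to be the main obstacle. For $n$ large, $\chi u_n$ is supported in $\partial K\times(0,\varepsilon/m)$ and lies in the form domain $H_L(M\setminus K)$, hence vanishes in the trace sense along $\partial K$; on the fixed compact collar $N$, on which the Riemannian densities and the weight $\vf^2$ are bounded above and below by positive constants, this gives the one-dimensional Poincar\'e estimate $\|\chi u_n\|_\mu^2\le C_1(\varepsilon/m)^2\|\nabla(\chi u_n)\|_\mu^2$. Since $L=A^{\ad}A+B$ is elliptic (see \eqref{diver}), the principal symbol of $A$ is injective, so a G\aa{}rding inequality on $N$ yields $\|\nabla(\chi u_n)\|_\mu^2\le C_2(\|A(\chi u_n)\|_\mu^2+\|\chi u_n\|_\mu^2)$. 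On the other hand, writing $\eta_n=\|(\bar L_{M\setminus K}-\lambda)u_n\|_\mu$ (so $\eta_n\to0$), using \eqref{diver2} and $\bar L_{M\setminus K}(\chi u_n)=\chi\bar L_{M\setminus K}u_n$ for $n$ large, and estimating $\la B(\chi u_n),\chi u_n\ra_\mu$ by means of $\|B(\chi u_n)\|_\mu\le C_3(\|A(\chi u_n)\|_\mu+\|\chi u_n\|_\mu)$ on $N$ and Young's inequality, one obtains $\|A(\chi u_n)\|_\mu^2\le C_4(\|\chi u_n\|_\mu^2+\eta_n)$. Combining the three estimates and fixing $m$ once and for all large enough to absorb all $\|\chi u_n\|_\mu^2$ terms on the right, one gets $\|\chi u_n\|_\mu^2\le C(\varepsilon/m)^2\eta_n\to0$. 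Conceptually, the claim records that the Friedrichs (Dirichlet) realization of $L$ admits no approximate null sequences concentrating near the smooth compact hypersurface $\partial K$; this is exactly where the smoothness of $\partial K$ and the generalized Laplace type structure of $L$ — which makes $H_L$ a first-order space and the Poincar\'e and G\aa{}rding estimates available — are used, and hence why the statement is phrased for this class of operators rather than for general elliptic $L$.
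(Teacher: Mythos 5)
Your proposal is correct, and it follows a genuinely different route from the paper's. The paper first proves (Lemma \ref{hl} and Proposition \ref{ll}) that the Friedrichs extension of $L$ on $\Omega=M\setminus K$ coincides with the one obtained from Dirichlet test sections $C^\infty_{c,0}(\bar\Omega,E)$, then takes an \emph{ordinary} Weyl sequence for $\bar L_{c,0}$ and cuts it off near $\partial K$ using global $H^2$ elliptic regularity for the Dirichlet problem together with Rellich compactness and the weak convergence $u_n\rightharpoonup 0$, exactly as in the proof of Theorem \ref{weyl}. You instead start from a \emph{geometric} Weyl sequence for $\bar L_{M\setminus K}$ (so the supports already flee every compact subset of $M\setminus K$, the only remaining danger being concentration near $\partial K$), and you rule out that concentration quantitatively: the trace-zero property of $H_L(M\setminus K)$ along $\partial K$ feeds a one-dimensional Poincar\'e inequality on a shrinking collar, which, combined with a G\aa{}rding estimate for the first-order operator $A$ and the identity $\la A^{\ad}Av,v\ra_\mu=\|Av\|_\mu^2$, yields $\|\chi u_n\|_\mu\to0$. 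What the paper's route buys is a clean reuse of the machinery already set up for Theorem \ref{weyl}, plus the independently interesting identification $\bar L_c=\bar L_{c,0}$; what your route buys is that you only need \emph{interior} elliptic estimates and the elementary Poincar\'e inequality, sidestepping both Lemma \ref{hl} and the $H^2$ boundary-regularity estimate for the Dirichlet realization. Both arguments genuinely exploit the generalized Laplace-type structure $L=A^{\ad}A+B$ (the paper through Lemma \ref{hl}, you through the G\aa{}rding estimate for $A$) and the Dirichlet boundary condition implicit in the Friedrichs extension. Two points you should spell out more fully to make the write-up airtight: (a) that $\|\cdot\|_L$ controls $\|Au\|_\mu$ (hence $H^1_{\rm loc}$), so the trace along $\partial K$ is well defined and zero for elements of $H_L(M\setminus K)$; this uses $|\la Bv,v\ra_\mu|\le C(\|Av\|_\mu+\|v\|_\mu)\|v\|_\mu$ and G\aa{}rding for $A$, and is the point where $L$'s structure really enters; (b) the extension-by-zero step needs the localization argument you sketch (cutting approximants with a fixed $\phi$ supported at positive distance from $\partial K$, using locality of $L^*$), which works precisely because $\supp v_n$ stays at distance $\ge\varepsilon/3$ from $\partial K$.
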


We assume throughout and without loss of generality that $L\ge0$.
We start with some preparatory steps. 

Let $\Omega\subseteq M$ be an open domain with compact and smooth boundary.
Let $C^\infty_{c,0}(\bar\Omega,E)$ be the space of $u\in C^\infty_c(\bar\Omega)$
such that $u$ satisfies the Dirichlet boundary condition $u=0$ on $\partial\Omega$.
Then $C^\infty_c(\Omega,E)$ is contained in $C^\infty_{c,0}(\bar\Omega,E)$,
and $L$ is symmetric on $C^\infty_{c,0}(\bar\Omega,E)$.

\begin{lem}\label{hl}
The closure $H_L(\Omega)$ of $C^\infty_c(\Omega,E)$ under the $\|.\|_L$-norm
contains $C^\infty_{c,0}(\bar\Omega,E)$.
\end{lem}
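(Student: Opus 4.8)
The plan is to approximate a given $u\in C^\infty_{c,0}(\bar\Omega,E)$ by the cut-offs $u_\ve:=\chi_\ve u$, where $\chi_\ve\in C^\infty(M)$ is chosen with $0\le\chi_\ve\le1$, $\chi_\ve=0$ on the $\ve$-neighborhood of $\partial\Omega$, $\chi_\ve=1$ off its $2\ve$-neighborhood, and $|d\chi_\ve|\le C/\ve$ with $C$ independent of $\ve$. Such functions exist for all small $\ve>0$: since $\partial\Omega$ is smooth and compact, the distance $r=d(\cdot,\partial\Omega)$ is smooth on a one-sided collar of $\partial\Omega$, so one may take $\chi_\ve=\phi_\ve\circ r$ there, with $\phi_\ve\colon[0,\infty)\to[0,1]$ smooth, $\phi_\ve\equiv0$ on $[0,\ve]$, $\phi_\ve\equiv1$ on $[2\ve,\infty)$, $|\phi_\ve'|\le2/\ve$, and $\chi_\ve\equiv1$ away from the collar. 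Since $\supp u\subseteq\bar\Omega$ is compact and $u_\ve$ vanishes near $\partial\Omega$, we have $u_\ve\in C^\infty_c(\Omega,E)$ once $\ve$ is small, so it suffices to show $\|u-u_\ve\|_L=\|w_\ve\|_L\to0$ as $\ve\to0$, where $w_\ve:=(1-\chi_\ve)u$. Recall that, as $L\ge0$ is assumed here, we may take $\beta=0$, whence $\|w_\ve\|_L^2=\|w_\ve\|_\mu^2+\la Lw_\ve,w_\ve\ra_\mu=\|w_\ve\|_\mu^2+\|Aw_\ve\|_\mu^2+\la Bw_\ve,w_\ve\ra_\mu$ by \eqref{diver2}, using that $\supp w_\ve$ is compact.

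The term $\|w_\ve\|_\mu^2$ tends to $0$ by dominated convergence, since $|w_\ve|\le|u|\in L^2(E,\mu)$ and $w_\ve\to0$ pointwise off the null set $\partial\Omega$. For the gradient term, as $A$ has order one we have $Aw_\ve=(1-\chi_\ve)Au-\sigma_A(d\chi_\ve)u$, using $[A,m_f]=\sigma_A(df)$ from \eqref{psymbol}; here $\|(1-\chi_\ve)Au\|_\mu\to0$ by dominated convergence as well. For the remaining summand --- and this is the one place where smoothness of $\partial\Omega$ together with the Dirichlet condition $u|_{\partial\Omega}=0$ enters --- a first-order Taylor estimate along geodesics normal to $\partial\Omega$ yields $|u|\le C_u\,r$ on the collar. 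Since $d\chi_\ve$ is supported in $\{r\le2\ve\}$ and $|\sigma_A(d\chi_\ve)|\le c_A|d\chi_\ve|$ there, we get $|\sigma_A(d\chi_\ve)u|\le(c_AC/\ve)(2C_u\ve)=2c_ACC_u$, a bound independent of $\ve$, while the $\mu$-measure of $\{r\le2\ve\}\cap\supp u$ tends to $0$ (dominated convergence, $\supp u$ being compact). Hence $\|\sigma_A(d\chi_\ve)u\|_\mu\to0$ and therefore $\|Aw_\ve\|_\mu\to0$.

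Finally I would estimate $|\la Bw_\ve,w_\ve\ra_\mu|\le\|Bw_\ve\|_\mu\|w_\ve\|_\mu$. Since $B$ has order at most one and $\supp u$ is a fixed compact set, $|Bw_\ve|\le C(|w_\ve|+|\nabla w_\ve|)$ there, and $\nabla w_\ve=(1-\chi_\ve)\nabla u-(d\chi_\ve)\otimes u$; the first term tends to $0$ in $L^2$ by dominated convergence and the second is bounded on a set of $\mu$-measure tending to $0$, by the same cancellation of $1/\ve$ against $r\le2\ve$ as above. Thus $\|Bw_\ve\|_\mu$ stays bounded (in fact tends to $0$), and since $\|w_\ve\|_\mu\to0$ this gives $\la Bw_\ve,w_\ve\ra_\mu\to0$. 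Combining the three estimates yields $\|w_\ve\|_L^2\to0$, so $u$ lies in the $\|\cdot\|_L$-closure $H_L(\Omega)$ of $C^\infty_c(\Omega,E)$, as claimed. I expect the only genuinely delicate point to be exactly these commutator terms $\sigma_A(d\chi_\ve)u$ and $(d\chi_\ve)\otimes u$: naively $|d\chi_\ve|\sim1/\ve$ blows up, and the argument works precisely because the Dirichlet condition forces $|u|=O(d(\cdot,\partial\Omega))$ near the smooth boundary, so that these products stay bounded while living on thin shells of vanishing volume.
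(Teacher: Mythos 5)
Your argument is correct and follows essentially the same route as the paper's proof: cut-offs $\chi_\ve$ built from the distance to the smooth boundary, the Dirichlet condition forcing $|u|=O(d(\cdot,\partial\Omega))$ to cancel the $1/\ve$ blow-up of $|d\chi_\ve|$, and vanishing measure of the thin collars. The only cosmetic difference is that you estimate the $B$-term via $\nabla w_\ve$ rather than via the commutator $\sigma_B(d\chi_\ve)u$ as the paper does, which amounts to the same thing.
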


\begin{proof}
Let $u\in C^\infty_{c,0}(\bar\Omega,E)$,
and let $C_A$ and $C_B$ be bounds for $|\sigma_A|$ and $|\sigma_B|$ on $\supp u$.
Let $\chi_n\in C^\infty_c(\bar\Omega)$ be a sequence of functions
with $0\le\chi_n\le1$, $\chi_n(x)=0$ if $d(x,\partial\Omega)\le1/n$,
$\chi_n(x)=1$ if $d(x,\partial\Omega)\ge2/n$, and $|d\chi_n|\le 2n$.

Since $u$ is smooth and vanishes along $\partial\Omega$,
there is a constant $C_1>0$ such that $|u(x)|\le C_1d(x,\partial\Omega)$.
Hence $|\sigma_A(d\chi_n)u|\le 4|\sigma_A|C_1\le4C_AC_1$ and therefore
\begin{align*}
  \|\sigma_A(d\chi_n)u\|_\mu^2
  = \int_{\Omega} |\sigma_A(d\chi_n)u|^2d\mu
  \le 16C_A^2C_1^2C_2/n,
\end{align*}
where $C_2>0$ is a constant such that
\begin{align*}
	|\{ x\in\Omega\mid 1/n\le d(x,\partial\Omega)\le2/n\}|_\mu\le C_2/n.
\end{align*}
Similarly,
\begin{align*}
  \|\sigma_B(d\chi_n)u\|_\mu^2
  = \int_{\Omega} |\sigma_B(d\chi_n)u|^2d\mu
  \le 16C_B^2C_1^2C_2/n.
\end{align*}
We have $\chi_nu\in C^\infty_c(\Omega,E)$ and get
\begin{align*}
    \la L((1-\chi_n)&u),(1-\chi_n)u \ra_\mu \\
    &\le \|A((1-\chi_n)u)\|_\mu^2 + \|B((1-\chi_n)u)\|_\mu\|(1-\chi_n)u\|_\mu \\
    &\le \|(1-\chi_n)Au-\sigma_A(d\chi_n)u\|_\mu^2 \\
    &\hspace{11mm} + \|(1-\chi_n)Bu-\sigma_B(d\chi_n)u\|_\mu\|(1-\chi_n)u\|_\mu \\
    &\le 2\|(1-\chi_n)Au\|_\mu^2 + 2\|\sigma_A(d\chi_n)u\|_\mu^2 \\
    &\hspace{5mm} + (\|(1-\chi_n)Bu\|_\mu + \|\sigma_B(d\chi_n)u\|_\mu)\|(1-\chi_n)u\|_\mu,
\end{align*}
where we use $\la.,.\ra_\mu$ and $\|.\|_\mu$ to indicate corresponding integrals against $d\mu$ over $\Omega$.
Now the right hand side tends to zero.
Therefore $u=\lim(\chi_nu)$ with respect to $\|.\|_L$,
and hence $u$ lies in the $\|.\|_L$-closure of $C^\infty_c(\Omega,E)$.
\end{proof}

To avoid confusion, we denote $L$ with domain $C^\infty_c(\Omega,E)$ by $L_c$
and $L$ with domain $C^\infty_{c,0}(\bar\Omega,E)$ by $L_{c,0}$.
Since $C^\infty_{c,0}(\bar\Omega,E)$ is contained in the domain of the adjoint $L_c^*$ of $L_c$ and $L_c^*=L_{c,0}$ on $C^\infty_{c,0}(\bar\Omega,E)$,
we get that $L_c\subseteq L_{c,0}$ and, using \cref{hl},
that the Friedrichs extension $\bar L_c$ contains $L_{c,0}$.
In particular, $L_{c,0}$ is non-negative on $C^\infty_{c,0}(\bar\Omega,E)$
and hence the Friedrichs extension $\bar L_{c,0}$ of $L_{c,0}$ is defined.

\begin{prop}\label{ll}
We have $\bar L_c=\bar L_{c,0}$.
\end{prop}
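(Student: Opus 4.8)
The plan is to show that $L_c$ and $L_{c,0}$ have the same form domain, and then to read off the equality of their Friedrichs extensions from the description of the latter as $H_L\cap D(L^*)$ in \cref{friex}, together with the fact that a self-adjoint operator admits no proper symmetric extension. The substantive work has already been carried out in \cref{hl}; what is left is the assembly of standard functional-analytic facts.

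\textbf{Step 1: the form domains coincide.} First I would record that $H_{L_{c,0}}(\bar\Omega)$, the $\|.\|_L$-closure of $C^\infty_{c,0}(\bar\Omega,E)$, equals $H_{L_c}(\Omega)$, the $\|.\|_L$-closure of $C^\infty_c(\Omega,E)$, as subspaces of $L^2(E,\mu)$. Indeed, $C^\infty_c(\Omega,E)\subseteq C^\infty_{c,0}(\bar\Omega,E)$ gives $H_{L_c}(\Omega)\subseteq H_{L_{c,0}}(\bar\Omega)$, while \cref{hl} gives the reverse inclusion $C^\infty_{c,0}(\bar\Omega,E)\subseteq H_{L_c}(\Omega)$ and hence $H_{L_{c,0}}(\bar\Omega)\subseteq H_{L_c}(\Omega)$.

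\textbf{Step 2: comparing the two Friedrichs extensions.} Since $L_c\subseteq L_{c,0}$ (as already observed before the proposition), passing to adjoints reverses the inclusion: $L_{c,0}^*\subseteq L_c^*$, so that $D(L_{c,0}^*)\subseteq D(L_c^*)$ and $L_c^*=L_{c,0}^*$ on $D(L_{c,0}^*)$. By the construction of the Friedrichs extension (cf.\ \cref{friex}) we have $D(\bar L_{c,0})=H_{L_{c,0}}(\bar\Omega)\cap D(L_{c,0}^*)$ with $\bar L_{c,0}=L_{c,0}^*$ there, and $D(\bar L_c)=H_{L_c}(\Omega)\cap D(L_c^*)$ with $\bar L_c=L_c^*$ there. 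Combining these with Step 1 yields $D(\bar L_{c,0})\subseteq D(\bar L_c)$ and $\bar L_{c,0}=\bar L_c$ on $D(\bar L_{c,0})$, i.e.\ $\bar L_{c,0}\subseteq\bar L_c$. As both operators are self-adjoint and a self-adjoint operator has no proper symmetric extension, this forces $\bar L_{c,0}=\bar L_c$.

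\textbf{Main obstacle.} There is no serious obstacle beyond \cref{hl} itself; the only point that deserves a word of care is that the description of the Friedrichs extension is being invoked for $L_{c,0}$, whose domain $C^\infty_{c,0}(\bar\Omega,E)$ is not the space of compactly supported smooth sections of a manifold. This is harmless: the Friedrichs construction and the identity $D(\bar L)=H_L\cap D(L^*)$ are valid for any densely defined, lower-bounded symmetric operator, and $L_{c,0}$ is of this type, being non-negative on $C^\infty_{c,0}(\bar\Omega,E)$ as already noted. One should also keep in mind that $H_{L_c}(\Omega)$ and $H_{L_{c,0}}(\bar\Omega)$ are being regarded concretely inside $H=L^2(E,\mu)$ --- legitimate since $L$ is bounded from below, so that the $\|.\|_L$-completion embeds into $H$ --- which is precisely what makes the statement ``same form domain'' in Step 1 meaningful.
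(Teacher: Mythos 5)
Your argument is correct and follows essentially the same route as the paper's proof: both invoke \cref{hl} to identify the two form domains with $H_L(\Omega)$, both use $L_c\subseteq L_{c,0}$ to reverse the inclusion of adjoints and thereby deduce $\bar L_{c,0}\subseteq\bar L_c$, and both conclude by the maximality of self-adjoint operators. Your write-up is just slightly more explicit about naming $H_{L_{c,0}}(\bar\Omega)$ separately and about the applicability of the Friedrichs construction to $L_{c,0}$.
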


\begin{proof}
By definition and \cref{hl}, the domains of $\bar L_c$ and $\bar L_{c,0}$ are
\begin{align*}
  H_L(\Omega)\cap D(L_c^*)
  \quad\text{and}\quad
  H_L(\Omega)\cap D(L_{c,0}^*)
\end{align*}
Since $L_c\subseteq L_{c,0}$, we have $L_{c,0}^*\subseteq L_c^*$.
Now $\bar L_c$ and $\bar L_{c,0}$ are self-adjoint, and hence they coincide.
\end{proof}

\begin{proof}[Proof of \cref{esstab}]
By \cref{weylmk}, it remains to prove that
\begin{align*}
	\sigma_{\ess}(L,M \setminus K) \subseteq \sigma_{\ess}(L,M).
\end{align*}
To this end, notice that $\Omega=M\setminus K$ is an open domain in $M$ with compact smooth boundary.
Hence, with $L_c$ and $L_{c,0}$ as above, \cref{ll} implies that
$\sigma_{\ess}(L_c,M\setminus K) = \sigma_{\ess}(L_{c,0},M\setminus\mathring{K})$.
From Weyl's criterion,
we know that, for any $\lambda\in\sigma_{\ess}(L,M\setminus \mathring{K})$,
there exists a Weyl sequence $(u_n)_{n\in\N}$ for $\bar L_{c,0}$ and $\lambda$.
	
Let $U\subset\subset V$ be relatively compact open neighborhoods of $K$ in $M$ with smooth boundary.
Consider $\chi\in C^{\infty}_{c}(M)$ with $\chi =1$ on $U$ and $\supp\chi\subseteq V$.
Since the Dirichlet boundary condition is elliptic, we have an estimate
\begin{align}
	\|u\|_{H^2(\Omega\cap U,E)} \le C( \|Lu\|_{L^2(\Omega\cap V,E)} + \|u\|_{L^2(\Omega\cap V,E)})
\end{align}
for all $u\in H^2(\Omega\cap V,E)$ which vanish on $\partial K$. Arguing as in the proof of \cref{weyl},
we now get that the sections $(1 - \chi)u_n$ form a Weyl sequence for $\bar L_{c,0}$ and $\lambda$.
Since the supports of the $(1-\chi)u_n$ are contained in $M\setminus U$,
they also form a Weyl sequence for $\bar L$ and $\lambda$.
\end{proof}

\begin{cor}[Decomposition principle]\label{decpri}
For $i=1,2$,
let $L_i$ be operators of generalized Laplace type on vector bundles $E_i$ over Riemannian manifolds $M_i$
which are formally self-adjoint with respect to weighted measures $\mu_i$ and are bounded from below.
Assume that, for some compact domains $K_i\subseteq M_i$ with smooth boundary,
there is an isometry $M_1\setminus K_1\to M_2\setminus K_2$
which preserves the weighted measures
and transforms $E_2$ to $E_1$ and $L_2$ to $L_1$ over these domains.
Then $\sigma_{\ess}(L_1,M_1)=\sigma_{\ess}(L_2,M_2)$.
\end{cor}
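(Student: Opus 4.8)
The plan is to obtain the corollary by combining the stability of the essential spectrum (\cref{esstab}) with the evident invariance of the essential spectrum of a Friedrichs extension under isometries that respect all the structure at hand.

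First I would dispose of the degenerate case. If $K_1=M_1$, then the existence of an isometry $M_1\setminus K_1\to M_2\setminus K_2$ forces $M_2\setminus K_2=\emptyset$ as well, so both $M_i$ are closed manifolds; in that case the Friedrichs extension of the elliptic operator $L_i$ has purely discrete spectrum, so $\sigma_{\ess}(L_1,M_1)=\emptyset=\sigma_{\ess}(L_2,M_2)$ and we are done. Hence I may assume $K_i\varsubsetneq M_i$ for $i=1,2$, which is precisely the hypothesis under which \cref{esstab} applies.

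Next, \cref{esstab}, applied to $(L_i,M_i)$ and the compact domain $K_i$ with smooth boundary, gives $\sigma_{\ess}(L_i,M_i)=\sigma_{\ess}(L_i,M_i\setminus K_i)$ for $i=1,2$. It therefore remains to see that $\sigma_{\ess}(L_1,M_1\setminus K_1)=\sigma_{\ess}(L_2,M_2\setminus K_2)$. For this I would note that the given isometry, together with the bundle isomorphism covering it, identifies the Riemannian manifold $M_1\setminus K_1$, the weighted measure $\mu_1$, the bundle $E_1$ with its metric and connection, and the operator $L_1$ over $M_1\setminus K_1$ with the corresponding data over $M_2\setminus K_2$. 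In particular it induces a unitary isomorphism between $L^2(E_1,\mu_1)$ and $L^2(E_2,\mu_2)$ over the respective domains which carries $C^\infty_c$ onto $C^\infty_c$ and intertwines $L_1$ with $L_2$ there. Hence it intertwines the inner products $\la\cdot,\cdot\ra_{L_i}$ from \eqref{h1}, the form spaces $H_{L_i}$, the adjoints $L_i^*$, and therefore the Friedrichs extensions $\bar L_i$ over $M_i\setminus K_i$ (unwinding \cref{friex}). A unitary equivalence of self-adjoint operators preserves the essential spectrum, so the two essential spectra over the complements agree, and chaining the three equalities yields $\sigma_{\ess}(L_1,M_1)=\sigma_{\ess}(L_2,M_2)$.

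The only slightly delicate point is the middle step, namely checking that the isometry genuinely induces a unitary equivalence of the two Friedrichs extensions, i.e.\ that it respects every ingredient entering the definition of $\bar L_i$ in \cref{friex}. This is pure bookkeeping with no analytic content; all the real work sits in \cref{esstab}, which is why the corollary is short.
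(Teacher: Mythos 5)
Your proposal is correct and follows exactly the argument the paper has in mind: the result is stated as an immediate corollary of \cref{esstab}, and your two-step chain (apply \cref{esstab} to each $(L_i,M_i,K_i)$, then transport $\sigma_{\ess}$ across the isometry of the complements) is the intended proof. The extra care you take with the degenerate case $K_i=M_i$ and with the bookkeeping of the unitary equivalence of Friedrichs extensions is sound but not something the paper dwells on.
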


\subsection{Essential self-adjointness}
\label{susesa}
The following discussion was motivated by \cite[Lecture 2]{Ba},
where scalar diffusion operators on Euclidean spaces are considered,
but our arguments and presentation changed with time.
Our main result, \cref{lesa4}, is known, at least in the scalar case;
compare e.g. with \cite{Ch} and references therein.
However, our discussion is quite elementary and short and might therefore be welcome.

We consider elliptic operators of generalized Laplace type as in \eqref{diver},
which are formally self-adjoint with respect to a weighted measure $\mu$
and assume that $L$ is bounded from below by $\beta\in\R$.

In what follows, we do not assume throughout that sections of $E$ are square-integrable,
and therefore we use explicit integrals instead of the shorter $L^2$-product notation
in corresponding places of our computations.

\begin{lem}\label{lesa}
If $u\in C^\infty(E)$ solves $Lu=\lambda u$ for some $\lambda\in\R$, then
\begin{align*}
	\int_M \la Au,A(f^2u)\ra d\mu
	\le \|A(fu)\|_\mu^2 + (\lambda-\beta)\|fu\|_\mu^2
\end{align*}
for any $f\in C^\infty_c(M)$.
\end{lem}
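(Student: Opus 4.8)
The plan is to expand the integrand $\la Au, A(f^2u)\ra$ using the product rule for the first-order operator $A$, namely $A(f^2u) = f^2 Au + \sigma_A(d(f^2))u = f^2 Au + 2f\,\sigma_A(df)u$, and then to recognize the quantity $f^2\la Au,Au\ra$ as coming from the self-adjointness identity \eqref{diver2}. Concretely, I would first integrate by parts (using that $A^{\ad}A + B = L$ and $Lu = \lambda u$) to get $\int_M \la f^2 Au, Au\ra d\mu = \int_M \la A^{\ad}(f^2 Au) , u\ra d\mu$, and then massage $A^{\ad}(f^2 Au)$ so that it relates to $f^2 A^{\ad}Au = f^2(\lambda - B)u$ up to a first-order commutator term involving $\sigma_A(d(f^2))$. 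The upshot should be an identity expressing $\int_M \la Au, A(f^2 u)\ra d\mu$ in terms of $\lambda\|fu\|_\mu^2$, $\la Bu, f^2 u\ra_\mu$, and the cross-term $\int_M \la Au, 2f\sigma_A(df)u\ra d\mu$.

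The second step is to complete the square: one has $\|A(fu)\|_\mu^2 = \|f Au + \sigma_A(df)u\|_\mu^2 = \|fAu\|_\mu^2 + 2\Re\int_M \la fAu, \sigma_A(df)u\ra d\mu + \|\sigma_A(df)u\|_\mu^2$, so the cross-term $2\Re\int_M\la fAu,\sigma_A(df)u\ra d\mu$ appearing (after taking real parts, using that $L$ and $\lambda$ are real and $u$ may be taken real, or by symmetrizing) equals $\|A(fu)\|_\mu^2 - \|fAu\|_\mu^2 - \|\sigma_A(df)u\|_\mu^2$. Substituting this back, the $\|fAu\|_\mu^2$ terms should cancel and one is left with $\int_M \la Au,A(f^2u)\ra d\mu \le \|A(fu)\|_\mu^2 + (\lambda-\beta)\|fu\|_\mu^2$ provided the remaining lower-order contributions, namely $\la Bu, f^2u\ra_\mu - \|\sigma_A(df)u\|_\mu^2$ together with whatever the commutator produced, are bounded above by $-\beta\|fu\|_\mu^2 + (\text{the }\lambda\text{ already accounted for})$. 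Here the hypothesis $L \ge \beta$ enters decisively: applying $\la L(fu), fu\ra_\mu \ge \beta\|fu\|_\mu^2$, i.e. $\|A(fu)\|_\mu^2 + \la B(fu), fu\ra_\mu \ge \beta\|fu\|_\mu^2$, lets one trade the symbol term $\|\sigma_A(df)u\|_\mu^2$ and the $B$-term against $\beta\|fu\|_\mu^2$ after noting that $B(fu)$ and $f^2 Bu$ differ by lower-order symbol terms of $B$ which also appear when expanding.

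I expect the main obstacle to be bookkeeping: carefully matching the first-order symbol terms $\sigma_A(df)u$ coming from the product rule for $A$ against the ones hidden in $A(fu)$ and in $\la B(fu),fu\ra_\mu$ versus $\la Bu, f^2u\ra_\mu$, and making sure all the cross terms have the correct sign so that the inequality (rather than an equality with error terms) comes out cleanly. Taking real parts throughout, and reducing to real-valued $f$ and (if $E$ is complex) treating $\Re\la\cdot,\cdot\ra$, is what makes the Cauchy–Schwarz-type completion of the square legitimate. None of the individual manipulations is deep — it is the classical Agmon-type integration-by-parts trick — but the presence of the general first-order $A$ and the potential term $B$ means one cannot simply quote the scalar Schrödinger computation and must track the symbols explicitly.
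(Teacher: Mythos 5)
Your plan is correct and rests on the same ingredients the paper uses: formal self-adjointness of $A^{\ad}A$ in the form \eqref{diver2}, the eigenvalue equation $Lu=\lambda u$, the skew-symmetry of $\sigma_B$, and the lower bound $L\ge\beta$ applied to $fu$. What you do differently is the bookkeeping: you expand $A(f^2u)$ by the product rule and then complete the square on $\|A(fu)\|_\mu^2$, which in effect derives the identity $\int_M\la Au,A(f^2u)\ra\,d\mu=\|A(fu)\|_\mu^2-\|\sigma_A(df)u\|_\mu^2$ inside this lemma. The paper keeps Lemma~\ref{lesa} cleaner by not expanding $A(f^2u)$ at all: one writes
\begin{align*}
 \int_M \la Au,A(f^2u)\ra\,d\mu
 = \int_M \la u,(L-B)(f^2u)\ra\,d\mu
 = \lambda\|fu\|_\mu^2 - \int_M\la fu,B(fu)\ra\,d\mu ,
\end{align*}
and then $L\ge\beta$ applied to $fu$ gives the inequality in one line; the completing-the-square identity is postponed to Lemma~\ref{lesa2}, where it combines with Lemma~\ref{lesa} to produce the estimate $(\beta-\lambda)\|fu\|_\mu^2\le\|\sigma_A(df)u\|_\mu^2$. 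So you are essentially merging the two lemmas into one computation — it works, but it is longer, and it risks obscuring the clean split the paper exploits. One point you should make explicit rather than hope for in the bookkeeping: the step $\int_M\la u,B(f^2u)\ra\,d\mu=\int_M\la fu,B(fu)\ra\,d\mu$ is not a cancellation of lucky signs but follows because $B$, being the formally self-adjoint part $L-A^{\ad}A$, has skew-symmetric (skew-Hermitian) symbol $\sigma_B(df)$, so the discrepancy $\int_M\la fu,\sigma_B(df)u\ra\,d\mu$ is purely imaginary and drops out of an equation between real quantities. Without naming that mechanism, the "trading against $\beta\|fu\|_\mu^2$" you describe at the end would remain a conjecture.
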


\begin{proof}
For $f\in C^\infty_c(M)$, we have $f u,f^2u\in C^\infty_c(E)$,
and hence integrations by parts below do not lead to boundary terms.
We have
\begin{align*}
	\int_M \la Au,A(f^2u)\ra d\mu
	&= \int_M \la u,(L-B)(f^2u)\ra d\mu \\
	&= \int_M \la Lu,f^2u\ra d\mu - \int_M \la u,B(f^2u)\ra d\mu \\
	&= \lambda \int_M \la fu,fu\ra d\mu - \int_M \la fu,B(fu)\ra d\mu \\
	&\le \|A(fu)\|_\mu^2 + (\lambda-\beta)\|fu\|_\mu^2,
\end{align*}
where we use, in the penultimate step, that $\sigma_B(df)$ is a field of skew-symmetric (resp.\ skew-Hermitian) endomorphisms of $E$
\end{proof}

\begin{lem}\label{lesa2}
If $u\in C^\infty(E)$ solves $Lu=\lambda u$ for some $\lambda\in\R$, then
\begin{align*}
	(\beta-\lambda)\|fu\|_\mu^2 \le \|\sigma_A(df)u\|_\mu^2
\end{align*}
for any $f\in C^\infty_c(M)$.
\end{lem}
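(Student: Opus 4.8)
The plan is to deduce the assertion from \cref{lesa} by expanding both sides of the inequality in \cref{lesa} by means of the Leibniz rule for the first-order operator $A$. By \eqref{psymbol} we have $\sigma_A(df)=[A,m_f]$, hence
\[
  A(fu)=fAu+\sigma_A(df)u
  \qquad\text{and}\qquad
  A(f^2u)=f^2Au+2f\,\sigma_A(df)u
\]
for $f\in C^\infty_c(M)$ and $u\in C^\infty(E)$, where the second identity also uses $d(f^2)=2f\,df$ and the linearity of the principal symbol in its covector argument.

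First I would establish the pointwise identity $|A(fu)|^2-\Re\la Au,A(f^2u)\ra=|\sigma_A(df)u|^2$. This follows by inserting the two formulas above: one gets $|A(fu)|^2=f^2|Au|^2+2f\,\Re\la Au,\sigma_A(df)u\ra+|\sigma_A(df)u|^2$ and $\Re\la Au,A(f^2u)\ra=f^2|Au|^2+2f\,\Re\la Au,\sigma_A(df)u\ra$, and subtracting cancels everything but $|\sigma_A(df)u|^2$. (In the real case all occurrences of $\Re$ may simply be dropped.) Since $f$ has compact support, this identity may be integrated against $d\mu$; noting that $\la Au,A(f^2u)\ra_\mu$ is real --- as is apparent from the proof of \cref{lesa}, where it is computed to equal $\lambda\|fu\|_\mu^2-\la fu,B(fu)\ra_\mu$ with $B$ formally self-adjoint --- one obtains
\[
  \|A(fu)\|_\mu^2-\la Au,A(f^2u)\ra_\mu=\|\sigma_A(df)u\|_\mu^2 .
\]

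Then I would combine this with \cref{lesa}, which for our solution $u$ of $Lu=\lambda u$ asserts that $\la Au,A(f^2u)\ra_\mu\le\|A(fu)\|_\mu^2+(\lambda-\beta)\|fu\|_\mu^2$. Substituting the previous display to eliminate $\la Au,A(f^2u)\ra_\mu$, the terms $\|A(fu)\|_\mu^2$ cancel and what remains is $0\le\|\sigma_A(df)u\|_\mu^2+(\lambda-\beta)\|fu\|_\mu^2$, i.e.\ the claimed inequality $(\beta-\lambda)\|fu\|_\mu^2\le\|\sigma_A(df)u\|_\mu^2$. I do not expect a genuine obstacle here: the argument is a few lines of algebra, and the only point deserving attention is that the cross terms $2f\,\Re\la Au,\sigma_A(df)u\ra$ appearing in the expansions of $|A(fu)|^2$ and of $\Re\la Au,A(f^2u)\ra$ are identical and hence cancel, which is precisely what makes the clean identity above hold.
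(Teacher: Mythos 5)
Your argument is essentially identical to the paper's: expand $A(fu)$ and $A(f^2u)$ via the Leibniz rule, derive the identity $\|A(fu)\|_\mu^2 - \la Au,A(f^2u)\ra_\mu = \|\sigma_A(df)u\|_\mu^2$, and combine with \cref{lesa}. The only difference is that you are somewhat more careful than the paper about the Hermitian case, inserting $\Re$ pointwise and noting that the integrated cross term is real; the paper states the pointwise identity without $\Re$, which is only literally correct in the real case, though the conclusion is unaffected after integration.
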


\begin{proof}
Since
\begin{align*}
	|A(fu)|^2 = |fAu+\sigma_A(df)u|^2 = \la Au,A(f^2u)\ra + |\sigma_A(df)u|^2,
\end{align*}
we obtain from \cref{lesa} that
\begin{align*}
	(\beta-\lambda)\|fu\|_\mu^2
	\le \|A(fu)\|_\mu^2 - \int_M \la Au,A(f^2u)\ra d\mu
	= \|\sigma_A(df)u\|_\mu^2,
\end{align*}
which is the asserted inequality.
\end{proof}


\begin{thm}\label{lesa4}
Suppose that $M$ is complete, that $L$ is bounded from below, and that $\int_0^\infty dr/s(r)=\infty$,
where $r$ denotes the distance to some fixed point in $M$ and $s(t)=\max_{\{r(x)\le t\}}|\sigma_A|_x$.
Then $L$ is essentially self-adjoint,
and its closure coincides with the Friedrichs extension $\bar L$ of $L$.
\end{thm}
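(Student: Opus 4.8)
The plan is to verify the hypothesis of \cref{friex}.\ref{friexes}: it suffices to exhibit one $\lambda<\beta$ for which $L^{*}-\lambda$ is injective. So I fix any $\lambda<\beta$ and take $u\in D(L^{*})$ with $L^{*}u=\lambda u$. Since $L$ is formally self-adjoint, $L^{*}$ acts as $L$ in the distributional sense, so ellipticity of $L$ and elliptic regularity give $u\in C^{\infty}(E)$; thus $u$ is a square-integrable smooth solution of $Lu=\lambda u$, and the whole point is to conclude $u\equiv0$. The engine is \cref{lesa2}, which gives
\[
  (\beta-\lambda)\|fu\|_{\mu}^{2}\le\|\sigma_{A}(df)u\|_{\mu}^{2}\le\int_{M}|\sigma_{A}|^{2}|df|^{2}|u|^{2}\,d\mu
\]
for $f\in C^{\infty}_{c}(M)$; since $\beta-\lambda>0$, it is enough to produce cut-offs $f_{n}$ with $f_{n}\to1$ for which the right-hand side tends to $0$.

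First I would note that the inequality of \cref{lesa2} persists for every compactly supported \emph{Lipschitz} function $f$: approximating such an $f$, supported in a compact set $C$, by $f_{k}\in C^{\infty}_{c}(M)$ with supports in a fixed neighbourhood $C'$ of $C$, uniformly bounded Lipschitz constants, and $df_{k}\to df$ in $L^{2}(C')$, one passes to the limit in \cref{lesa2} for the $f_{k}$ (both sides converge because $|\sigma_{A}|^{2}|u|^{2}$ is bounded on $C'$). Next, completeness makes closed balls compact, so $s(t)=\max_{\{r(x)\le t\}}|\sigma_{A}|_{x}$ is finite, continuous and non-decreasing, and it is positive since $L$ is elliptic of order two. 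Hence
\[
  \rho(r)=\int_{0}^{r}\frac{dt}{s(t)}
\]
is well defined, continuous and strictly increasing, with $\rho(r)\to\infty$ as $r\to\infty$ by hypothesis. Now pick $\phi_{n}\colon[0,\infty)\to[0,1]$ piecewise linear with $\phi_{n}\equiv1$ on $[0,n]$, $\phi_{n}\equiv0$ on $[2n,\infty)$ and $|\phi_{n}'|\le1/n$, and set $f_{n}=\phi_{n}\circ\rho\circ r$. Then $f_{n}$ is Lipschitz and $\supp f_{n}\subseteq\{r\le\rho^{-1}(2n)\}$ is compact; almost everywhere $|d(\rho\circ r)|=s(r)^{-1}$, so $df_{n}$ vanishes off the annulus $\{n<\rho(r)<2n\}$ and there $|df_{n}|\le n^{-1}s(r)^{-1}$, whence $|\sigma_{A}(df_{n})u|\le|\sigma_{A}|_{x}\,|df_{n}|\,|u|\le n^{-1}|u|$ almost everywhere. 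Therefore $\|\sigma_{A}(df_{n})u\|_{\mu}^{2}\le n^{-2}\|u\|_{\mu}^{2}\to0$, while $0\le f_{n}\le1$ and $f_{n}\to1$ pointwise give $\|f_{n}u\|_{\mu}\to\|u\|_{\mu}$ by dominated convergence. Feeding $f_{n}$ into the Lipschitz form of \cref{lesa2} and letting $n\to\infty$ yields $(\beta-\lambda)\|u\|_{\mu}^{2}\le0$, so $u=0$; hence $L^{*}-\lambda$ is injective, and \cref{friex}.\ref{friexes} delivers the essential self-adjointness of $L$ together with the identification of its closure with $\bar L$.

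The step I expect to require the most care is precisely the passage in \cref{lesa2} from smooth to compactly supported Lipschitz test functions, since the natural cut-offs built from the distance function are only Lipschitz; the approximation must be carried out so that the Lipschitz constants --- and with them the crucial bound $|\sigma_{A}(df_{n})u|\le n^{-1}|u|$ --- survive in the limit. (Alternatively one could replace the distance function $r$ throughout by a smooth proper function with gradient bounded by $2$, a change affecting only numerical constants.) Everything else is routine: elliptic regularity for $u$, compactness of balls from completeness, and the two elementary convergences above.
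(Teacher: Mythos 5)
Your proof is correct, and it takes a genuinely different route from the paper. The paper argues in two steps: it first treats the case of bounded $\sigma_A$ using smooth cut-offs $\vf_n$ with $\|d\vf_n\|_\infty\to 0$ (available on any complete manifold), and then reduces the general case to this one by a conformal change of metric $g'=f^{-2}g$ with $s\circ r<f<s\circ r+1$, borrowed from B\"ar--Ballmann; the hypothesis $\int_0^\infty dr/s(r)=\infty$ enters only through the completeness of $g'$, and one must check that the orthogonal conjugation transports $L$ to an operator $L'$ of the same type with $|\sigma_{A}|'\le1$. You instead build the cut-offs $f_n=\phi_n\circ\rho\circ r$ directly from the reparametrized radial coordinate $\rho(r)=\int_0^r dt/s(t)$, so that the factor $s(r)^{-1}$ in $|d(\rho\circ r)|$ exactly cancels the growth of $|\sigma_A|_x\le s(r(x))$, giving $|\sigma_A(df_n)u|\le n^{-1}|u|$ almost everywhere; the hypothesis $\int_0^\infty dr/s(r)=\infty$ enters through $\rho(r)\to\infty$, which makes the sets $\{f_n=1\}$ exhaust $M$. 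Your route is more self-contained (no auxiliary metric, no transported operator $L'$ to inspect), at the modest cost of having to justify \cref{lesa2} for compactly supported Lipschitz test functions; the approximation you sketch does this correctly, since $|\sigma_A|^2|u|^2$ is bounded on a fixed compact neighbourhood of $\supp f$ and $df_k\to df$ in $L^2$. Two small remarks: continuity of $s$ does require a word (it follows from uniform continuity of $|\sigma_A|$ on closed balls, which are compact by completeness), and positivity of $s$ follows from ellipticity of $L$, since the principal symbol of $L$ is $\sigma_A(\xi)^{\ad}\sigma_A(\xi)$, which is invertible only if $\sigma_A(\xi)$ is injective. With those in place, your argument is complete and, for readers who prefer to avoid the conformal rescaling device, arguably more transparent.
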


\begin{proof}
We consider the case where $\sigma_A$ is bounded first.
Let $\beta$ be a lower bound for $L$, and choose $\lambda<\beta$.
Suppose that there is a $u\in D(L^*)$ with $L^*u=\lambda u$.
Then $Lu=\lambda u$ weakly, by the symmetry of $L$.
Since $L$ is elliptic, this implies that $u\in C^\infty(E)$ with $Lu=\lambda u$.
 
Since $M$ is complete,
there is a sequence $0\le\vf_1\le\vf_2\le\dots\le1$ in $C^\infty_c(M)$ such that
\begin{enumerate}
\item\label{fn1}
$\{\vf_1=1\}\subseteq\{\vf_2=1\}\subseteq\dots$ exhausts $M$;
\item\label{fn2}
$\|d\vf_n\|_\infty\le1/n$.
\end{enumerate}
Then $\|\vf_nu\|_\mu\to\|u\|_\mu$ as $n\to\infty$.
Since $\|\sigma_A\|_\infty<\infty$ and $\|d\vf_n\|_\infty\to0$ as $n\to\infty$, 
\cref{lesa2} together with $\beta-\lambda>0$ implies that $u=0$.

We now use the trick from \cite[page 21]{BB} to reduce the more general case
to the case where $\sigma_A$ is bounded.
We choose a smooth function $f\colon M\to\R$ such that
\begin{align*}
  s(r(x)) < f(x) < s(r(x))+1.
\end{align*}
Then $M$ with the Riemannian metric $g'=f^{-2}g$ is also complete,
where $g$ denotes the original metric of $M$; see loc.\,cit.
The new volume element is $\mu'=f^{-m}\mu$, and the isomorphism
\begin{align*}
  L^2(E,\mu') \to L^2(E,\mu), \quad u\mapsto f^{-m/2}u,
\end{align*}
is orthogonal and transforms $L$ into an operator $L'=A^{\ad'}A+B'$ of generalized Laplace type,
where $A^{\ad'}$ denotes the formal adjoint of $A$ with respect to $\mu'$
and $B'$ is of order at most one.
With respect to $g'$, we have $|\alpha|'=f(x)|\alpha|$, where $\alpha\in T_x^*M$.
Therefore 
\begin{align*}
  |\sigma_A|'_x
  = \sup_{0\ne\alpha\in T_x^*M}\frac{|\sigma_A(\alpha)|}{|\alpha|'}
  = \frac1{f(x)} |\sigma_A|_x
  \le\frac{s(x)}{f(x)} \le 1,
\end{align*}
and hence $|\sigma_A|'_\infty\le1$.
We conclude from the first part of the proof that $L'$ is essentially self-adjoint,
and hence its transform $L$ is also essentially self-adjoint.
\end{proof}

Bochner-Laplacians, Hodge-Laplacians, and squares of Dirac operators on Dirac bundles
are of generalized Laplace type, non-negative, and with parallel, hence bounded $\sigma_A$. 
Therefore \cref{lesa4} applies to them.


\end{document}